\def\d{\delta}
\def\H{\mathcal{H}}
\def\C{\mathbb{C}}
\def\c2{\mathbb{C}^2}
\def\R{\mathbb{R}}
\def\Q{\mathbb{Q}}
\def\N{\mathbb{N}}
\def\P{\mathbb{P}}
\def\E{\mathcal{E}}
\def\1{\mathbf{1•}}
\def\a{\alpha}
\def\e{\varepsilon}
\def\f{\varphi}
\def\p{\psi}
\def\om{\omega}
 \newcommand{\MA}{\mathrm{MA}}
\newcommand{\Amp}{\mathrm{Amp}\,}
\newtheorem{lem}{Lemma}[section]
\newtheorem{prop}[lem]{Proposition}
\newtheorem{defi}[lem]{Definition}
\newtheorem{def/not}[lem]{Definition/Notations}
\newtheorem{thm}[lem]{Theorem}
\newtheorem{cor}[lem]{Corollary}
\newtheorem{rem}[lem]{Remark}
\newtheorem*{ackn}{Acknowledgements}
\newcommand{\Real}{\mathbb{R}}
\newcommand{\HH}{\mathcal{H}}
\newcommand{\Cinf}{C^{\infty}}
\newcommand{\set}[1]{\left\{#1\right\}}
\DeclareMathOperator{\Ric}{\mathrm{Ric}} %
\newcommand{\psh}{{\rm PSH}}
\begin{document}

\title[The  metric space of K\"{a}hler currents]{Geometry and topology of the space of K\"{a}hler metrics on singular varieties}

\author{Eleonora Di Nezza}
\email{dinezza@ihes.fr}
\address{Department of Mathematics, Imperial College London, London SW7 2AZ, UK}
\curraddr{IHES, Universit\'e Paris Saclay, 91400 Bures sur Yvette, France}
\author{Vincent Guedj}
\email{vincent.guedj@math.univ-toulouse.fr}
\address{Institut de Math\'ematiques de Toulouse
Universit\'e de Toulouse ; CNRS
UPS, F-31062 Toulouse Cedex 9, France}

\thanks{ The first author is supported by a Marie Curie fellowship 660940- KRF-CY. The second author is partially supported by the ANR project GRACK}

\begin{abstract}
Let $Y$ be a compact K\"ahler normal space and
$\a \in H^{1,1}_{BC}(Y)$ a K\"ahler class.
We study metric properties of the space 
$\HH_\a$ of K\"ahler metrics in $\a$ using Mabuchi geodesics.
We extend several results by Calabi, Chen and Darvas 
previously established when the underlying space is smooth.
As an application we  analytically characterize the existence of K\"ahler-Einstein
metrics on $\Q$-Fano varieties, generalizing a result of Tian,
and illustrate these concepts in the case of toric varieties.
\end{abstract}

\maketitle


\section*{Introduction}
\label{sec:intro}
 
 Let $Y$ be a compact K\"ahler manifold and $\a_Y \in H^{1,1}(Y, \R)$ a K\"ahler class. The space $\HH_{\a_Y}$ of K\"ahler metrics $\omega_Y$ in $\a_Y$ can be seen as an infinite dimensional riemannian manifold
whose tangent spaces $T_{\omega_Y} \H_{\a_Y}$ can all be identified with ${\mathcal C}^{\infty}(Y,\R)$.
Mabuchi has introduced in \cite{Mab87} an $L^2$-metric on $\H_{\a_Y}$, by setting
$$
\langle f, g \rangle_{\omega_Y}:=\int_Y f \, g \, \frac{{\omega_Y}^n}{V_{\a_Y}},
$$
where $n=\dim_\C Y$ and $V_{\a_Y}=\int_Y {\omega_Y}^n=\a_Y^n$ denotes the volume of $\a_Y$.

Mabuchi  studied the corresponding geometry of $\H_{\a_Y}$, showing in particular that it
can formally be seen as a locally symmetric space of non positive curvature. Semmes \cite{Sem92} re-interpreted the geodesic equation as a complex homogeneous equation, while Donaldson \cite{Don99}
strongly motivated the search for smooth geodesics through its connection with the uniqueness of constant scalar curvature K\"ahler metrics.

In a series of remarkable works \cite{Chen00,CC02,CT08,Chen09,CS09} X.X.Chen
and his collaborators have studied the metric and geometric pro\-perties of the space $\H_{\a_Y}$ showing in particular that it is a path metric space (a non trivial assertion in this infinite dimensional setting).
A key step from \cite{Chen00} has been to produce 
${\mathcal C}^{1,\overline{1}}$-geodesics which turn out to minimize the intrinsic  distance $d$.  Very recently such a regularity result was improved by Chu-Tosatti-Weinkove \cite{CTW17}: they showed that geodesics are $C^{1,1}$. It follows from the work of Lempert-Vivas \cite{LV11}, Darvas-Lempert \cite{DL12} and Ross-Witt-Nystr\"om \cite{RWN15}
 that one can not expect better regularity, but for the toric setting.

The metric study of the space $(\H_{\a_Y},d)$ has been recently pushed further by Darvas in 
 \cite{Dar13,Dar14,Dar15}.
He characterized there the metric completion of $(\H_{\a_Y},d)$ and showed that such a completion is non-positively curved in the sense of Alexandrov. He also introduced several Finsler type metrics on $\H_{\a_Y}$, 
which turn out to be quite useful (see  \cite{DR15,BBJ15}).
For each $p \geq 1$, we set
\begin{equation}\label{distance smooth}
d_p(\phi_0,\phi_1):=\inf \{ \ell_p(\phi) \, | \, \phi
\text{ is a path  joining } \phi_0 \text{ to } \phi_1 \},\; \; \forall \phi_0, \phi_1\in \H_{\omega_Y}
\end{equation}
where 
$$
\ell_p(\phi):=\int_0^1 |\dot{\phi_t}|_p dt=\int_0^1 \left( \int_Y \left| \dot{\phi}_t \right|^p MA({\phi_t}) \right)^{1/p} dt,
$$
and $MA({\phi_t}):= (\omega_Y+dd^c \phi_t)^n /{V_{\alpha_Y}}.$ The goal of this article is to extend these studies to the case when the underlying space has singularities. 

From now on, let $Y$ be a compact K\"ahler normal space and $\a_Y \in H^{1,1}_{BC}(Y)$ a K\"ahler class, where $H^{1,1}_{BC}(Y)$ denotes the Bott-Chern cohomology space.
We fix a base point $\omega_Y$ representing $\a_Y$ and work with the space of
K\"ahler potentials $$\H_{\omega_Y}:=\left\{ \phi \in C^\infty (Y, \R)\, : \, \omega_Y+dd^c \phi \; {\textrm{is a K\"ahler form}} \right\}.$$
Our first main result extends  the main results of \cite{Chen00} and \cite[Theorem 1]{Dar15} 
as follows:

 \medskip

\noindent {\bf Theorem A.} 
{\it 
\text{ }
\begin{itemize}
\item $(\HH_{\omega_Y},d_p)$ is a metric space;
\item 
$
d_p(\phi_0,\phi_1)=\left(\int_Y |\dot{\phi}_0|^p MA({\phi_0}) \right)^{1/p}=\left(\int_Y |\dot{\phi}_1|^p MA({\phi_1}) \right)^{1/p}, \quad \;\forall \phi_0, \phi_1\in \H_{\omega_Y}.
$
\end{itemize}
}
 \medskip
 As we are going to discuss in Remark \ref{rem:lenght_semipositive}, the singularities of $Y$ prevent us from defining the distance $d_p$ as in \eqref{distance smooth}. We instead work on a resolution of $Y$ and there we define $d_p$ as a limit of path length metrics. We refer to Denition \ref{distance_semipos} and Remark \ref{def d singular space} for the precise definition of $d_p$.

 \medskip
 Following \cite{Dar14,Dar15} we then study the metric completion  of the space $(\H_{\a_Y},d_p)$
  and establish the following generalization of \cite[Theorem 2]{Dar15}:
  
 \medskip

\noindent {\bf Theorem B.}
{\it Let $Y$ be a projective normal variety and assume $\omega_Y$ is a Hodge form.
The metric completion of $(\H_{\omega_Y}, d_p)$ is a geodesic metric space which 
can be identified with the finite energy class $({\mathcal E}^p(Y,\omega_Y),I_p)$.
}
 \medskip

\noindent Finite energy classes have been introduced in \cite{GZ07} and further studied in \cite{BEGZ10,BBGZ}, we recall their definition in Section \ref{sec:energy}. The Mabuchi geodesics can be extended
 to finite energy geodesics which are still metric geodesics. A key technical tool
 here is Theorem \ref{thm:equiv} which compares $d_p$ and $I_p$, where
 $$ I_p(\phi_0,\phi_1):=\left( \int_Y |\phi_0-\phi_1|^p \left[ \frac{MA(\phi_0)+MA(\phi_1)}{2} \right] \right)^{1/p} $$
 This is a natural quantity
 which allows one to define the "strong topology" on ${\mathcal E}^p(Y,\omega_Y)$ 
 
 \smallskip
The metric completion  of $(\H_{\a_Y},d)$ has been considered by Streets in his  
study of the Calabi flow \cite{Str13} and also plays an important role in 
recent  works by 
Berman-Boucksom-Jonsson \cite{BBJ15}
and Berman-Darvas-Lu \cite{BDL16}.
There is no doubt that the extension to the singular setting will play a leading role in subsequent applications.
 We illustrate this here by generalizing Tian's analytic criterion \cite{Tian97,PSSW08},
 using  results of \cite{BBEGZ} and an idea of \cite{DR15}:
 
  \medskip

\noindent {\bf Theorem C.}
{\it
Let $(Y,D)$ be a log Fano pair. It admits a unique K\"ahler-Einstein metric iff 
there exists $\e,M>0$ such that for all $\phi \in \H_{norm}$,
$$
{\mathcal F}(\phi) \leq -\e d_1(0,\phi)+M.
$$
}
 
Here ${\mathcal F}$ is a functional whose critical points are K\"ahler-Einstein potentials (Section \ref{sec:csck})
and $\H_{norm}$ is the set of potentials in $\H_{\omega_Y}$ normalized such that the supremum is $0$. This result has been independently obtained by T.Darvas \cite{Dar16} by a  different approach.
   
Our results should also be useful in analyzing more generally cscK metrics on midly singular varieties (see e.g. the recent construction by Arezzo and Spotti of cscK metrics on crepant resolutions of Calabi-Yau varieties with non-orbifold singularities \cite{AS15}).   

A way to establish the above results is to consider a resolution of singularities 
$\pi:X \rightarrow Y$ and to work with the space $\H_{\omega}$ of potentials associated to the
form $\omega=\pi^* \omega_Y$. All the above results actually hold in the more general setting 
when $\omega$ is merely a semi-positive and big form (i.e. $\int_X \omega^n>0$).
We approximate $   \H_{\omega}$ by spaces of K\"ahler potentials 
$\H_{\omega+\e \omega_X}$ and show that the most important metric properties of
$(\H_{\omega+\e \omega_X},d_\e)$ pass to the limit.
   
   \smallskip

The organization of the paper is as follows. {\it Section \ref{sec:kahler_metrics}}
starts by a recap of  Mabuchi geodesics and metrics.
{\it Theorem A} is proved  in {\it Section \ref{sec:regularity}}, where we develop
a low-regularity approach for understanding geodesics by approximation.
We introduce in {\it Section \ref{sec:energy}} classes of finite energy currents and compare
their natural topologies with the one induced by the Mabuchi distances in {\it Section \ref{sec:comparison}}.
 We study finite energy geodesics in {\it Section \ref{sec:geod}} and prove {\it Theorem B.}
We finally prove {\it Theorem C} in {\it Section \ref{sec:csck}}.

\section{The space of K\"{a}hler currents}
\label{sec:kahler_metrics}

Let $(Y,\omega_Y)$ be a {compact} K\"ahler normal space of dimension $n$. 
It follows from the definition of $H^{1,1}_{BC}(Y)$ (see for example \cite[Definition 4.6.2]{BEG}) that any
 other K\"{a}hler metric on $Y$ {in the same Bott-Chern cohomology class} of $\omega_Y$ can be written as
\begin{equation*}
    \omega_{\phi} = \omega_Y + dd^c \phi ,
\end{equation*}
where $d=\partial+\overline{\partial}$ and $d^c=\frac{1}{2i\pi} (\partial -\overline{\partial})$.
Let $\HH_{\omega_Y}$ be the space of \emph{K\"{a}hler potentials}
\begin{equation*}
    \HH_{\omega_Y} = \set{\phi \in \Cinf(Y,\R);	\; \omega_{\phi} = \omega + dd^c  \phi >0}.
\end{equation*}
This is a convex open subset of the Fr\'{e}chet vector space $\Cinf(Y):=\Cinf(Y,\R)$, thus itself a Fr\'{e}chet manifold, which is moreover parallelizable : 
$$
T\HH_{\omega_Y} = \HH_{\omega_Y} \times \Cinf(Y). 
$$
For any $\phi\in \HH_{\omega_Y}$, each tangent space $T_\phi\HH_{\omega_Y}$ is identified with $\Cinf(Y)$.

As two K\"{a}hler potentials define the same metric when (and only when) they differ by an additive constant,
we set
\begin{equation*}
    \HH_{\a_Y} = \HH_{\omega_Y} /\Real
\end{equation*}
where $\Real$ acts on $\HH_{\omega_Y}$ by addition. The set $\HH_{\a_Y}$ is therefore the {space of K\"{a}hler metrics on $Y$ in 
the cohomology class $\a_Y:=\{\omega_Y\} \in H^{1,1}_{BC}(Y)$}.

 In the whole article we fix $\pi:X \rightarrow Y$ a resolution of singularities and set $\omega=\pi^*\omega_Y$,
 $\a=\pi^*\a_Y$. Since $\a$ is no longer K\"ahler, we fix $\omega_X$ a K\"ahler form on $X$ and set
 $$
 \omega_\e:=\omega+\e \omega_X,
 $$
for $\e>0$. We will study the geometry and the topology of the spaces 
$$
\HH_{\a}=\pi^*  \HH_{\a_Y}
\; \text{ and } \; 
\HH_{\omega}=\pi^*\HH_{\omega_Y}
$$
by approximating them by the spaces $\HH_{\a_\e}, \HH_{\omega_\e}$, where
$$
\HH_{\omega_\e}:=\set{\f \in \Cinf(X,\R) \: ; \omega_\e + dd^c  \f >0}
\; \text{ and } \; 
\alpha_\varepsilon:=\{\omega_\varepsilon\}.
$$

All the properties that we are going to establish
actually hold for cohomology classes $\a$ that are merely {\it semi-positive} and {\it big} (not necessarily
the pull-back of a K\"ahler class under a desingularization).

 Our analysis will focus on the ample locus of $\a$:

\begin{defi}
The ample locus $\Amp(\a)$ of $\a$ is the Zarisiki open set of those points $x \in X$ such that
$\a$ can be represented by a positive closed $(1,1)$-current which is a smooth positive form near $x$.
\end{defi}
We then let $\mathcal{H}_\omega$ denote the space of potentials $\f\in C^\infty(X, \R)$ such that $\omega_\varphi$ is a K\"ahler form in $\Amp(\a)$. In our main case of interest, i.e. when $\a=\pi^* \a_Y$ for some K\"ahler class $\alpha_Y$ on a normal space $Y$, the ample locus 
$$
\Amp(\a)=\pi^{-1}(Y^{reg})
$$ 
is the preimage of the set of regular points of $Y$.

\subsection{The Riemannian structure}

\subsubsection{Mabuchi geodesics}\label{subsec: Mabuchi}

\begin{defi} \cite{Mab87}
The \emph{Mabuchi metric}  is the $L^{2}$ Riemannian metric on $\HH_\omega$. It is defined by
\begin{equation*}
    <\psi_{1},\psi_{2}>_{\f} = \int_{X} \psi_{1}\psi_{2}\, \frac{(\omega+dd^c \f)^n}{V_\a}
\end{equation*}
where $\f \in \HH_\omega$, $\psi_{1},\psi_{2} \in \Cinf(X)$ 
and ${(\omega+dd^c \f)^n}/{V_\a}$ is the volume element, normalized so that
it is a probability measure. Here
$
V_\a:=\a^n=\int_X \omega^n.
$
\end{defi}

In the sequel we shall also use the notation $\omega_\f:=\omega+dd^c \f$ and 
$$
MA(\f):=V_\a^{-1} {\omega_\f^n}.
$$
Geodesics between two points $\f_{0}$, $\f_{1}$ in $\HH_\omega$ correspond to the extremals of the
{Energy functional}
\begin{equation*}
 \f \mapsto  H(\f)=\frac{1}{2} \int_{0}^{1}\int_{X}(\dot{\f_{t}})^{2}\, MA({\f_{t}}) \, dt.
\end{equation*}
where $\f = \f_{t}$ is a smooth path in $\HH_\omega$ joining $\f_{0}$ and $\f_{1}$. 
The geodesic equation is formally obtained by computing the Euler-Lagrange equation for this Energy functional (with fixed end points).
It is given by 
\begin{equation}
\label{equ:geodesic_equation}
   \ddot{\f} \, MA(\f)=\frac{n}{V_\a} d \dot{\f} \wedge d^c \dot{\f}  \wedge \omega_\f^{n-1}.
\end{equation}

We are interested in the  {boundary value problem} for the geodesic equation: 
given $\f_{0},\f_{1}$ two distinct points in $\HH_\omega$, 
can one find a path $(\f(t))_{0 \leq t \leq 1}$  in $\HH_\omega$ which is a solution of  ~\eqref{equ:geodesic_equation} with 
end points $\f(0) = \f_{0}$ and $\f(1) = \f_{1}$ ?

For each path $(\f_{t})_{t \in [0,1]}$ in $\HH_\omega$, we set
\begin{equation*}
    \f\left(x, t+is \right) = \f_{t}(x), \qquad x \in X, \quad t+is \in S=\{ z\in \C \;: \;  0< \Re(z)< 1\};
\end{equation*}
i.e. we associate to each path $(\f_{t})$ a 
function  $\f$ on the complex manifold $M=X\times S$, which only depends  on the real part of the strip coordinate:
we consider  $S$ as a Riemann surface with boundary 
and use the complex coordinate $z = t+is$ to parametrize the strip $S$. Set $\omega(x,z):=\omega(x)$.

Semmes observed in \cite{Sem92} 
that the path $\f_{t}$ is a geodesic in $\HH_\omega$ if and only if the associated function $\f$ on $X\times S$ is 
a $\omega$-psh solution of the homogeneous complex Monge-Amp\`{e}re equation 
\begin{equation} \label{Semmes}
(\omega+dd^c_{x,z} \f)^{n+1} = 0.
\end{equation}

This motivates the following:

\begin{defi}\label{defi: geodesic}
The function
$$
\f =\sup \{ u \, ; \,  u \in PSH(M,\omega) \text{ and } u \leq \f_{0,1} \text{ on } \partial M\}
$$
is the Mabuchi geodesic joining $\f_0$ to $\f_1$.
\end{defi}
 
Here $PSH(M,\omega)$ denotes the set of $\omega$-psh functions on $M$: these are functions $u: M\rightarrow \R\cap \{-\infty\}$ which are locally the sum of a plurisubharmonic and a smooth function and such that $\omega+dd_{x,z}^c u \geq 0 $ in the sense of currents (see section \ref{qpsh} for more details).

\begin{prop} \label{prop:Semmes}
Let $(\f_t)_{0 \leq t \leq 1}$ be the Mabuchi geodesic joining $\f_0$ to $\f_1$.
Then

(i) $\f \in PSH(M,\omega)$ is uniformly bounded on $M$ and continuous on $\Amp(\{\omega\}) \times \bar{S}$.

\smallskip

(ii) $|\f(x,z)-\f(x,z') | \leq A |\Re(z)-\Re(z') |$ with $A=\| \f_0-\f_1 \|_{L^{\infty}(X)}$.

\smallskip

(iii) $\f_{| \{\Re (z)=0 \}}=\f_0$, $\f_{| \{\Re (z)=1 \}}=\f_1$ and $(\omega+dd^c_{x,z} \f)^{n+1} = 0$.

\smallskip

\noindent It is moreover the unique bounded $\omega$-psh solution to this Dirichlet problem.
\end{prop}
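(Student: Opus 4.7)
The plan is to realize $\varphi$ as the decreasing limit, as $\varepsilon\searrow 0$, of the classical Mabuchi geodesics $\varphi^\varepsilon$ associated with the K\"ahler forms $\omega_\varepsilon:=\omega+\varepsilon\omega_X$, and to transfer $\varepsilon$-uniform bounds to the limit. For each $\varepsilon>0$, both $\varphi_0,\varphi_1\in\HH_\omega$ lie in $\HH_{\omega_\varepsilon}$, since $\omega_\varepsilon+dd^c\varphi_j=\omega_{\varphi_j}+\varepsilon\omega_X>0$; by the theorems of Chen, Blocki and Chu--Tosatti--Weinkove there is a unique $C^{1,\bar 1}$ Mabuchi geodesic $\varphi^\varepsilon$ joining them, given by the Perron envelope $\sup\{u\in PSH(M,\omega_\varepsilon)\,:\,u\le\varphi_{0,1}\text{ on }\partial M\}$, continuous on $X\times\bar S$, satisfying $(\omega_\varepsilon+dd^c_{x,z}\varphi^\varepsilon)^{n+1}=0$ on $M$.

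I next extract $\varepsilon$-uniform estimates. Setting $A:=\|\varphi_0-\varphi_1\|_{L^\infty(X)}$, the function $(x,z)\mapsto\max(\varphi_0(x),\varphi_1(x))$ is an $\omega$-psh supersolution on $M$, while $(x,z)\mapsto\max(\varphi_0(x)-At,\,\varphi_1(x)-A(1-t))$ is an $\omega$-psh subsolution: the $t$-linear term is pluriharmonic in $(x,z)$, so $\omega+dd^c_{x,z}[\varphi_j-A|t-j|]=\omega_{\varphi_j}\ge 0$, and by the choice of $A$ this subsolution matches $\varphi_{0,1}$ on $\partial M$. Both are $\omega_\varepsilon$-psh, so the envelope definition combined with the maximum principle on the $s$-invariant domain produces $L^\infty$ and $A$-Lipschitz-in-$t$ bounds on $\varphi^\varepsilon$ independent of $\varepsilon$ (the Lipschitz constant can also be read off from the convexity of $t\mapsto\varphi^\varepsilon_t(x)$, viewed as an $s$-invariant subharmonic function, together with the two-sided boundary barriers). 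Monotonicity $\varphi^{\varepsilon'}\le\varphi^\varepsilon$ for $\varepsilon'<\varepsilon$ is immediate from $PSH(M,\omega_{\varepsilon'})\subset PSH(M,\omega_\varepsilon)$, so $\varphi^\varepsilon$ decreases as $\varepsilon\searrow 0$ to some $\tilde\varphi$ that is $\omega$-psh, bounded, and $A$-Lipschitz in $t$. Any $u\in PSH(M,\omega)$ admissible in the envelope of Definition \ref{defi: geodesic} is admissible for every $\omega_\varepsilon$, so $u\le\varphi^\varepsilon$ for all $\varepsilon$ and hence $u\le\tilde\varphi$, which forces $\tilde\varphi=\varphi$. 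Together with the subsolution matching boundary data, this yields the boundedness in (i), the estimate (ii), and the boundary conditions in (iii).

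It remains to establish continuity on $\Amp(\{\omega\})\times\bar S$, the homogeneous Monge--Amp\`ere equation, and uniqueness. Upper semicontinuity of $\varphi$ is automatic as a decreasing limit of continuous functions; for lower semicontinuity on $\Amp(\{\omega\})\times\bar S$ one constructs, near any point of that open set, continuous $\omega$-psh local barriers exploiting that $\omega$ is a genuine K\"ahler form on $\Amp(\{\omega\})$, which pinches $\varphi$ between matching continuous upper and lower bounds. The equation $(\omega+dd^c_{x,z}\varphi)^{n+1}=0$ on $\Amp(\{\omega\})\times S$ follows from continuity of the Bedford--Taylor Monge--Amp\`ere operator along decreasing sequences of locally bounded psh functions on the K\"ahler locus, applied to $\varphi^\varepsilon\searrow\varphi$. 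Uniqueness is the standard domination principle: any bounded $\omega$-psh solution $\psi$ of the Dirichlet problem satisfies $\psi\le\varphi$ by the Perron characterization, while the reverse inequality follows from $\int_{\{\psi<\varphi\}}(\omega+dd^c_{x,z}\varphi)^{n+1}=0$. The main difficulty is precisely the continuity on $\Amp(\{\omega\})\times\bar S$, since $\omega$ degenerates along $X\setminus\Amp(\{\omega\})$ and the smooth Dirichlet theory does not globalize; one has to localize the barrier construction carefully to the ample locus.
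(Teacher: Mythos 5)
Most of your argument is sound and runs parallel to the paper's: the two-sided barriers $\varphi_j\mp A|t-j|$ (the paper uses exactly these, directly in $PSH(M,\omega)$ rather than through the $\varepsilon$-family), the monotone approximation $\varphi^{\varepsilon}\searrow\varphi$ (which the paper isolates as Proposition \ref{cor:approx}), Bedford--Taylor convergence along decreasing sequences for $(\omega+dd^c_{x,z}\varphi)^{n+1}=0$, and the comparison/domination principle for uniqueness. These give (ii), (iii), the boundedness in (i), and uniqueness, essentially as in the paper.

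The genuine gap is the continuity of $\varphi$ on $\Amp(\{\omega\})\times \bar S$, which is precisely the delicate part of (i) and the point where the paper spends most of its effort. You dispose of it by asserting that one can build ``continuous $\omega$-psh local barriers'' near each point of the ample locus that pinch $\varphi$; but no such local construction is available, and you yourself flag it as the main difficulty without supplying it. Upper semicontinuity is indeed free (decreasing limit of continuous $\varphi^{\varepsilon}$), but lower semicontinuity of a Perron envelope cannot be localized: the competitors in Definition \ref{defi: geodesic} must be $\omega$-psh on all of $M$, and $\omega$ degenerates on $X\setminus\Amp(\{\omega\})$, so a barrier produced by solving a local Dirichlet problem near a point of the ample locus cannot be glued into a global candidate without already knowing boundary regularity of $\varphi$ on the gluing region --- the argument is circular. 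Nor can one regularize: as the paper notes elsewhere, $\omega$-psh functions cannot in general be smoothed when $\omega$ is merely semi-positive, so the usual K\"ahler-case route to continuity breaks down. The paper's actual mechanism is global: after reducing to $\varphi_0<\varphi_1$ and replacing the strip by an annulus $D\subset\P^1$, it writes $\varphi_t=P_\Theta(F)+\rho$ as a global envelope on the compact manifold $X\times\P^1$ with respect to the semi-positive and big form $\Theta=\omega+A\,\omega_{FS}$, for a carefully chosen continuous $S^1$-invariant obstacle $F$ satisfying (a)--(c) (condition (c) guaranteeing that the contact set $\{P_\Theta(F)=F\}$ avoids $X\times D$, so balayage plus the maximum principle yield the identity), and then invokes the viscosity results of \cite{EGZ16} (together with a uniform-approximation argument passing from smooth to continuous obstacles, in the spirit of \cite{BD12,Ber}) to conclude that $P_\Theta(F)$ is continuous on $\Amp(\{\Theta\})=\Amp(\{\omega\})\times\bar S$. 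Some such global input is indispensable; without it your proof of (i) is incomplete.
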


We thank Hoang Chinh Lu for sharing his ideas on the continuity of $\f$.

\begin{proof}
 The proof follows from a classical balayage technique, together with a barrier argument as
 noted by  Berndtsson \cite{Bern13}. 
 Set $A=\| \f_1-\f_0\|_{L^{\infty}(X)}$. 
 
 Observe that the function $\f_0-At$, with $t=\Re(z)$, is $\omega$-psh on $M$ and 
 $\f_0-At|_{\partial M}\leq \f_{0,1}$. Hence it belongs to the family ${\mathcal F}$
 defining the upper envelope $\f$, so $\f_0-At \leq \f_t $. 

Similarly $\f_0+At$ is a $\omega$-psh function on $M$ and $\f_0+At|_{\partial M} \geq \f_{0,1}$. 
Since $(\omega+dd^c_{x,z} (\f_0+At))^{n+1}=0$, it follows from the maximum principle that $u \leq \f_0+At$,
 for any  $u \in {\mathcal F}$ in the family. Therefore 
 $$ 
 \f_0-At\leq \f_t \leq \f_0+At.
 $$
 
  Similar arguments show that 
 $$
 \f_1+A(t-1)\leq \f_t\leq \f_1-A(t-1).
 $$
 
 The upper semi-continuous regularization $\f^*$ of $\f$   satisfies the same estimates, showing in particular
that $\f^*|_{\partial M}= \f_{0,1}$. Since $\f^*$ is $\omega$-psh, we infer $\f^* \in {\mathcal F}$ hence $\f^*= \f$. 
Thus  $\f$ is $\omega$-psh and uniformly bounded, proving the first statement in $(i)$. 
Classical balayage arguments show that  $(\omega+dd^c_{x,z} \f)^{n+1} = 0$, proving $(iii)$.
 
We now prove prove $(ii)$. Consider the function  
$$
\chi_t(x)=\max\{ \f_0(x)-A \log |z|, \f_1(x)+A (\log|z|-1) \}
$$
and note that it belongs to ${\mathcal F}$ and has the right boundary values. 

Since $\chi_-=\f_0(x)-At \leq \f$ with equality at $t=0$, we infer for all $x$,
$$
-A =\frac{\partial \chi_-}{\partial t}_{|t=0} \leq \dot{\f}_0(x).
$$
Similarly
$\chi_+=\f_1(x)+A(t-1) \leq \f$ with equality at $t=1$ yields for all $x$,
$
\dot{\f}_1(x) \leq +A =\frac{\partial \chi_+}{\partial t}_{|t=1} .
$
Since $t \mapsto \f_t(x)$ is convex
(by subharmonicity in $z$), we infer that for a.e. $t,x$,
$
-A \leq \dot{\f}_0(x) \leq \dot{\f}_t(x) \leq \dot{\f}_1(x) \leq +A.
$

\smallskip

It remains to show that $\f$ is continuous on $\Amp(\{\omega\}) \times \bar{S}$. 
We can assume without loss of generality that $\f_0 < \f_1$. 
Indeed, given any $\f_0,\f_1\in \mathcal{H}_\omega$, there exists $C>0$ such that $\f_0 <\f_1+C$. 
By Lemma \ref{lem:translate}, the Mabuchi geodesic joining $\f_0$ and $\f_1+C$ is $\psi_t=\f_t+Ct$, $t\in [0,1]$. The continuity of $(x,t)\rightarrow \psi_t(x)$ will then imply the continuity of $(x,t)\rightarrow \f_t(x)$.

We change notations slighlty, replacing the strip $S$ by  the annulus   $D:=\{z=e^{t+is}\in \C\;:\; 1\leq |w|\leq e \}$.
We are going to express the function $\f$ as a global $\Theta$-psh envelope on the compact manifold
$ X \times \P^1$, where we view the annulus $D$ as a subset of the Rieman sphere,
$\C \subset \P^1=\C \cup \{ \infty\}$. The form
$\Theta(x,z)=\omega(x)+A \omega_{FS}(z)$ is a  semi-positive and big form on  the compact K\"ahler manifold
$\widetilde{M}:=X\times \P^1$, so the viscosity approach of \cite{EGZ16} can be applied
showing that the envelope $\f$ is continuous on $\Amp(\{\omega\}) \times \bar{S}$. 
Here $\omega_{FS}$ denotes  the Fubini-Study metric on 
$\P^1$ and $A>0$ is a constant to be chosen below. 

Consider $U = \max(U_0, U_1)$, where
$U_0(x,z):= \varphi_0(x)$ and 
$$
U_1(x,z):=\varphi_1(x)+ A(\log|z|^2-\log(|z|^2+1) +\log (e^2+1) -2).
$$
We choose $A > 0$ so large that $U(x, 1) \equiv  \f_0(x)$. Note that
$U(x,e) \equiv \f_1(x)$ since $\f_0<\f_1$. Both $U_0$ and $U_1$ are 
$\Theta$-psh on $\widetilde{M}$, hence so is $U$.

Fix $\rho$ a local potential of $A\omega_{FS}$ in $D$ such that $\rho|_{\partial D}=0$ and
let $F$ be a continuous $S^1$-invariant function on $\widetilde{M}$  such that
\begin{itemize}
\item[(a)] $F = \f_{0,1}$ on $X \times \partial D$,
\item[(b)] $F (x, z) \geq U(x,z) \geq \f_0(x)$,
\item[(c)] $F(x,z)+\rho(z) >\f_{t}(x)$ in $X\times D$, with $t=\log|z|$.
\end{itemize}
We let the reader check that the function $F=U$ in $\widetilde{M} \setminus X \times D$ and 
$$
F(x,z) := (1-\log|z|)\f_0(x)+(\log|z|)\f_1(x)-\rho(z)+(\log|z|)(1-\log|z|),
$$
for $(x,z) \in X \times D$,
does the job.

We claim that  for all $(x,z) \in X \times D$,
$$
P_\Theta (F)(x, z) + \rho(z) = \f_{\log|z|} (x)
$$
where
$$
P_\Theta (F):=\sup\{v\; : \;v\in \psh (\widetilde{M},\Theta)
\text{ and } 
 v\leq F\}.
$$
Indeed $P_\Theta (F)+\rho$ is $\omega$-psh in $ X \times D$ and has boundary values $\leq \f_{0,1}$.
 It follows from definition of the geodesic that $P_\Theta(F)+\rho \leq \f_{t}$. On the other hand, $F +\rho \geq U +\rho \in \psh(X \times D,  \omega)$ 
 and $U=\f_{0,1}$ on $\partial M$ thus $P_\Theta(F)+\rho =\f_{0,1}$ on $\partial M$. 
 Condition (c) insures that  $M=X \times D$ does not meet the contact set $\{P_\Theta (F) = F \}$
since $F+\rho>\f_t\geq P_\Theta(F)+\rho$.
It thus follows from a balayage argument \cite{BT82} that $(\Theta+dd^c P_\Theta(F))^{n+1}=0$ in $M$, and 
the maximum principle yields
\begin{equation*}
P_\Theta(F)+\rho=\f_{t}.
\end{equation*}
The continuity of $\f$ on $\Amp(\{\omega\}) \times \bar{S}$ now follows from \cite{EGZ16} 
together with the following easy observation: the arguments in \cite[Section 2.2]{EGZ16} ensure that if $F$ is a smooth function on $\widetilde{M}$, then $P_\Theta(F)$ is a $\Theta$-psh function, continuous on $\Amp(\{\Theta\})$. The same result holds if $F$ is merely continuous. Indeed, let $F_j$ be a sequence of smooth functions on $\widetilde{M}$ converging uniformly to $F$. Taking the envelope at both sides of the inequality $F_j\leq F+\|F_j-F\|_{L^\infty(X)} $ we get $P_\Theta(F_j)\leq P_\Theta(F)+ \|F_j-F\|_{L^\infty(X)} $. Hence, $\|P_\Theta(F_j)-P_\Theta(F)\|_{L^\infty(X)}\leq \|F_j-F\|_{L^\infty(X)}$. Thus $P_\Theta(F_j)$ converges uniformly to $P_\Theta(F)$, and so $P_\Theta(F)$ is a $\Theta$-psh function that is continuous on $\Amp(\{\Theta\})=\Amp(\{\omega\})\times \bar{S} $. 
\end{proof}

\begin{rem}
If one could choose $F$ smooth in the proof above, it would follow from \cite{BD12}  (or from \cite[Theorem 1.2]{Ber}) that
$\f \in {\mathcal C}^{1,\bar 1}( \Amp(\a) \times S)$.
This would also provide a compact proof of Chen's regularity result.
\end{rem}

 We now observe that  geodesics in ${\mathcal H}_{\omega}$ are projection
of those in ${\mathcal H}_{\omega_\e}$ :

\begin{prop} \label{cor:approx}
Let $\f$ denote the geodesic joining $\f_0$ to $\f_1$ in ${\mathcal H}_{\omega}$
and let $\f^{\e}$ denote the corresponding geodesic in the space ${\mathcal H}_{\omega_\e}$. 
The map $\e \mapsto \f^\e$ is increasing and $\f^\e$ decreases to $\f$ as $\e$ decreases to zero.
Moreover
$$
\f=P(\f^\e),
$$
where $P$ denotes the projection operator onto the space $PSH(M,\omega)$.
\end{prop}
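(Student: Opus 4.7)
The plan is to argue by comparing the families of admissible functions defining the two Mabuchi envelopes, and then to pass to the limit $\e \to 0^+$. All three assertions (monotonicity, decreasing convergence, and the projection formula) will follow from this comparison together with standard envelope manipulations.

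For the monotonicity, if $0 < \e_1 < \e_2$ then $\omega_{\e_1} \leq \omega_{\e_2}$, so every $\omega_{\e_1}$-psh function on $M$ is automatically $\omega_{\e_2}$-psh. The defining family of $\f^{\e_1}$ is thus contained in that of $\f^{\e_2}$, which yields $\f^{\e_1} \leq \f^{\e_2}$. The identical inclusion applied to $\omega$ and $\omega_\e$ gives $\f \leq \f^\e$ for every $\e > 0$. I then set $\hat{\f} := \lim_{\e \to 0^+} \f^\e$, the pointwise decreasing limit, which automatically satisfies $\f \leq \hat{\f}$. The whole proof is then reduced to showing the reverse inequality $\hat{\f} \leq \f$.

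To this end I verify that $\hat{\f}$ lies in the family defining $\f$. First, Proposition \ref{prop:Semmes} applied in the smooth K\"ahler setting $(X,\omega_\e)$ provides the barrier $|\f^\e(x,z) - \f_0(x)| \leq A\,\Re(z)$ with $A = \|\f_0-\f_1\|_{L^\infty(X)}$, a bound that is uniform in $\e$. Hence $\f^\e \to \hat{\f}$ in $L^1_{\mathrm{loc}}(M)$ and $dd^c \f^\e \to dd^c \hat{\f}$ weakly as currents; passing to the limit in $\omega + dd^c \f^\e \geq -\e\,\omega_X$ gives $\omega + dd^c \hat{\f} \geq 0$, i.e.\ $\hat{\f} \in PSH(M,\omega)$. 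Moreover, each $\f^\e$ is continuous up to $\partial M$ with $\f^\e|_{\partial M} = \f_{0,1}$, so the decreasing limit satisfies $\hat{\f}|_{\partial M} \leq \f_{0,1}$. Thus $\hat{\f}$ is a candidate in the envelope defining $\f$, yielding $\hat{\f} \leq \f$ and therefore $\f^\e \searrow \f$ as claimed.

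The projection formula is then immediate. Since $\f \in PSH(M,\omega)$ and $\f \leq \f^\e$ on $M$, $\f$ is a candidate in the envelope defining $P(\f^\e)$, so $\f \leq P(\f^\e)$. Conversely, $P(\f^\e) \in PSH(M,\omega)$ with $P(\f^\e) \leq \f^\e$ on $M$; using the continuity of $\f^\e$ on $\partial M$, this forces $P(\f^\e) \leq \f_{0,1}$ on $\partial M$, so $P(\f^\e)$ is itself a candidate for $\f$, whence $P(\f^\e) \leq \f$. I expect the main technical point to be the currents-convergence step used to propagate positivity to the limit; it is precisely at that step that the $\e$-independent $L^\infty$ barrier coming from Proposition \ref{prop:Semmes} is indispensable, the rest being a direct manipulation of upper envelopes.
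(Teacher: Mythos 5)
Your argument is correct and follows essentially the same route as the paper: monotonicity and $\f\leq\f^\e$ from the inclusion of the defining envelope families, identification of the decreasing limit as a candidate for $\f$, and the two-way candidacy comparison for $P(\f^\e)$. The only difference is that you spell out the plurisubharmonicity of the decreasing limit via weak convergence of currents, a detail the paper leaves implicit.
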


Recall that, for an upper semi-continuous function $u:M \rightarrow \R$, its projection $P(u)$ is defined
by
$$
P(u):=\sup \{ v \in PSH(M,\omega) \, ; \, v \leq u \}.
$$
The function $P(u)$ is either identically $-\infty$ or belongs to $PSH(M,\omega)$.
It is the greatest $\omega$-psh function on $M$ that lies below $u$.

\begin{proof}
Set $\p:=P(\f^{\e})$. 
Since $\omega \leq \omega_\e$, it follows from the envelope point of view that $\f \leq \f^{\e}$.
Thus $\f =P(\f) \leq P(\f^{\e})=\p$ and $\p \in PSH(M,\omega)$.
Now $\p \leq \f$ since $\p \leq \f^{\e}=\f_0,\f_1$ on $\partial M$ and $\p \in PSH(M,\omega)$.
Thus $\p=P(\f^{\e})=\f$.

Fix $\e' \leq \e$. The inclusion $PSH(M,\omega_{\e'}) \subset PSH(M,\omega_\e)$ implies similarly that
$\f \leq \f^{\e'} \leq \f^{\e}$. The decreasing limit $v$ of $\f^\e$, as $\e$ decreases to zero, 
satsifies both $\f \leq v$ and $v \in PSH(M,\omega)$ with boundary values $\f_0,\f_1$, thus
$v=\f$.
\end{proof}

It will  also be interesting to consider {\it subgeodesics}: 

\begin{defi} \label{defi:subgeodesic}
A subgeodesic is a path $(\f_t)$ of functions
in $\H_\omega$ (or in larger  classes of $\omega$-psh functions) such that the associated function is a $\omega$-psh function on $X \times S$.
\end{defi}

 We shall soon need the following simple observation:

\begin{lem} \label{lem:translate}
Fix $c \in \R$, $\f,\p \in \HH_\omega$ and  let $(\f_t)_{0 \leq t \leq 1}$ denote the Mabuchi geodesic joining $\f=\f_0$ to $\f_1=\p$.
Then $\p_t(x):=\f_t(x)-ct, 0 \leq t \leq 1, x \in X,$ is the Mabuchi geodesic joining $\f$ to $\p-c$.
\end{lem}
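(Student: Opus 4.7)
The approach exploits the envelope characterization of Mabuchi geodesics given in Definition \ref{defi: geodesic}. Writing $z = t+is \in S$, the key observation is that the function $(x,z) \mapsto ct$ on $M = X \times S$ depends only on $t = \Re(z)$ and is pluriharmonic, so $dd^c_{x,z}(ct) = 0$. Consequently the shift $u \mapsto u - ct$ is an involution of $PSH(M,\omega)$ onto itself.

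My plan is to show that the function $\psi(x,z) := \f(x,z) - ct$, associated to the path $\p_t(x) = \f_t(x) - ct$, coincides with the Mabuchi geodesic between $\f$ and $\p - c$. First I would note that $\p - c \in \HH_\omega$, since $\omega + dd^c(\p - c) = \omega_\p > 0$, so the target geodesic is well defined. Next I would verify the boundary values: $\psi|_{t=0} = \f_0 = \f$ and $\psi|_{t=1} = \f_1 - c = \p - c$, so $\psi$ has the correct Dirichlet data on $\partial M$.

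From here I would conclude in either of two equivalent ways. The first is to observe that the shift $u \mapsto u - ct$ gives an order-preserving bijection between the family
\[
 \{u \in PSH(M,\omega) \, : \, u \leq \f \text{ on } \{t=0\},\ u \leq \p \text{ on } \{t=1\}\}
\]
and the analogous family for the pair $(\f, \p - c)$: $\omega$-plurisubharmonicity is preserved by the previous observation, and the boundary inequalities are shifted by exactly $-c$ at $t = 1$. Taking suprema, the envelope defining the geodesic from $\f$ to $\p - c$ equals $\f - ct = \psi$. The second is to invoke the uniqueness statement in Proposition \ref{prop:Semmes}(iii): since $dd^c_{x,z}\psi = dd^c_{x,z}\f$, we have $(\omega + dd^c_{x,z}\psi)^{n+1} = (\omega + dd^c_{x,z}\f)^{n+1} = 0$, and $\psi$ is a bounded $\omega$-psh function on $M$ with the required boundary values, hence must be the Mabuchi geodesic.

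There is essentially no obstacle here: the lemma is a direct manifestation of the translation equivariance of the envelope construction under adding pluriharmonic linear functions in $t$, and the only point to take care of is checking that $ct$ is pluriharmonic on $M$ so that the $\omega$-psh family is preserved.
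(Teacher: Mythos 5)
Your proof is correct and follows essentially the same route as the paper: the paper's one-line argument is precisely that the envelope over the family $\{v \in PSH(M,\omega) : v \leq \f,\ v \leq \p - c \text{ on } \partial M\}$ equals $\f_t - ct$, which is your translation-equivariance observation using that $ct = c\,\Re(z)$ is pluriharmonic on $M$. Your second route via the uniqueness in Proposition \ref{prop:Semmes} is a fine alternative but not needed.
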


\begin{proof}
The proof follows from Definition \ref{defi: geodesic} and the definition of envelopes since 
$\sup \{ v \, ; \,  v \in PSH(M,\omega) \text{ and } v \leq  \f, v\leq \psi-c \text{ on  } \partial M\} = \f_t -ct . $
\end{proof}

\subsubsection{Mabuchi and other Finsler distances}

When $\omega$ is K\"ahler, the length of a smooth path $(\f_t)_{t \in [0,1]}$ in $\HH_\omega$ is defined in a standard way,
$$
\ell(\f):=\int_0^1 |\dot{\f_t}| dt=\int_0^1 \sqrt{ \int_X \dot{\f}_t^2 MA({\f_t})} dt.
$$
The distance between two points in $\HH_\omega$ is then
$$
d(\f_0,\f_1):=\inf \{ \ell(\f) \, | \, \f
\text{ is a smooth path  joining } \f_0 \text{ to } \f_1 \}.
$$

It is easy to verify that $d$ defines a semi-distance (i.e. non-negative, symmetric and satisfying the triangle inequality). 
It is however non trivial
to check that $d$ is non degenerate (see \cite{MM05} for a striking example).

Observe that $d$ induces a distance on $\HH_\a$ 
(that we abusively still denote by $d$)  compatible with the riemannian splitting
$\HH_\omega=\HH_\a \times \R$, by setting
$$
d(\omega_\f,\omega_\p):=d(\f,\p)
$$
whenever the potentials $\f,\p$ of $\omega_\f,\omega_\p$ are normalized by $E(\f)=E(\p)=0$ (see Section \ref{AubinMabuchi_functional} for the definition of the functional $E$).

\smallskip

It is rather easy to check that $(\HH_\a,d)$ is not a complete metric space. 
We shall describe the metric
completion $(\overline{\HH}_\a,d)$ in Section \ref{sec:geod}.
Following Darvas \cite{Dar15} we introduce a family of distances that generalize $d$:

\begin{defi}
For $p \geq 1$ and $\omega$ K\"ahler, we set
$$
d_p(\f_0,\f_1):=\inf \{ \ell_p(\f) \, | \, \f
\text{ is a smooth path  joining } \f_0 \text{ to } \f_1 \},
$$
where 
$
\ell_p(\f):=\int_0^1 |\dot{\f_t}|_p dt=\int_0^1 \left( \int_X \left| \dot{\f}_t \right|^p MA({\f_t}) \right)^{1/p} dt.
$
\end{defi}

Note that $d_2=d$ is the Mabuchi distance.
Mabuchi geodesics have constant speed with respect to all the Finsler structures $\ell_p$,
as was observed
by Berndtsson \cite[Lemma 2.1]{Bern09b}: for any ${\mathcal C}^1$-function $\chi$,
$$
t \mapsto \int_X \chi(\dot{\f}_t) MA(\f_t)
$$
is constant along a geodesic. Indeed 

\begin{eqnarray*}
\frac{d}{dt} \int_X \chi(\dot{\f}_t) MA(\f_t)&=& \int_X \chi'(\dot{\f}_t) \ddot{\f}_t MA(\f_t)
+\frac{n}{V_\alpha} \int_X \chi(\dot{\f}_t) dd^c \dot{\f}_t \wedge \omega_{\f_t}^{n-1} \\
&=& \int_X \chi'(\dot{\f}_t) \left\{ \ddot{\f}_t MA(\f_t) -\frac{n}{V_\alpha} d \dot{\f}_t  \wedge d^c \dot{\f}_t \wedge \omega_{\f_t}^{n-1} \right\}=0
\end{eqnarray*}

since $\ddot{\f}_t MA(\f_t) -\frac{n}{V_\alpha} d \dot{\f}_t  \wedge d^c \dot{\f}_t \wedge \omega_{\f_t}^{n-1}=0$.
Applying this observation to $\chi(t)=t^p$ shows that Mabuchi geodesics have constant $\ell_p$-speed.

\medskip

When $\omega$ is merely semi-positive there are fewer smooth paths within $\H_{\omega}$.
It is natural to consider smooth paths in $\H_{\omega_\e}$ and pass to the limit in
the previous definitions :
 
\begin{defi}\label{distance_semipos}
Assume $\omega$ is semi-positive and big.
Let $\f_0,\f_1\in \mathcal{H}_\omega$. We define the Mabuchi distance between $\f_0$ and $\f_1$ as
$$
d_p(\f_0, \f_1):= \liminf_{\varepsilon\rightarrow 0} d_{p,\varepsilon}(\f_0, \f_1),
$$
where $d_{p, \varepsilon}$ is the distance w.r.t. the K\"ahler form $\omega_\varepsilon:=\omega+\varepsilon\omega_X$.
\end{defi}

We will show in Theorem \ref{thm:approx0} that it is a distance, which moreover does not 
depend on the way we approximate $\omega$ by K\"ahler classes.

\begin{rem}\label{rem:lenght_semipositive}
For any smooth path $\psi:[0,1]\rightarrow \mathcal{H}_\omega$, we can still define
$$\ell_p(\psi):= \int_0^1 \left( \frac{1}{V}\int_X |\dot{\psi}_t|^p (\omega+dd^c \psi_t)^n  \right)^{1/p} dt$$
when $\omega$ is merely semi-positive. Since $PSH(M,\omega) \subset PSH(M,\omega_\e)$, $\psi_t$ is both in $\mathcal{H}_{\omega}$ and $\mathcal{H}_{\omega_\varepsilon}$. Observe that
\begin{eqnarray*}
V_\e^{-1} \int_X |\dot{\p}_t|^p (\omega_\e+dd^c {\p_t})^n
 & =& V_\e^{-1} \int_X |\dot{\p}_t|^p (\omega+dd^c {\p_t}+\e \omega_X)^n \\
 & \leq & V^{-1} \int_X |\dot{\p}_t|^p (\omega+dd^c {\p_t})^n +A \e,
\end{eqnarray*}
hence
$$\ell_{p,\varepsilon}(\psi) \leq \ell_p(\psi)+ A' \varepsilon$$
where $\ell_{p,\varepsilon}$ denotes the length in $\mathcal{H}_{\omega_\varepsilon}$. We infer
$$d_p(\f_0, \f_1) \leq \inf \{\ell_p(\psi) \quad \psi \;{\rm smooth\; path\; joining \; \f_0 \; and \; \f_1\; in }\; \mathcal{H}_\omega\}.$$
The converse inequality is however unclear, due to the lack of positivity of $\omega$: it is difficult to smooth out $\omega$-psh functions if $\omega$ is not K\"ahler. This partially explains Definition \ref{distance_semipos}.
\end{rem}

\subsection{Approximation by K\"ahler classes}  \label{sec:regularity}

Fix $\f_0,\f_1 \in \HH_\omega$.
We let $(\f_t)_{0 \leq t \leq 1}$ denote the Mabuchi geodesic in $\HH_{\omega}$ joining $\f_0$ to $\f_1$.
\begin{defi}
For $t=0,1$ we set
$$
I(t):=\int_X |\dot{\f}_t|^p MA(\f_t).
$$
\end{defi}

\begin{thm}  \label{thm:approx0}
Set $\omega_\e=\omega+ \e \omega_X$, $\e>0$. 
Then  $\lim_{\e \rightarrow 0} d_{p,\omega_\e}(\f_0,\f_1)$ exists and is independent of $\omega_X$. More precisely,
$$
d_{p,\e}^p (\f_0,\f_1) \rightarrow I(0)=I(1).
$$
In particular $d_{p}(\f_0,\f_1)=I(0)^{1/p}=I(1)^{1/p}$ defines a distance on $\H_{\omega}$.
\end{thm}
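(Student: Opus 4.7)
The plan is to transport everything to the K\"{a}hler approximations $\omega_\e = \omega + \e\omega_X$ and exploit Proposition \ref{cor:approx}. For each $\e > 0$ let $\f^\e$ denote the Mabuchi $\omega_\e$-geodesic joining $\f_0$ and $\f_1$; by Proposition \ref{cor:approx}, $\f^\e$ decreases pointwise to $\f$ as $\e \downarrow 0$. On the K\"{a}hler side one may invoke Darvas \cite{Dar15}, together with the Berndtsson constancy of $\ell_p$-speed along Mabuchi geodesics recalled above Definition \ref{distance_semipos}, to obtain
\begin{equation*}
d_{p,\e}(\f_0,\f_1)^p \; = \; \int_X |\dot{\f}^\e_t|^p MA_\e(\f^\e_t), \qquad \forall\, t\in[0,1].
\end{equation*}
Evaluated at $t=0$ or $t=1$, where $\f^\e_t = \f_t$ does not depend on $\e$, the theorem thus reduces to passing to the limit in the integrals $\int_X |\dot{\f}^\e_0|^p MA_\e(\f_0)$ and $\int_X |\dot{\f}^\e_1|^p MA_\e(\f_1)$.

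The key step is to show that $\dot{\f}^\e_0 \downarrow \dot{\f}_0$ pointwise as $\e \downarrow 0$. For each $x\in X$ the function $t \mapsto \f^\e_t(x)$ is convex: its associated function on $M = X\times S$ is $\omega_\e$-psh and invariant under imaginary translation in $z$, hence subharmonic on each horizontal line. Since $\f^\e_0(x) = \f_0(x)$ is independent of $\e$, the convex-difference-quotient identity reads
\begin{equation*}
\dot{\f}^\e_0(x) \; = \; \inf_{t > 0} \frac{\f^\e_t(x) - \f_0(x)}{t},
\end{equation*}
and for each fixed $t > 0$ this quotient decreases to $(\f_t(x)-\f_0(x))/t$. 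Exchanging the two infima gives $\inf_\e \dot{\f}^\e_0(x) = \dot{\f}_0(x)$, while the monotonicity $\e' \le \e \Rightarrow \dot{\f}^{\e'}_0 \le \dot{\f}^\e_0$ follows from the same quotient comparison. Together with the uniform bound $|\dot{\f}^\e_0| \le \|\f_0-\f_1\|_{L^\infty(X)}$ of Proposition \ref{prop:Semmes}(ii), this enables dominated convergence. Since $\f_0$ is smooth, $MA_\e(\f_0) = V_\e^{-1}(\omega_\e + dd^c\f_0)^n$ is a polynomial in $\e$ with smooth coefficients, hence converges to $MA(\f_0)$ in total variation; splitting
\begin{equation*}
\int_X |\dot{\f}^\e_0|^p MA_\e(\f_0) = \int_X |\dot{\f}^\e_0|^p MA(\f_0) + \int_X |\dot{\f}^\e_0|^p \bigl(MA_\e(\f_0) - MA(\f_0)\bigr),
\end{equation*}
the first term converges to $I(0)$ and the second to $0$. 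The identical argument at $t=1$ shows $d_{p,\e}^p(\f_0,\f_1) \to I(1)$, so $I(0) = I(1)$ and the common limit is independent of the auxiliary form $\omega_X$.

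It remains to verify that $d_p$ is a distance on $\HH_\omega$. Symmetry and non-negativity are immediate from the formula $d_p^p = I(0) = I(1)$, and the triangle inequality is inherited by passing to the limit from the genuine distances $d_{p,\e}$. For non-degeneracy, assume $d_p(\f_0,\f_1) = 0$; then $I(0) = I(1) = 0$, so $\dot{\f}_0 = 0$ $MA(\f_0)$-a.e.\ and $\dot{\f}_1 = 0$ $MA(\f_1)$-a.e. Since $\f_0,\f_1 \in \HH_\omega$, both $\omega + dd^c\f_0$ and $\omega+dd^c\f_1$ are K\"{a}hler on $\Amp(\a)$, so the corresponding Monge--Amp\`{e}re measures have smooth positive densities there, forcing the two vanishings to hold a.e.\ on $\Amp(\a)$. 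Convexity of $t\mapsto\f_t(x)$ combined with $\dot{\f}_0 = 0$ gives $\f_1 \ge \f_0$ on $\Amp(\a)$, while $\dot{\f}_1 = 0$ (the left derivative at $t=1$) gives $\f_0 \ge \f_1$ there; hence $\f_0 = \f_1$ on $\Amp(\a)$, and by continuity on all of $X$. The main technical obstacle is the exchange-of-infima step establishing $\dot{\f}^\e_0 \downarrow \dot{\f}_0$; once this monotone pointwise convergence is in hand, the remainder is a routine combination of dominated convergence with the smooth Darvas theorem.
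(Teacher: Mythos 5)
Your proof is correct and follows essentially the same route as the paper: both reduce to Darvas's endpoint formula $d_{p,\e}^p(\f_0,\f_1)=\int_X|\dot{\f}_0^\e|^p MA_\e(\f_0)$, prove pointwise convergence $\dot{\f}_0^\e\to\dot{\f}_0^+$ from Proposition \ref{cor:approx} together with convexity of $t\mapsto\f_t^\e(x)$ (the paper squeezes via difference quotients, you exchange infima — the same computation), and conclude by dominated convergence and the uniform bound $|\dot{\f}_0^\e|\le\|\f_0-\f_1\|_{L^\infty}$. Your verification of non-degeneracy, using positivity of $MA(\f_0)$ on $\Amp(\a)$ and convexity in $t$, likewise matches the paper's argument, only spelled out in more detail.
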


In the definition of $I(0), I(1)$, the time derivatives $\dot{\f}_0=\dot{\f}_0^+$, $\dot{\f}_1=\dot{\f}_1^-$ denote the right and left derivative, respectively.
\begin{rem}\label{def d singular space}
When $\omega=\pi^* \omega_Y$, for some K\"ahler form $\omega_Y$ on a compact normal space $Y$, for each $p\geq 1$ and $\forall \phi_0, \phi_1\in \H_{\omega_Y}$ we define
$$d_p(\phi_0, \phi_1):= d_p (\f_0, \f_1),\quad\;\rm{where} \;\f_0= \pi^* \phi_0, \f_1= \pi^* \phi_1.$$
This definition does not depend on the choice of resolution. Indeed let $\pi':X'\rightarrow Y$ be another resolution of $Y$ that dominates $X$, i.e. there exists an holomorphic and bimeromorphic map $f: X'\rightarrow X$ such that $\pi'= \pi\circ f$. Set $\omega':= {\pi'}^* \omega_Y= f^* \omega$. We need to show that
$$d_{p, \omega} (\f_0, \f_1)= d_{p, f^*\omega} (f^* \f_0, f^* \f_1). $$
Denote by $\psi_t$ the $ f^*\omega$-geodesic joining $f^* \f_0$ and $f^* \f_1$. We claim that $\psi_t= f^* \f_t$. We first observe that, since  $\psi_t$ is a $f^* \omega$-psh function for each fixed $t$, $\psi_t= f^* \gamma_t$ where $\gamma_t$ is a $\omega$-psh function on $X$. Set $M':= X'\times S$, $\psi(x',t):=\psi_t(x') $ and $\gamma(x',t):= \gamma_t(x')$ for each $(x',t)\in M'$. By construction we have that
$$f^*(\omega+dd^c \gamma)^{n+1}= (f^*\omega+dd^c \psi)^{n+1}=0\quad {\rm on}\;M':= X'\times S, \quad  \psi|_{\partial M'}= f^* \f_{0,1}.$$
The claim follows from the uniquennes of the solution of the Dirichlet problem above (Proposition \ref{prop:Semmes}).
The invariance of the non-pluripolar Monge-Amp\`ere measure under bimeromorphic maps \cite{DN15} together with the fact that $V:=\int_X \omega^n= \int_{X'} f^* \omega$ give
$$\int_X |\dot{\f}_0|^p \frac{(\omega+dd^c \f_0)^n}{V}= \int_{X'} |\dot{f^* \f}_0|^p \frac{(f^*\omega+dd^c f^*\f_0)^n}{ V}= \int_{X'} |\dot{\psi}_0|^p \frac{(\omega'+dd^c \f_0)^n}{V}.$$
The conclusion then follows from Theorem \ref{thm:approx0}.

\end{rem}

\begin{proof}
Observe that $\f_0, \f_1 \in \mathcal{H}_{\omega_{\varepsilon}}$ 
and let $\f_t^{\varepsilon}$ be the corresponding geodesic. 
It follows from \cite[Theorem 3.5]{Dar15} that
$$  
d_{p,\e}^p (\f_0,\f_1) =  V_{\varepsilon}^{-1}\int_X |\dot{\f}_0^\varepsilon|^p (\omega_\e +dd^c \f_0)^n.
 $$
Now  observe that 
$$
\dot{\f}_0^+\leq \dot{\f}_0^\varepsilon \leq \frac{\f_t^\varepsilon-\f_0}{t} \quad \forall t\in (0, 1)
$$
where the first inequality follows from the fact that $\e\rightarrow\f_t^\e$ is decreasing (Proposition \ref{cor:approx}),
while the second uses the convexity of $t \mapsto \f_t^\e$.
Thus
$$
 | \dot{\f}_0^\varepsilon - \dot{\f}_0^+ |\leq \left| \frac{\f_t^\varepsilon-\f_0}{t}  -  \dot{\f}_0^+ \right|.
 $$
 
Letting $\varepsilon\searrow 0$ and then $t\rightarrow 0$ shows that $| \dot{\f}_0^\varepsilon - \dot{\f}_0^+ |$ converges pointwise to zero.
Moreover, $(\omega_\varepsilon+dd^c\f_0)^n= f_\varepsilon dV$ where $dV$ is the Lebesgue measure and 
$f_\varepsilon>0$ are smooth densities which converge locally uniformly to $f\geq 0$ with 
$(\omega+dd^c \f_0)^n=f dV$. The dominated convergence theorem thus yields
$$
\lim_{\e\rightarrow 0} d_{p, \varepsilon}^p(\f_0, \f_1)=  V^{-1}\int_X |\dot{\f}_0^+|^p (\omega+dd^c \f_0)^n=I(0).
$$
 The argument for $I(1)$ is similar.
 
 This shows in particular that $d_p$ is a distance on $\H_{\omega}$:
 if $d_p(\f_0,\f_1)=0$, then $I(0)=I(1)=0$, hence
 $\dot{\f}_0(x)=\dot{\f}_1(x)=0$ for a.e. $x \in X$, which implies $\dot{\f}_t (x)=0$ for a.e. $x \in X$,
 by convexity of $t \mapsto \f_t(x)$. Thus, $\f_0(x)=\f_1(x)$ for a.e. $x \in X$.
\end{proof}

We now extend the definition of the distance $d_p$ for bounded $\omega$-psh potentials.

\begin{defi}\label{def: bd}
Let $\f_0, \f_1 \in \psh (X, \omega)\cap L^\infty(X)$ then 
$$d_p(\f_0, \f_1):= \liminf_{\varepsilon\rightarrow 0} \liminf_{j,k\rightarrow +\infty} d_{p,\e} (\f_0^j, \f_1^k) = \liminf_{\varepsilon\rightarrow 0} d_{p, \e}(\f_0, \f_1)$$
where $\f_0^j, \f_1^k$ are smooth sequences of $\omega_\e$-psh functions decreasing to $\f_0$ and $\f_1$, respectively.
\end{defi}
Observe that $d_{p, \omega_\e}(\f_0, \f_1) $ is well defined for potentials in $\mathcal{E}^p(X, \omega_\e)$ (\cite{Dar15}), and so in parti\-cular for bounded $\omega_\e$-psh functions.\\
\begin{prop}\label{quasi decreasing}
Let $\f_0, \f_1 \in \psh (X, \omega)\cap L^\infty(X)$. The limit of $d_{p, \omega_\e}(\f_0, \f_1)$ as $\e$ goes to zero exists and it does not depend on the choice of $\omega_X$.
\end{prop}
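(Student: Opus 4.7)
The plan is to mimic Theorem \ref{thm:approx0}, replacing smooth endpoints by bounded ones. Two ingredients must be upgraded to the bounded setting: the Darvas integration formula for $d_{p,\e}$, and the envelope comparison of Proposition \ref{cor:approx}.

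First, for each $\e > 0$, Darvas's formula \cite[Theorem 3.5]{Dar15} extends from $\HH_{\omega_\e}$ to bounded $\omega_\e$-psh potentials by approximation, yielding
$$
d_{p,\e}^p(\f_0,\f_1) = V_\e^{-1}\int_X |\dot{\f}_0^\e|^p \, (\omega_\e+dd^c \f_0)^n,
$$
where $\f^\e$ is the bounded $\omega_\e$-Mabuchi geodesic joining $\f_0$ to $\f_1$, defined as the envelope of $\omega_\e$-psh subgeodesics with boundary values $\f_{0,1}$. Proposition \ref{cor:approx} extends verbatim: the inclusion $\psh(M,\omega_{\e'}) \subset \psh(M,\omega_\e)$ for $\e' \leq \e$ gives $\f^{\e'} \leq \f^\e$, and the decreasing limit of $\f^\e$ as $\e \to 0^+$ is a bounded $\omega$-psh function on $M$ with boundary values $\f_0,\f_1$, which by the uniqueness part of Proposition \ref{prop:Semmes} must coincide with the $\omega$-Mabuchi geodesic $\f$.

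Combining this monotonicity in $\e$ with the pointwise convexity of $t \mapsto \f_t^\e(x)$ yields the sandwich
$$
\dot{\f}_0^+(x) \;\leq\; \dot{\f}_0^\e(x) \;\leq\; \frac{\f_t^\e(x)-\f_0(x)}{t},
$$
valid at every $x \in X$ and every $t \in (0,1)$. Letting first $\e \to 0^+$ and then $t \to 0^+$ gives the pointwise convergence $\dot{\f}_0^\e \to \dot{\f}_0^+$ everywhere on $X$. To pass to the limit in the integral formula, I would combine three facts: the uniform bound $|\dot{\f}_0^\e| \leq \|\f_0-\f_1\|_{L^\infty}$ from the barrier argument of Proposition \ref{prop:Semmes}(ii) (which works for any bounded endpoints); the multinomial expansion
$$
(\omega_\e+dd^c\f_0)^n = (\omega+dd^c\f_0)^n + \e\,\mu_\e,
$$
where $\mu_\e$ is a positive measure of uniformly bounded total mass; and dominated convergence applied to the finite Borel measure $(\omega+dd^c \f_0)^n$ (well-defined by Bedford--Taylor since $\f_0$ is bounded). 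This yields
$$
\lim_{\e \to 0^+} d_{p,\e}^p(\f_0,\f_1) = V^{-1}\int_X |\dot{\f}_0^+|^p \, (\omega+dd^c\f_0)^n,
$$
a quantity depending only on $\f_0,\f_1,\omega$ and not on the auxiliary form $\omega_X$, since $\f$ is intrinsic to $\omega$. The analogous equality with $\f_1$ in place of $\f_0$ follows symmetrically.

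The most delicate point is that dominated convergence against the non-smooth measure $(\omega+dd^c\f_0)^n$ requires pointwise-everywhere (not merely a.e.) convergence $\dot{\f}_0^\e \to \dot{\f}_0^+$; the envelope monotonicity in $\e$ combined with convexity in $t$ is precisely what delivers this, which is the whole point of the low-regularity approach and what makes the argument transparent.
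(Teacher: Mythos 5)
The fatal step is your very first one: the assertion that Darvas's formula
$d_{p,\e}^p(\f_0,\f_1)=V_\e^{-1}\int_X|\dot{\f}_0^\e|^p(\omega_\e+dd^c\f_0)^n$
``extends from $\H_{\omega_\e}$ to bounded $\omega_\e$-psh potentials by approximation''. This is not merely unproved, it is false in general, and the paper flags exactly this pitfall in the remark following Proposition \ref{prop:bddmin}: taking $\f_0\equiv 0$ and $\f_1=\max(u,0)$ with $MA(u)$ a Dirac mass, one has $\int_X|\dot{\f}_1|^pMA(\f_1)=0$ while the limit of $d_{p,\e}(\f_0,\f_1)$ is strictly positive (it dominates a multiple of $I_p(\f_0,\f_1)>0$ by the $d_p$--$I_p$ comparison, valid for each K\"ahler form $\omega_\e$). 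Since your conclusion is symmetric in the two endpoints, it asserts precisely the identity that this example destroys. The approximation argument cannot repair this: for smooth decreasing approximants $\f_0^j\downarrow\f_0$ the measures $MA(\f_0^j)$ only converge weakly, and the integrands $|\dot{\f}_0^{j,\e}|^p$ only converge pointwise, which is not enough to pass the endpoint formula to the limit --- and the counterexample shows it genuinely does not pass. The parts of your argument that are sound (monotonicity $\e\mapsto\f^\e$ of bounded geodesics as in Proposition \ref{cor:approx}, the sandwich giving pointwise convergence $\dot{\f}_0^\e\to\dot{\f}_0^+$, the uniform Lipschitz bound, the multinomial expansion of $(\omega_\e+dd^c\f_0)^n$) all sit downstream of the false formula, so nothing is salvaged: without it you prove neither existence of the limit nor its independence of $\omega_X$.

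This is why the paper proceeds differently. It never identifies $\lim_\e d_{p,\e}^p$ with an endpoint integral for general bounded potentials (that identity is only recovered later, on the restricted class $\H_{bd}$, in Theorem \ref{thm:magicFormulaBdd}). Instead it uses the Pythagorean formula of Darvas to split $d_{p,\e}(\f_0,\f_1)$ through $\psi=\f_0\vee_\e\f_1$, so that each piece joins comparable potentials and the relevant velocity has a sign (Lemma \ref{lem:maxprince}); then the monotonicity of $\e$-geodesics yields the quasi-monotonicity
$V_\e\, d^p_{p,\e}(\f_0,\f_1)\le V_{\e'}\, d^p_{p,\e'}(\f_0,\f_1)+\eta(\e,\e')$ with $\eta(\e,\e')\to 0$,
which gives existence of the limit without computing it; independence of $\omega_X$ is then obtained by sandwiching $\omega_\e\le\widetilde{\omega}_\e\le\omega_{C\e}$, again using only monotonicity, not an explicit formula. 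If you want to follow your route, you must first restrict the endpoints to a class on which the endpoint integral formula can actually be established (this is the role of $\H_{bd}$), or else abandon the explicit identification of the limit altogether.
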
   

\begin{proof}
First, observe that since $\f_0, \f_1$ are bounded, they belong to $\E^p(X,\omega_\e)$ for any $0\leq \e\leq 1$.
By \cite[Corollary 4.14]{Dar15} we know that the Pythagorean formula holds true, i.e.
$$d^p_{p, \e}(\f_0, \f_1) = d^p_{p,\e}(\f_0, \f_0\vee_\varepsilon \f_1)+d^p_{p,\e}( \f_0\vee_\varepsilon \f_1, \f_1),$$
where $\psi:=\f_0\vee_\varepsilon \f_1$ is the greatest $\omega_\varepsilon$-psh function that lies below $\min{(\f_0,\f_1)}$.
Fix $\e\leq \e{'}$. We claim that

$$V_\e d^p_{p, \e}(\f_0, \psi)\leq V_{\e'}d^p_{p, {\e{'}}}(\f_0, \psi) \quad {\rm and}\quad V_\e d^p_{p, \e}(\psi, \f_1)\leq V_{\e'} d^p_{p, {\e{'}}}(\psi, \f_1).$$
Let $\psi_t^\varepsilon$, $\psi_t^{\varepsilon'}$ denote the $\varepsilon$-geodesic and the $\varepsilon'$-geodesic both joining  $\psi$ and $\f_0$. Since $\varepsilon\rightarrow \psi_t^\varepsilon$ is increasing (Proposition \ref{cor:approx}) we have that for any $t\in(0,1)$
$$\frac{\psi_t^\varepsilon-\psi}{t}\leq \frac{\psi_t^{\varepsilon'}-\psi}{t}$$
that implies $\dot{\psi}^\e_0\leq \dot{\psi}^{\e'}_0$. Moreover observe that since $\f_0(x)\geq \psi(x)$ for all $x\in X$, Lemma \ref{lem:maxprince} yields $\dot{\psi}^\e_0(x)\geq 0$ for all $x\in X$. It then follows that
$$\int_X |\dot{\psi}^\e_0|^p {(\omega_\varepsilon+dd^c \psi)^n} \leq \int_X |\dot{\psi}^{\e'}_0|^p {(\omega_{\varepsilon'}+dd^c \psi)^n},$$
hence the claim.
The same type of arguments give $V_\e d^p_{p, \e}(\psi, \f_1)\leq V_{\e'}d^p_{p, {\e{'}}}(\psi, \f_1)$.
Hence $${V_\e}{V_{\e'}^{-1}}d^p_{p, \e}(\f_0, \f_1) \leq   d^p_{p, \e'}(\f_0,  \f_0\vee_\varepsilon \f_1)+ d^p_{p, {\e{'}}}( \f_0\vee_\varepsilon \f_1, \f_1).$$
Using again \cite[Corollary 4.14]{Dar15} and the triangle inequality we get
$${V_\e}{V_{\e'}^{-1}} d^p_{p, \e}(\f_0, \f_1) \leq  d^p_{p, \e'}(\f_0, \f_1) + 2 d^p_{p, \e'}( \f_0\vee_\varepsilon \f_1, \f_0 \vee_{\varepsilon'} \f_1) .$$
Moreover, since $\f_0\vee_{\varepsilon'} \f_1 \geq \f_0\vee_{\varepsilon} \f_1$, \cite[Lemma 5.1]{Dar15} yields 
\begin{eqnarray*}
d^p_{p, \e'}( \f_0\vee_\varepsilon \f_1, \f_0\vee_{\varepsilon'} \f_1) & \leq  & \frac{1}{V_{\e'}}\int_X (\f_0\vee_{\varepsilon'} \f_1-\f_0\vee_{\varepsilon} \f_1)^p\, (\omega_{\e'}+dd^c (\f_0\vee_{\varepsilon} \f_1))^n\\
&\leq & \frac{1}{V_{\e'}}\int_X (\f_0\vee_{\varepsilon'} \f_1-\f_0\vee_{\varepsilon} \f_1)^p\, (\omega+\omega_X+dd^c (\f_0\vee_{\varepsilon} \f_1))^n\\
&:= &V_{\e'}^{-1}\eta(\e, \e').
\end{eqnarray*}
Observe that $\eta(\e, \e')$ converges to $0$ as $\e'$ goes to $0$.  From above we have
$$
V_\e d^p_{p, \e}(\f_0, \f_1) \leq V_{\e'} d^p_{p, \e'}(\f_0, \f_1) + \eta (\e, \e').
$$
Hence the limit exists. 

Now, let $\omega_X, \widetilde{\omega}_X$ be two K\"ahler metrics on $X$ such that $$\omega_X \leq \widetilde{\omega}_X\leq C \omega_X $$
for some $C>0$. Assume first $\f_0\leq \f_1$. Set $\widetilde{\omega}_\e:= \omega+ \e \widetilde{\omega}_X$ and observe that $\omega_\e\leq \widetilde{\omega}_\e\leq \omega_{\e'}$ where $\e'= \e C$. Let $\f_t^\e, \tilde{\f}_t^\e$ be the geodesic w.r.t. $\omega_\e$ and $\widetilde{\omega}_\e$, respectively and observe that $\f_t^\e\leq \tilde{\f}_t^\e\leq \f_t^{\e'}$. The same arguments as above give
$$|\dot{\f}_0^\e|^p\leq |\dot{\tilde{\f}}_0^\e|^p\leq |\dot{\f}_0^{\e'}|^p$$
hence
$$ \int_X |\dot{\varphi}^\e_0|^p {(\omega_\varepsilon+dd^c \varphi_0)^n} \leq \int_X |\dot{\tilde{\varphi}}^{\e}_0|^p {(\tilde{\omega_{\varepsilon}}+dd^c \varphi_0)^n}\leq \int_X |\dot{\varphi}^{\e'}_0|^p {(\omega_{\varepsilon'}+dd^c \varphi_0)^n} .$$
The latter tells us that the limit does not depend on $\omega_X$. To get rid of the asspumption $\f_0 \leq \f_1$, one can use Pythagorean formula as above.
\end{proof}


An adaptation of the classical Perron envelope technique yields the following result due to Berndtsson \cite{Bern13}:

\begin{prop} \label{prop:Bo}
Assume $\f_0,\f_1$ are bounded $\omega$-psh functions. Then
$$
\f(x,z):=\sup \{ u(x,z) \, | \, u \in PSH(X \times S, \omega) \text{ with }
\lim_{t \rightarrow 0,1} u \leq \f_{0,1} \}.
$$
is the unique bounded $\omega$-psh function on $X \times S$, which is the solution of the Dirichlet problem
$\f_{| X \times \partial S}=\f_{0,1}$ with
$$
(\omega+dd^c_{x,z} \f)^{n+1}=0 
\text{ in } X \times S.
$$

Moreover $\f(x,z)=\f(x,t)$ only depends on $\Re(z)$ and 
$|\dot{\f}| \leq \| \f_1-\f_0\|_{L^{\infty}(X)}$.
\end{prop}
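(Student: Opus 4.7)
The plan is to mimic the structure of Proposition \ref{prop:Semmes}, which is really the analogue of this statement for smooth boundary data; almost every argument there goes through in the bounded case with minor modifications. Set $A := \| \f_1-\f_0\|_{L^\infty(X)}$ and write $t = \Re(z)$. The first step is to produce two-sided barriers. The functions $\f_0(x)-At$ and $\f_1(x)+A(t-1)$ belong to the defining family $\mathcal F$ because they are $\omega$-psh on $M := X \times S$ and satisfy the correct boundary inequalities, while $\f_0(x)+At$ and $\f_1(x)-A(t-1)$ are $\omega$-psh with vanishing Monge-Amp\`ere and boundary values dominating $\f_{0,1}$, so by the maximum principle for bounded $\omega$-psh functions they dominate every $u \in \mathcal F$. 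This yields
\[
\max(\f_0-At,\f_1+A(t-1)) \;\leq\; \f \;\leq\; \min(\f_0+At,\f_1-A(t-1)),
\]
which supplies both the boundedness of $\f$ and, once we know $\f$ depends only on $t$, the Lipschitz estimate $|\dot\f|\leq A$.

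Next I would verify that $\f$ itself is $\omega$-psh with the correct boundary values. The u.s.c.\ regularization $\f^*$ is automatically $\omega$-psh on $M$ by a Choquet-type argument, and the barriers above force $\f^* = \f_{0,1}$ on $X \times \partial S$; thus $\f^* \in \mathcal F$, so by maximality $\f^* = \f$. The invariance $\f(x,z) = \f(x,z+is_0)$ for every $s_0 \in \R$ is immediate since $u(x, z+is_0) \in \mathcal F$ whenever $u \in \mathcal F$ (the form $\omega$ does not depend on $z$ and $\partial M$ is translation-invariant), so $\f$ depends only on $t = \Re(z)$.

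The only substantive step is showing $(\omega+dd^c_{x,z}\f)^{n+1}=0$ on the interior of $M$, which I expect to be the main obstacle. This follows from the standard balayage technique: fix a small coordinate ball $B \Subset M$ on which $\omega$ admits a smooth local potential. By the Bedford-Taylor theory \cite{BT82} one can solve the homogeneous Dirichlet problem on $B$ with boundary data $\f|_{\partial B}$, producing a bounded maximal $\omega$-psh function $v$ on $\bar B$ with $v \geq \f$ on $B$. Gluing $v$ inside $B$ with $\f$ outside gives a new competitor in $\mathcal F$, so in fact $v=\f$ on $B$; thus the Monge-Amp\`ere measure of $\f$ vanishes on $B$, and since $B$ was arbitrary, $(\omega+dd^c_{x,z}\f)^{n+1}=0$ on $M$. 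Finally, uniqueness among bounded $\omega$-psh solutions is a direct consequence of the domination principle: if $u$ is another bounded solution with $u|_{\partial M} = \f_{0,1}$, then both $u \leq \f$ (since $u \in \mathcal F$) and $\f \leq u$ (apply the comparison principle to $\f$ against the maximal solution $u$), hence $\f = u$. The technical care needed in the balayage step is to ensure that the local solution, extended by $\f$, remains globally $\omega$-psh; this is guaranteed by the maximality $v \geq \f$ on $B$ together with the fact that $v = \f$ on $\partial B$.
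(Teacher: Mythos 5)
Your proposal is correct and takes essentially the same route as the paper, whose proof of Proposition \ref{prop:Bo} is literally ``the proof goes exactly as that of Proposition \ref{prop:Semmes}'': barriers $\f_0\pm At$, $\f_1\mp A(t-1)$, upper semi-continuous regularization to identify $\f^*=\f$, classical balayage for $(\omega+dd^c\f)^{n+1}=0$, maximum/comparison principle for uniqueness, and translation invariance in $\Im z$. The only gloss is that the interior bound $|\dot\f|\leq A$ does not follow from the barrier inequalities alone (they only control the one-sided derivatives at $t=0,1$); as in the paper's proof of Proposition \ref{prop:Semmes}(ii) you also need the convexity of $t\mapsto \f_t(x)$, which follows from subharmonicity in $z$ together with the $\Im z$-invariance you already established.
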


The proof goes exactly as that  of Proposition \ref{prop:Semmes}.
The function $\f$ (or rather the path $\f_t \subset PSH(X,\omega) \cap L^{\infty}(X)$)
is called a {\it bounded geodesic} in  \cite{Bern13}.
We use the same terminology here, as 
it turns out that bounded geodesics are geodesics in the metric sense:

\begin{prop} \label{prop:bddmin}
Bounded geodesics are metric geodesics. More precisely, if 
$\f_0,\f_1$ are bounded $\omega$-psh functions and $\f(x,z)=\f_t(x)$ is the bounded
geodesic joining $\f_0$ to $\f_1$, then for all $t,s \in [0,1]$,
$$
d_p(\f_t,\f_s)=|t-s| \, d_p(\f_0,\f_1).
$$
\end{prop}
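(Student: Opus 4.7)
The plan is to reduce everything to the K\"ahler case via the approximation $\omega_\e = \omega + \e \omega_X$. I would let $(\f_t^\e)_{t \in [0,1]}$ denote the bounded $\omega_\e$-geodesic joining $\f_0$ and $\f_1$ provided by Proposition \ref{prop:Bo}. The inclusion $PSH(M,\omega) \subset PSH(M,\omega_\e)$ together with the envelope definition of bounded geodesics immediately yields, exactly as in the proof of Proposition \ref{cor:approx}, that $\f_u \leq \f_u^\e$, that $\e \mapsto \f_u^\e$ is decreasing, and that $\f_u^\e \searrow \f_u$ pointwise on $X \times [0,1]$ as $\e \searrow 0$; the barrier estimates from the proof of Proposition \ref{prop:Semmes} will moreover give a uniform $L^\infty$ bound on all these functions.

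The first step is to establish the identity in the K\"ahler class $\a_\e$. Restricting $(\f_u^\e)$ to any subinterval $[s,t] \subset [0,1]$ and reparametrizing to $[0,1]$ produces again a bounded $\omega_\e$-geodesic with endpoints $\f_s^\e, \f_t^\e$ (by uniqueness of the Dirichlet problem), so Berndtsson's constant-speed observation, extended to bounded potentials via Darvas' theory (compare \cite[Theorem 3.5]{Dar15} and the displayed identity used in the proof of Theorem \ref{thm:approx0}), will give
\begin{equation*}
d_{p,\e}(\f_s^\e, \f_t^\e) = |t-s| \, d_{p,\e}(\f_0, \f_1), \qquad s,t \in [0,1].
\end{equation*}
By Proposition \ref{quasi decreasing} the right-hand side converges to $|t-s| \, d_p(\f_0, \f_1)$ as $\e \to 0$.

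The second step is to transfer the equality to the left-hand side. The triangle inequality for $d_{p,\e}$ bounds $|d_{p,\e}(\f_s^\e, \f_t^\e) - d_{p,\e}(\f_s, \f_t)|$ by $d_{p,\e}(\f_s^\e, \f_s) + d_{p,\e}(\f_t^\e, \f_t)$, and it suffices to show each of these terms tends to zero. Since $\f_u \leq \f_u^\e$, applying \cite[Lemma 5.1]{Dar15} in the K\"ahler class $\a_\e$ will yield
\begin{equation*}
d_{p,\e}^p(\f_u, \f_u^\e) \leq V_\e^{-1} \int_X (\f_u^\e - \f_u)^p (\omega_\e + dd^c \f_u)^n.
\end{equation*}
The integrand is uniformly bounded and tends pointwise to zero; expanding the Monge--Amp\`ere form in powers of $\e$ reduces the right-hand side, modulo terms of order $O(\e)$, to an integral against the fixed Bedford--Taylor measure $(\omega + dd^c \f_u)^n$, and a standard dominated convergence argument will force the whole expression to vanish. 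Combining the two steps yields $d_{p,\e}(\f_s, \f_t) \to |t-s| \, d_p(\f_0, \f_1)$, and Definition \ref{def: bd} identifies the limit as $d_p(\f_s, \f_t)$.

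The main obstacle is this last convergence $d_{p,\e}(\f_u^\e, \f_u) \to 0$: it rests on the monotone decrease $\f_u^\e \searrow \f_u$ (a pure envelope argument, robust against the absence of higher regularity) coupled with a Bedford--Taylor continuity property for the Monge--Amp\`ere operator along the varying family of K\"ahler classes $\a_\e$. Once that is in place, the remainder is purely formal triangle-inequality bookkeeping combined with the already-established Proposition \ref{quasi decreasing}.
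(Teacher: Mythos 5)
Your argument is correct in substance but takes a different route from the paper's. The paper fixes $\e$, approximates the endpoints by potentials $\f_0^j,\f_1^k\in\H_{\omega_\e}$ decreasing to $\f_0,\f_1$, observes (comparison principle plus uniqueness in Proposition \ref{prop:Bo}) that the corresponding $\omega_\e$-geodesics decrease to the bounded geodesic, and then reads off the identity directly from Definition \ref{def: bd}, Proposition \ref{quasi decreasing} and the K\"ahler-case identity for the approximating geodesics; all the convergence work is absorbed into the very definition of $d_p$ on bounded potentials. You instead keep the bounded endpoints fixed, compare the $\omega$-geodesic with the $\omega_\e$-geodesics joining the \emph{same} endpoints (your monotonicity and pointwise convergence $\f_u^\e\searrow\f_u$ as $\e\searrow 0$ is indeed the same envelope argument as Proposition \ref{cor:approx}, with the barriers of Proposition \ref{prop:Semmes} giving the uniform bound), invoke the K\"ahler-case metric-geodesic identity for bounded endpoints, and then carry out the extra quantitative step $d_{p,\e}(\f_u^\e,\f_u)\to 0$ via \cite[Lemma 5.1]{Dar15} — used exactly as in Proposition \ref{quasi decreasing} — together with the expansion of $(\omega_\e+dd^c\f_u)^n$ in powers of $\e$ and monotone/dominated convergence against the fixed measure $(\omega+dd^c\f_u)^n$; this part is sound. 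What your route buys is an explicit comparison of the two geodesics and no discussion of which approximants compute $d_p(\f_t,\f_s)$ in Definition \ref{def: bd}; what it costs is a stronger K\"ahler-case input (bounded rather than smooth endpoints) and the additional estimate.

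One caution about how you justify the first step: the identity $d_{p,\e}(\f^\e_t,\f^\e_s)=|t-s|\,d_{p,\e}(\f_0,\f_1)$ for merely bounded endpoints should be quoted as Darvas' theorem that finite-energy (in particular bounded) geodesics are metric geodesics in $(\E^p(X,\omega_\e),d_{p,\e})$, and not derived from the endpoint-speed formula $d_{p,\e}^p(\f_0,\f_1)=V_\e^{-1}\int_X|\dot{\f}_0|^p(\omega_\e+dd^c\f_0)^n$ of \cite[Theorem 3.5]{Dar15} that you point to: the Remark immediately following Proposition \ref{prop:bddmin} shows this formula can fail when the endpoints are only bounded. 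With the correct reference (or by re-running the smooth-endpoint approximation, which essentially reproduces the paper's proof) the step is fine, and the rest of your bookkeeping goes through.
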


\begin{proof} 
Let $\f_0^{j}, \f_{1}^k \in \H_{\omega_\varepsilon}$ be sequences decreasing respectively to $\f_0, \f_1$. 
It follows from the comparison principle and the uniqueness in Proposition 
\ref{prop:Bo} that $\f_{t,j}$ decreases to $\f_{t}$ as $j$ increases to $+\infty$.
From Definition \ref{def: bd}, Proposition \ref{quasi decreasing} and the fact that the identity in the statement holds in the K\"ahler setting for $d_\varepsilon$ we obtain
\begin{eqnarray*}
  d_p \left( \f_{t}, \f_{s} \right) &=&\liminf_{\varepsilon\rightarrow 0} \liminf_{j,k\rightarrow +\infty} d_{p,\varepsilon} (\f_{t,j}, \varphi_{s,k})\\
  &=& |t-s|\liminf_{\varepsilon\rightarrow 0}\liminf_{j,k\rightarrow +\infty} d_{p,\varepsilon} (\f_{0}^j, \varphi_{1}^k)= |t-s| d_p(\f_0, \f_1).
\end{eqnarray*}
\end{proof}

\begin{rem}
One can no longer expect that $d_p(\f_0,\f_1)^p = \int_X \left| \dot{\f}_{t} \right|^p MA(\f_t)$
for a.e. $t \in [0,1]$ as simple examples show. One can e.g. take $\f_0\equiv 0$ and $\f_1=\max(u,0)$, where
$u$ takes positive values, has isolated singularities and solves $MA(u)=$Dirac mass at some point: in this case
$MA(\f_1)$ is concentrated on the contact set $(u=0)$ while $\dot{\f}_{1} \equiv 0$ on this set
hence
$
\int_X \left|\dot{\f}_{1} \right|^p MA(\f_1)=0.
$
We thank T.Darvas for pointing this to us.
\end{rem}
As the above remark points out we do not have that $d_p^p(\f_0, \f_1)= I(0)=I(1)$ when $\f_0, \f_1$ are just bounded $\omega$-psh functions. Nevertheless we can still recover the formula in some special cases.

We start by recalling the following:

\begin{thm}\label{BD}
Let $f$ be a continuous function such that $dd^c f\leq C\omega_X $ on $X$, for some $C>0$. Then $P(f) $ has bounded laplacian on $\Amp(\{\omega\})$ and
\begin{equation}\label{BD_MA}
(\omega+dd^c P_{\omega}(f))^n= \mathbbm{1}_{\{P_{\omega}(f)=f\}} (\omega+dd^c f)^n.
\end{equation}
\end{thm}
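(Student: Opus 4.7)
The plan is to deduce this from the Kähler version of the envelope regularity theorem (as in \cite{BD12}) via the standard approximation $\omega_\varepsilon = \omega + \varepsilon\omega_X$, together with a localisation on the ample locus. Fix a relatively compact open set $U \Subset \Amp(\{\omega\})$; on $U$ the form $\omega$ is honestly Kähler with $\omega \geq c_U\omega_X$ for some $c_U > 0$, so all estimates will be performed locally and then pieced together by exhausting $\Amp(\{\omega\})$.

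For each $\varepsilon > 0$ the form $\omega_\varepsilon$ is Kähler on $X$, and the hypothesis $dd^c f \leq C\omega_X \leq C\varepsilon^{-1}\omega_\varepsilon$ places $f$ in the range of applicability of the Kähler Berman--Demailly theorem. Thus $u_\varepsilon := P_{\omega_\varepsilon}(f)$ has bounded Laplacian and
$$(\omega_\varepsilon + dd^c u_\varepsilon)^n = \mathbbm{1}_{\{u_\varepsilon = f\}}(\omega_\varepsilon + dd^c f)^n.$$
From $PSH(X,\omega) \subset PSH(X,\omega_\varepsilon)$ one gets $u_\varepsilon \geq P_\omega(f)$ and monotonicity in $\varepsilon$, while testing against an arbitrary $v \in PSH(X,\omega)$ lying below $f$ shows $v \leq u_\varepsilon$ for every $\varepsilon$; so $u_\varepsilon \searrow P_\omega(f)$ pointwise.

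To pass the bounded-Laplacian property to the limit I would approximate $u_\varepsilon$ by the smooth Monge--Ampère solutions $u_{\varepsilon,\beta}$ of $(\omega_\varepsilon + dd^c u)^n = e^{\beta(u-f)}\omega_X^n$ à la Berman and run the maximum-principle estimate of \cite{BD12} on $U$. Since $\omega_\varepsilon \geq \omega \geq c_U\omega_X$ on $U$, the background metric provides a positivity estimate independent of $\varepsilon$, and the upper bound $dd^c f \leq C\omega_X$ supplies exactly the curvature term needed to close the Laplacian estimate $\Delta u_{\varepsilon,\beta} \leq C(U)$, uniformly in $\beta$ and $\varepsilon$. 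Sending $\beta \to \infty$ and then $\varepsilon \to 0$ yields a bound on $\Delta P_\omega(f)$ on $U$, hence on all of $\Amp(\{\omega\})$.

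For the Monge--Ampère identity, Bedford--Taylor continuity along the decreasing sequence gives $(\omega_\varepsilon + dd^c u_\varepsilon)^n \rightharpoonup (\omega + dd^c P_\omega(f))^n$. Off the contact set $P_\omega(f)$ is locally a supremum of $\omega$-psh minorants with room to push up, hence maximal by a standard balayage argument, so $(\omega+dd^c P_\omega(f))^n = 0$ on $\{P_\omega(f) < f\}$. On the contact set both $P_\omega(f)$ and $f$ now have bounded Laplacian on $U$ with $P_\omega(f)\leq f$ and equality, so a Lebesgue-density/Calderón-type argument gives $dd^c P_\omega(f) = dd^c f$ almost everywhere on $\{P_\omega(f) = f\}$, which yields \eqref{BD_MA}. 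The main obstacle is precisely combining these two pieces: the uniform Laplacian estimate must survive the double limit $\beta \to \infty$, $\varepsilon \to 0$ despite the degeneration of positivity outside $\Amp(\{\omega\})$, and the a.e.\ identification of $dd^c P_\omega(f)$ with $dd^c f$ on the contact set has to be carried out carefully since the sets $\{u_\varepsilon = f\}$ need not converge monotonically to $\{P_\omega(f) = f\}$.
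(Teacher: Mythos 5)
Your strategy (obstacle/zero-temperature approximation in the spirit of Berman) is the right circle of ideas, but it is not the paper's route and, as written, it has a genuine gap at its crucial step. The paper does not regularize $\omega$ by K\"ahler forms here at all: it invokes \cite[Theorem 1.2]{Ber}, which is proved directly for a semi-positive and big form, observing that the only estimate on $f$ used in Berman's argument is the upper bound $dd^c f\le C\omega_X$ (this observation is also needed in your route, since \cite{BD12} is stated for regular obstacles), and then obtains \eqref{BD_MA} arguing as in \cite[Theorem 9.25]{GZ17}. Your detour through $\omega_\varepsilon=\omega+\varepsilon\omega_X$ transfers all the difficulty into the uniformity in $\varepsilon$ of the Laplacian bound, and that is exactly where the proposal is incomplete: the constant produced by the K\"ahler envelope theorem for $\omega_\varepsilon$ blows up as $\varepsilon\to 0$, and one cannot recover an $\varepsilon$-independent bound on $U\Subset\Amp(\{\omega\})$ merely from $\omega\ge c_U\omega_X$ on $U$, because the maximum principle applied to the usual test quantity (say $\log \mathrm{tr}_{\omega_X}(\omega_\varepsilon+dd^c u_{\varepsilon,\beta})-Au_{\varepsilon,\beta}$) is global: the maximum may occur outside $U$, where the positivity of $\omega_\varepsilon$ degenerates with $\varepsilon$. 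The known way to localize on the ample locus (and what Berman, and Berman--Demailly in the big case, actually do) is to twist the test function by a quasi-psh weight $\psi$ with analytic singularities along $X\setminus\Amp(\{\omega\})$ satisfying $\omega+dd^c\psi\ge\delta\omega_X$, which yields an estimate of the form $\Delta u\le Ce^{-A\psi}$, locally bounded on $\Amp(\{\omega\})$. You explicitly flag this double limit as ``the main obstacle'' but do not resolve it, so the bounded-Laplacian assertion is not established by your sketch.

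A second, smaller inaccuracy concerns the contact-set identification: you assert that both $P_\omega(f)$ and $f$ have bounded Laplacian on $U$. The hypothesis only gives $dd^c f\le C\omega_X$, i.e. an upper bound on $\Delta f$; $f$ is merely continuous and its Laplacian can be unbounded below, so $f\notin W^{2,p}_{loc}$ in general. The a.e. identification of the measures on $\{P_\omega(f)=f\}$ must therefore be run as in \cite[Theorem 9.25]{GZ17}: use that $P_\omega(f)$ has locally bounded Laplacian (hence lies in $W^{2,p}_{loc}$ for all $p$, and is twice differentiable a.e.) and that $f-P_\omega(f)\ge 0$ is continuous, vanishes on the contact set and has Laplacian bounded above; the vanishing of the Monge--Amp\`ere measure of the envelope on $\{P_\omega(f)<f\}$ by balayage is fine as you state it.
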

The fact that $P(f)$ has locally bounded laplacian in $\Amp(\{\omega\})$ is essentially \cite[Theorem 1.2]{Ber}. We do not assume here that $f$ is smooth but one can check that the upper bound on $dd^c f$ is the only estimate needed in order to pursue Berman's approach. One can then argue as in \cite[Theorem 9.25]{GZ17} to get identity \eqref{BD_MA}.


\noindent Set $$\mathcal{H}_{bd}:=\{\f\in \psh(X, \omega) \cap L^\infty(X),\; \f=P_{\omega}(f)\; {\rm for\, some}\; f\in C^0(X)\, {\rm with}\, dd^c f\leq C\omega_X,\; C>0\}.$$

\begin{thm}\label{thm:magicFormulaBdd}
Assume that $\varphi_{0}, \varphi_1\in \mathcal{H}_{bd} $. Let $\f_t$ be the  Mabuchi geodesic joining $\f_0$ and $\f_1$. Then 
\begin{equation}\label{star}
{d}_p^p(\f_0, \f_1)= \int_X |\dot{\f}_0|^p MA(\f_0)= \int_X |\dot{\f}_1|^p MA(\f_1).
\end{equation}
\end{thm}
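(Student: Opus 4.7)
\emph{Proof proposal.}
By symmetry, applying the (yet unproved) first equality to the time-reversed Mabuchi geodesic $t \mapsto \varphi_{1-t}$ together with $d_p(\varphi_0,\varphi_1) = d_p(\varphi_1,\varphi_0)$ yields the second. So it suffices to prove
$$ d_p^p(\varphi_0,\varphi_1) = \int_X |\dot{\varphi}_0|^p MA(\varphi_0). $$
The plan is a double approximation that reduces the identity to the Kähler setting, where it is known by Theorem~\ref{thm:approx0} (via \cite[Thm.~3.5]{Dar15}). Write $\varphi_i = P_\omega(f_i)$ with $f_i$ continuous and $dd^c f_i \leq C \omega_X$, pick smooth decreasing sequences $f_i^j \searrow f_i$, and set $\varphi_i^{\varepsilon,j} := P_{\omega_\varepsilon}(f_i^j)$. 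By Theorem~\ref{BD} these envelopes have bounded Laplacian on $\Amp(\{\omega\})$; after a small additional regularization (or by approximation \emph{within} $\H_{\omega_\varepsilon}$ in the sense of Definition~\ref{def: bd}) we may treat them as elements of $\H_{\omega_\varepsilon}$ and apply the Darvas formula to obtain
$$ d_{p,\varepsilon}^p(\varphi_0^{\varepsilon,j},\varphi_1^{\varepsilon,k}) = \int_X \bigl|\dot{\varphi}_0^{\varepsilon,j,k}\bigr|^p MA_\varepsilon(\varphi_0^{\varepsilon,j}), $$
where $\varphi_t^{\varepsilon,j,k}$ denotes the Mabuchi geodesic joining the approximants in $\H_{\omega_\varepsilon}$.

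Passing to the limit on the left-hand side is granted by Definition~\ref{def: bd} and Proposition~\ref{quasi decreasing}, so the task is to control the right-hand side. The monotonicity argument from the proof of Theorem~\ref{thm:approx0} (combining Proposition~\ref{cor:approx} with the convexity of $t \mapsto \varphi_t^{\varepsilon,j,k}(x)$) shows that $\dot{\varphi}_0^{\varepsilon,j,k}$ converges pointwise on $\Amp(\{\omega\})$ to $\dot{\varphi}_0^+$ and is uniformly bounded by $\|\varphi_0 - \varphi_1\|_{L^\infty} + o(1)$ via the barrier bound in Proposition~\ref{prop:Semmes}(ii). Theorem~\ref{BD} identifies
$$ MA_\varepsilon(\varphi_0^{\varepsilon,j}) = \mathbbm{1}_{\{\varphi_0^{\varepsilon,j} = f_0^j\}}\,(\omega_\varepsilon + dd^c f_0^j)^n, $$
i.e.\ the restriction to the contact set of a \emph{smooth} measure whose density converges locally uniformly on $\Amp(\{\omega\})$ to that of $(\omega + dd^c f_0)^n$. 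Combining pointwise convergence of the integrand, uniform dominating bounds, and identification of the contact sets in the limit as $\{\varphi_0 = f_0\}$, dominated convergence yields
$$ \int_X \bigl|\dot{\varphi}_0^{\varepsilon,j,k}\bigr|^p MA_\varepsilon(\varphi_0^{\varepsilon,j}) \longrightarrow \int_X |\dot{\varphi}_0|^p \mathbbm{1}_{\{\varphi_0 = f_0\}}(\omega + dd^c f_0)^n = \int_X |\dot{\varphi}_0|^p MA(\varphi_0), $$
the last equality being a second application of Theorem~\ref{BD}.

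The main obstacle is precisely the last step: weak convergence of Monge--Ampère measures is too weak to integrate a merely pointwise-converging integrand, and the proof genuinely requires recasting $MA_\varepsilon(\varphi_0^{\varepsilon,j})$ and $MA(\varphi_0)$ as restrictions of absolutely continuous measures with uniformly converging smooth densities. This is exactly where the envelope hypothesis $\varphi_i \in \mathcal{H}_{bd}$ enters---the counterexample in the remark preceding the theorem shows that for a general bounded $\omega$-psh potential the mass of $MA(\varphi_0)$ can be concentrated on the locus where $\dot{\varphi}_0$ vanishes, so no such identity can hold. A minor technical simplification is to first treat the ordered case $\varphi_0 \leq \varphi_1$ (reducible via Lemma~\ref{lem:translate}) and then recover the general case through the Pythagorean decomposition used in the proof of Proposition~\ref{quasi decreasing}.
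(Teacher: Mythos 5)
Your proposal follows essentially the same route as the paper's proof: approximate by the envelopes $P_{\omega_\varepsilon}(f_i)$, apply Darvas's K\"ahler formula together with Theorem~\ref{BD} to rewrite the Monge--Amp\`ere measures as restrictions of smooth measures to contact sets, check pointwise convergence of the initial tangent vectors and of the densities (using the monotone inclusion of the contact sets), and conclude by dominated convergence. The only deviations --- the extra smoothing layer $f_i^j \searrow f_i$ and the suggested reduction to the ordered case via the Pythagorean formula --- are unnecessary, since Theorem~\ref{BD} already applies to continuous $f$ with $dd^c f \leq C\omega_X$ and the paper's argument never needs $\varphi_0 \leq \varphi_1$.
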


\begin{proof}

Set $\varphi_{0,\e}:=P_{\omega_{\e}}(f_0)$ and $\varphi_{1,\e}:=P_{\omega_{\e}}(f_1)$. Clearly $\varphi_{i,\e}$ decreases pointwise to $\varphi_i$, $i=1,2$. Let $\f_t^\e$ be the $\omega_\e$-geodesic joining $\varphi_{0,\e} $ and $\varphi_{1,\e}$. Combining  \cite[Theorem 3.5]{Dar15} together with  \eqref{BD_MA} we get
$$
V_\varepsilon d_{p,\e}^p(\varphi_{0,\e},\varphi_{1,\e}) = \int_X \left|\dot{\varphi}^\e_{0}\right|^p (\omega_{\e} +dd^c \varphi_{0,\e})^n=\int_{\{\varphi_{0,\e}=f_0\}} \left|\dot{\varphi}_{0}^\e\right|^p (\omega_{\e} +dd^c f_0)^n.
$$
Set $D_{\e}:= \{\varphi_{0,\e}=f_0\}$, $D_0:= \{\varphi_{0}=f_0\}$ and observe that $D_0 \subseteq D_\e$. Since $\f_{0, \e}=P_{\omega_\e}(f)$ and $\f_0= P_\omega(f)$, Theorem \ref{BD} insures that $ (\omega_{\e} +dd^c \varphi_{0,\e})^n= g_\e \omega_X^n$ and $ (\omega +dd^c \varphi_{0})^n= g_0 \omega_X^n$ where $g_\e, g_0$ are defined as
$$
g_\e:= 
\begin{cases}
0 \qquad\qquad \qquad\quad  x\notin D_\e
\\
\frac{(\omega_{\e} +dd^c f_0)^n}{\omega_X^n}  \,\quad \quad x\in D_\e
\end{cases}
\quad 
g_0:= 
\begin{cases}
0 \qquad\qquad \qquad  x\notin D_0
\\
\frac{(\omega +dd^c f_{0})^n}{\omega_X^n}  \,\quad \quad x\in D_0
\end{cases}
$$
We claim that $g_\e$ converges pointwise to $g_0$. Indeed, when $x\in D_0\subseteq D_\e$ then 
$g_\e(x)= \frac{(\omega_{\e} +dd^c f_0)^n}{\omega_X^n} (x)$ converges to $\frac{(\omega +dd^c f_0)^n}{\omega_X^n} (x)= g_0(x) $ as $\e$ goes to $0$. In the case when $x\notin D_0$, i.e. $\f_0(x)< f_0(x)$, since $\f_\e(x) $ decreases to $\f_0(x) $ as $\e$ goes to zero, we can infer that for $\e$ sufficiently small we still have $\f_\e(x)< f_0(x)$ that means $x\notin D_\e$. Hence $g_\e(x)=0=g_0(x)$. The claim is then proved.

Since $\mathbbm{1}_{D_\e}\varphi^\e_{0}= f_0=\mathbbm{1}_{D_0}\varphi_{0} $, the same arguments in Theorem \ref{thm:approx0} show that $\left|\mathbbm{1}_{D_\e}\dot{\varphi}^\e_{0}- \mathbbm{1}_{D_0}\dot{\varphi}_{0}\right|$ converges pointwise to $0$ as $\e$ goes to zero.\\
We thus infer that $\mathbbm{1}_{D_\e} |\dot{\f_0^\e}|^p g_\e$ converges pointwise to $\mathbbm{1}_{D_0} |\dot{\f_0}|^p g_0$ as $\e\rightarrow 0$. The dominated convergence theorem yields 
$$\lim_{\e\rightarrow 0} d_{p,\e}^p (\f_0, \f_1)= \lim_{\e\rightarrow 0} \int_X  \mathbbm{1}_{D_\e}\left|\dot{\varphi}^\e_{0}\right|^p (\omega_{\e} +dd^c \varphi_{0,\e})^n= \int_X \mathbbm{1}_{D_0}\left|\dot{\varphi}_{0}\right|^p (\omega +dd^c \varphi_{0})^n,$$
hence the conclusion.
\end{proof}

Observe that if $\f_0, \f_1\in \mathcal{H}_\omega$, then $\f_0\vee \f_1\in \mathcal{H}_{bd}$. Indeed since $\f_0,\f_1$ are smooth, the functions $-\f_0,-\f_1$ are quasi-plurisubharmonic, i.e. there exists $C>0$ such that $dd^c (-\f_i) \geq -C\omega_X$ for any $i=1,2$. Thus $\min(\f_0, \f_1)=-\max(-\f_0, -\f_1)$ is such that
$$dd^c \min(\f_0, \f_1)=-dd^c \max(-\f_0, -\f_1)\leq C \omega_X.$$
In particular the equality \eqref{star} holds for $d_p(\f_0,\f_0\vee \f_1 )$ and $d_p(\f_1,\f_0\vee \f_1 )$.

\section{Finite energy classes} \label{sec:energy}

We define in this section the
set ${\mathcal E}(\a)$ (resp. ${\mathcal E}^p(\a)$)
of positive closed currents $T=\omega+dd^c \f$ with full Monge-Amp\`ere mass 
(resp. finite weighted energy) in $\a$,
by defining the corresponding class ${\mathcal E}(X,\omega)$ 
(resp. ${\mathcal E}^p(X,\omega)$ ) of finite energy potentials $\f$.

\subsection{The space ${\mathcal E}(\a)$}

\subsubsection{Quasi-plurisubharmonic functions}\label{qpsh}
Recall that a function is quasi-plurisub\-harmonic if it is locally given as the sum of  a smooth and a psh function. 
In particular quasi-psh (\emph{qpsh} for short) functions are upper semi-continuous and integrable.
%

\begin{defi}
We let $PSH(X,\omega)$ denote the set of all $\omega$-plurisubharmonic functions. 
These are quasi-psh functions
$\f:X \rightarrow \R \cup \{-\infty\}$ such that
$$
\omega+dd^c \f \geq 0
$$
in the weak sense of currents. 
\end{defi}

The set $PSH(X,\omega)$ is a closed subset of $L^1(X)$, 
for the $L^1$-topology. 

\pagebreak[3]

%
%



\subsubsection{The class ${\mathcal E}(X,\omega)$}

Given   $\f \in PSH(X,\omega)$, we consider  
$$
\f_j:=\max(\f, -j) \in PSH(X,\omega) \cap L^{\infty}(X).
$$
It follows from the Bedford-Taylor theory \cite{BT82} that the $MA(\f_j)$'s are well defined probability measures. Moreover, the sequence
$
\mu_j:={\bf 1}_{\{ \f>-j\}} MA(\f_j)
$
is increasing \cite[p.445]{GZ07}. Since the $\mu_j$'s all have total mass bounded from above by $1$,
we consider
$$
\mu_{\f}:=\lim_{j \rightarrow +\infty} \mu_j,
$$
which is a positive Borel measure on $X$, with total mass $\leq 1$.

%
%


\begin{defi}
We set 
$$
{\mathcal E}(X,\omega):=\left\{ \f \in PSH(X,\omega) \; | \; \mu_{\f}(X)=1 \right\}.
$$
For $\f \in {\mathcal E}(X,\omega)$, we set
$
MA(\f):=\mu_{\f}.
$
\end{defi}

%
%
%
%
%

The latter can be characterized as the largest class for which the complex Monge-Amp\`ere is well defined and the maximum principle holds \cite[Theorem 1.5]{GZ07}. We further note
that the {\it domination principle} holds (\cite[Corollary 2.5]{BEGZ10}, \cite[Proposition 2.4]{DDL17}:

\begin{prop} \label{pro:dp}
If $\f,\p \in {\mathcal E}(X,\om)$ are such that 
$$
\f(x) \leq \p(x)
\text{ for } MA(\p)-\text{a.e. } x,
$$
then $\f(x) \leq \p(x)$ for all $x \in X$.
\end{prop}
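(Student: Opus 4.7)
The plan is to combine the comparison principle in $\E(X,\omega)$ with the plurifine locality of the non-pluripolar Monge-Amp\`ere operator, reducing the statement to a uniqueness principle.

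First, I apply the comparison principle for $\E$ (cf.\ \cite[Theorem 1.5]{GZ07}, extended to $\E$ in \cite{BEGZ10}) to the pair $(\p,\f-\e)$: for each $\e>0$,
\[
  \int_{\{\p<\f-\e\}} MA(\f) \;\leq\; \int_{\{\p<\f-\e\}} MA(\p) \;\leq\; MA(\p)\bigl(\{\f>\p\}\bigr) \;=\; 0,
\]
where the last equality is the hypothesis. Letting $\e\to 0^+$ and using that $\{\f>\p+\e\}\uparrow\{\f>\p\}$ yields $MA(\f)(\{\f>\p\})=0$, so the inequality $\f\leq\p$ also holds $MA(\f)$-almost everywhere.

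Next, I set $h:=\max(\f,\p)$. Since $h\geq\p\in\E$, we have $h\in\E(X,\omega)$. Because $\f-\p$ is quasi-continuous, $\{\f>\p\}$ is plurifine open, so plurifine locality of the non-pluripolar Monge-Amp\`ere operator gives $\mathbf{1}_{\{\f>\p\}} MA(h)=\mathbf{1}_{\{\f>\p\}} MA(\f)$, which vanishes by the first step. Since $h=\p$ off $\{\f>\p\}$,
\[
  \int_X (h-\p)\,MA(h) \;=\; \int_{\{\f>\p\}}(\f-\p)\,MA(h) \;=\; 0,
\]
and non-negativity of the integrand forces $h=\p$ $MA(h)$-almost everywhere.

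To upgrade this to pointwise equality, I would use a perturbation argument combined with uniqueness of solutions of the Monge-Amp\`ere equation in $\E$. For all but countably many $\delta>0$, the level set $\{\f-\p=\delta\}$ is $MA(\p)$-null (a coarea observation); on the plurifine open sets $\{\f<\p+\delta\}$ (where $\max(\f-\delta,\p)=\p$) and $\{\f>\p+\delta\}$ (where the MA measure vanishes by Step~1), plurifine locality together with the conservation of total mass forces $MA(\max(\f-\delta,\p))=MA(\p)$ as measures on $X$. Dinew's uniqueness theorem in $\E(X,\omega)$ then produces $\max(\f-\delta,\p)=\p+c_\delta$ with $c_\delta\geq 0$. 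Passing to the limit $\delta\to 0^+$ yields $h=\p+c$ with $c\geq 0$, and $c>0$ would force $\f\equiv\p+c>\p$ on the non-pluripolar locus $\{\p>-\infty\}$ — contradicting $MA(\p)(\{\f>\p\})=0$ since $MA(\p)(X)=1$. Hence $c=0$, $h=\p$, and $\f\leq\p$ on $X$.

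The main technical hurdle is precisely the last step: handling the contact set $\{\f=\p\}$, where plurifine locality does not apply directly. The perturbation in $\delta$ circumvents this at the cost of invoking Dinew's uniqueness. An alternative route, avoiding Dinew, would be a capacity argument showing that $\{\f>\p+\e\}$ has zero Monge-Amp\`ere capacity (hence is pluripolar) for every $\e>0$, combined with the fact that two $\omega$-psh functions agreeing outside a pluripolar set coincide pointwise via the submean-value property.
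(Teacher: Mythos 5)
Your argument is correct, and it is worth noting that the paper itself does not prove this proposition at all: it is quoted from the literature (\cite[Corollary 2.5]{BEGZ10}, \cite[Proposition 2.4]{DDL17}), so any complete argument is already "a different route". Your Step 1 (comparison principle applied to $\p$ and $\f-\e$, then $\e\to 0$) is the standard opening and gives $MA(\f)(\{\f>\p\})=0$; the real content is your Step 3, and it does work: for all but countably many $\delta>0$ the sets $\{\f-\p=\delta\}$, being pairwise disjoint, are $MA(\p)$-null (this is just disjointness plus finiteness of the measure — the word ``coarea'' is unnecessary), and then with $v_\delta:=\max(\f-\delta,\p)\in\E(X,\omega)$ the plurifine locality on $\{\f-\delta>\p\}$ and $\{\f-\delta<\p\}$, together with $MA(v_\delta)(X)=MA(\p)(X)=1$ and $MA(\p)(\{\f-\delta\geq\p\})=0$, forces $MA(v_\delta)(\{\f-\delta=\p\})=0$ and hence $MA(v_\delta)=MA(\p)$; uniqueness then yields $v_\delta=\p+c_\delta$, and the limit/contradiction argument with $MA(\p)(\{\p=-\infty\})=0$ is fine. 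Two remarks. First, the key external input is the uniqueness theorem for full-mass solutions, which is a much deeper result than the domination principle itself; in this paper $\omega=\pi^*\omega_Y$ is only semi-positive and big, so you should invoke the big-class uniqueness (the uniqueness part of \cite[Theorem A]{BEGZ10}, which adapts Dinew's argument) rather than Dinew's theorem, whose statement is for K\"ahler forms — this is a citation-level fix, not a gap, but one should also make sure the uniqueness proof one cites does not itself rest on the domination principle (Dinew's and BEGZ's arguments do not). Second, your Step 2 is superfluous (Step 3 never uses it), and the standard proofs in the cited references get by with the comparison principle plus an auxiliary potential/capacity argument, avoiding uniqueness altogether — essentially the ``alternative route'' you sketch at the end, which is the more elementary and self-contained option; what your route buys instead is a very short and transparent treatment of the contact set $\{\f=\p\}$ at the price of a heavy hammer.
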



\smallskip

It follows from the $\partial\overline{\partial}$-lemma that any positive closed current 
$T \in \a$ can be written $T=\omega+dd^c \f$ for some function $\f \in PSH(X,\omega)$
which is unique up to an additive constant. 

\begin{defi}
We let ${\mathcal E}(\a)$ denote the set of all positive currents in $\a$,
$T=\omega+dd^c \f$,
with $\f \in {\mathcal E}(X,\omega)$.
\end{defi}

\noindent Note that the definition above does not depend on the choice of $\omega$, nor on the choice of  $\f$.

\subsection{The class ${\mathcal E}^1(X,\omega)$}

\subsubsection{The Aubin-Mabuchi functional}\label{AubinMabuchi_functional}

Each tangent space $T_{\f}\HH_\omega$ admits the following orthogonal decomposition
\begin{equation*}
    T_{\f}\HH_\omega = \set{\psi \in \Cinf(X) ; \: \beta_{\f}(\psi) = 0}\oplus \Real ,
\end{equation*}
where $\beta=MA$ is the 1-form defined on $\HH$ by
\begin{equation*}
    \beta_{\f}(\psi) = \int_{X}\psi \,MA(\f).
\end{equation*}

It is a classical observation due to Mabuchi  that the 1-form $\beta$ is closed. 
Therefore there exists a unique function $E$ defined on the convex open set $\HH_\omega$, such that $\beta = dE$ and $E(0) = 0$. It is often called the \emph{Aubin-Mabuchi functional} and can be expressed (after integration along affine paths) by
\begin{equation*}
    E(\f) = \frac{1}{(n+1)V_\a} \sum_{j=0}^{n} \int_{X}\f \,  
(\omega+dd^c  \f)^{j}  \wedge   \omega^{n-j}   .
\end{equation*}

\begin{lem}  \label{lem:AubinMabuchi1}
The Aubin-Mabuchi functional $E$ is concave along euclidean segments, non-decreasing, and 
satisfies the cocycle condition
$$
E(\f)-E(\p)=\frac{1}{(n+1)V_\a} \sum_{j=0}^{n} \int_{X} (\f-\p) \,  
(\omega+dd^c  \f)^{j}  \wedge   (\omega+dd^c \p)^{n-j} 
$$

It is {affine} along geodesics and convex along subgeodesics in $\HH_\omega$.
\end{lem}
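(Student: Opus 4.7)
The plan is to derive everything from the fact that $\beta = dE$ with $\beta_\phi(\psi) = \int_X \psi \, MA(\phi)$, combined with direct second-derivative computations and integration by parts.

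\textbf{Explicit formula and cocycle.} First, I would integrate $\beta$ along the affine path $s\phi$, $s\in[0,1]$, to obtain $E(\phi) = \int_0^1 \int_X \phi\, MA(s\phi)\, ds$. Expanding $(\omega+s\, dd^c\phi)^n$ binomially and computing the resulting beta-function integrals $\int_0^1 s^j(1-s)^{n-j}\,ds = j!(n-j)!/(n+1)!$ yields the claimed sum after re-indexing. The same argument applied along the affine segment $\phi_s = \psi + s(\phi-\psi)$ gives
\[
E(\phi)-E(\psi)=\int_0^1\int_X(\phi-\psi)\,MA(\phi_s)\,ds,
\]
and expanding $\omega_{\phi_s}^n=\big((1-s)\omega_\psi + s\omega_\phi\big)^n$ binomially produces the cocycle formula.

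\textbf{Monotonicity.} If $\phi\geq\psi$ in $\mathcal{H}_\omega$, every term $\int_X(\phi-\psi)\,\omega_\phi^j\wedge\omega_\psi^{n-j}$ in the cocycle expansion is non-negative (integral of a non-negative function against a positive measure), so $E(\phi)\geq E(\psi)$.

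\textbf{Concavity along euclidean segments.} For $\phi_t=(1-t)\phi_0+t\phi_1$ set $\eta=\phi_1-\phi_0$. Then $\dot E(\phi_t)=\int_X\eta\,MA(\phi_t)$ and differentiating again, using $\frac{d}{dt}\omega_{\phi_t}^n = n\, dd^c\eta\wedge\omega_{\phi_t}^{n-1}$ and integration by parts, gives
\[
\ddot E(\phi_t)=\frac{n}{V_\alpha}\int_X \eta\, dd^c\eta\wedge \omega_{\phi_t}^{n-1}=-\frac{n}{V_\alpha}\int_X d\eta\wedge d^c\eta\wedge\omega_{\phi_t}^{n-1}\leq 0.
\]

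\textbf{Subgeodesics and geodesics.} For a smooth path $(\phi_t)\subset\mathcal{H}_\omega$, the same computation with $\eta$ replaced by $\dot\phi_t$ (and keeping the $\ddot\phi_t$ term) yields
\[
\ddot E(\phi_t) = \frac{1}{V_\alpha}\int_X\Big(\ddot\phi_t\,\omega_{\phi_t}^n - n\, d\dot\phi_t\wedge d^c\dot\phi_t\wedge \omega_{\phi_t}^{n-1}\Big).
\]
For the geodesic equation \eqref{equ:geodesic_equation} the integrand vanishes pointwise, so $E(\phi_t)$ is affine. For a subgeodesic, the $\omega$-plurisubharmonicity of the associated function on $X\times S$ (which only depends on $t=\Re z$) translates into the pointwise inequality $\ddot\phi_t\,\omega_{\phi_t}^n \geq n\, d\dot\phi_t\wedge d^c\dot\phi_t\wedge \omega_{\phi_t}^{n-1}$, so $\ddot E(\phi_t)\geq 0$ and $E$ is convex.

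\textbf{Main obstacle.} The only non-routine step is justifying the pointwise subgeodesic inequality from the global condition $\omega+dd^c_{x,z}\phi\geq 0$ on $X\times S$ for an $s$-independent function. I would verify it by computing $(\omega+dd^c_{x,z}\phi)^{n+1}$ in coordinates: writing $dd^c_{x,z}\phi = dd^c_x\phi_t + dt\wedge d^c_x\dot\phi_t + d^c_xt\wedge d\dot\phi_t + \frac{1}{2\pi}\ddot\phi_t\, dt\wedge ds$ and extracting the coefficient of the volume form $\omega_{\phi_t}^n\wedge dt\wedge ds$ yields exactly $\ddot\phi_t\,\omega_{\phi_t}^n - n\,d\dot\phi_t\wedge d^c\dot\phi_t\wedge\omega_{\phi_t}^{n-1}$, whose non-negativity is equivalent to $\phi$ being $\omega$-psh on $X\times S$ (and zero in the geodesic case by \eqref{Semmes}).
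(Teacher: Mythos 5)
Your derivation of the explicit formula, the cocycle identity, monotonicity, and concavity along euclidean segments is fine and matches the paper's (heuristic) computation: for euclidean segments the path is genuinely smooth, so differentiating under the integral and integrating by parts is legitimate. The gap is in the geodesic/subgeodesic part. In this paper a Mabuchi geodesic is \emph{not} a smooth path in $\HH_\omega$: it is the upper envelope of Definition \ref{defi: geodesic}, and by Proposition \ref{prop:Semmes} the associated function on $X\times S$ is merely a bounded $\omega$-psh function, Lipschitz in $t=\Re z$ (continuous only on $\Amp(\a)\times\bar S$); a subgeodesic is an arbitrary $\omega$-psh function on $X\times S$. Hence $\ddot\f_t$ does not exist pointwise, $d\dot\f_t\wedge d^c\dot\f_t\wedge\omega_{\f_t}^{n-1}$ has no classical meaning, the equation $(\omega+dd^c_{x,z}\f)^{n+1}=0$ holds only as an identity of Bedford--Taylor measures which cannot be split coordinatewise into ``$\ddot\f_t\,\omega_{\f_t}^n - n\,d\dot\f_t\wedge d^c\dot\f_t\wedge\omega_{\f_t}^{n-1}$'', and one cannot differentiate $E(\f_t)$ twice under the integral sign. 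Your ``main obstacle'' paragraph addresses the wrong obstacle: the coordinate computation is valid only for smooth $s$-invariant functions, which the geodesic is not (this is exactly what the paper flags with ``these computations are merely heuristic''). A smaller slip: non-negativity of the coefficient of the top power is \emph{implied by}, but not \emph{equivalent to}, $\omega$-plurisubharmonicity on $X\times S$; fortunately only the implication you need is true.

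The paper closes this gap by approximation: one replaces $\omega$ by the K\"ahler forms $\omega_\e=\omega+\e\omega_X$, takes the $\e$-geodesics $\f_t^\e$ joining the same endpoints, invokes the known K\"ahler-case statement that $t\mapsto E_{\omega_\e}(\f_t^\e)$ is affine, and then uses Proposition \ref{cor:approx} (the $\f_t^\e$ decrease to $\f_t$ as $\e\downarrow 0$) together with continuity of the energy along decreasing limits to conclude that $t\mapsto E(\f_t)$ is affine as a limit of affine maps; the subgeodesic case is handled by the same approximation. To repair your proof you would either have to reproduce such an approximation argument (also needed because even for $\omega$ K\"ahler the geodesic is only $C^{1,\bar 1}$), or justify the second-derivative computation in a weak/Bedford--Taylor sense, neither of which your proposal does.
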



\begin{proof}
These properties are well-known when $\omega$ is a K\"ahler form.

The monotonicity property follows from the definition since the first derivative of $E$
is $dE=\beta=MA \geq 0$, a probability measure: if $\f_t$ is an arbitrary path, then
$$
\frac{d}{dt}E(\f_t)=\int_X \dot{\f}_t MA(\f_t).
$$

It follows from Stokes theorem that
\begin{eqnarray*}
    \frac{d^{2}}{dt^{2}} E(\f_{t}) &= & \int_{X} \ddot{\f_t}\,MA(\f_t) 
+\frac{n}{V_\a} \int_{X} \dot{\f_t} \, dd^c \dot{\f_t} \wedge \omega_\f^{n-1}  \\
&=&   \int_{X} \left\{ \ddot{\f_t}\,MA(\f_t) -\frac{n}{V_\a} d \dot{\f_t}  \wedge d^c \dot{\f_t} \wedge \omega_{\f_t}^{n-1}  \right\}.
\end{eqnarray*}
Thus $E$ is concave along euclidean segments ($\ddot{\f}_t=0$), affine along Mabuchi geodesics,
and convex along Mabuchi subgeodesics.
The cocycle condition follows by differentiating $E(t\f+(1-t)\p)$.

These computations are mereley heuristic as $t\rightarrow \varphi_t(x)$ is poorly regular when $\f_t$ is a geodesic or a subgeodesic.
We can however approximate $\omega$ by $\omega_\e=\omega+\e \omega_X$, consider $(\f_t^\e)$ the corresponding geodesic
\begin{equation}\label{energy_approx} 
E_{\omega_\varepsilon} (\f_t^\varepsilon)=\frac{1}{(n+1)V_\varepsilon} \sum_{j=0}^n \int_X \f_t^\varepsilon (\omega_\varepsilon+dd^c \f_t^\varepsilon)\wedge \omega_\varepsilon^{n-j}.
\end{equation}
It follows from Proposition \ref{cor:approx}
that $\e \mapsto \f_t^\e$ decreases to $\f_t$, hence
$t \mapsto E(\f_t)$ is affine, being the limit of the affine maps
$t \mapsto E_{\omega_\varepsilon} (\f_t^\varepsilon)$.
 
For subgeodesics we approximate again $\omega$ by $\omega_\varepsilon$ and we proceed as in the K\"ahler case.
\end{proof}

Observe that
$
    E(\f + t) =   E(\f)+t.
$
Given $\f\in \HH_\omega$ there exists a unique  $c\in\Real$ such that $E(\f + c) = 0$. The restriction of the Mabuchi metric to the fiber $E^{-1}(0)$ induces a Riemannian structure on the quotient space 
$\HH_{\a} = \HH_\omega / \Real$ and allows to decompose
$
    \HH_\omega = \HH_{\a} \times \Real
$
as a product of Riemannian manifolds.

\begin{defi}
For $\f \in PSH(X,\omega)$, we set
$$
E(\f):=\inf \{ E(\p) \, ; \, \f \leq \p \text{ and } \p \in PSH(X,\omega) \cap L^{\infty}(X) \} \in [-\infty,+\infty[
$$
and
$
{\mathcal E}^1(X,\omega):=\{ \f \in PSH(X,\omega) \, ; \, E(\f)>-\infty\}.
$
\end{defi}

\subsubsection{Strong topology on  ${\mathcal E}^1(\a)$}

Set
$$
I(\f,\p)=\int_X (\f-\p) \left( MA(\p)-MA(\f) \right).
$$

It has been shown in \cite{BBEGZ} that $I$ defines a complete metrizable uniform structure on
${\mathcal E}^1(\a)$. More precisely we identify ${\mathcal E}^1(\a)$ with the set
$$
\E^1_{norm}(X,\omega)=\{ \f \in \E^1(X,\omega) \, | \, \sup_X \f=0 \}
$$
of normalized potentials. Then
\begin{itemize}
\item $I$ is symmetric and positive on $\E^1_{norm}(X,\omega)^2 \setminus \{ \rm{diagonal} \}$;
\item $I$ satisfies a quasi-triangle inequality \cite[Theorem 1.8]{BBEGZ};
\item $I$ induces a uniform structure which is metrizable \cite{Bourbaki};
\item the metric space $(\E^1(\a),d_I)$ is complete \cite[Proposition 2.4]{BBEGZ}, where $d_I$ denotes one of the distances induced by the uniform structure $I$.
\end{itemize}

\begin{defi}
The {\it strong topology} on $\E^1(\a)$ is the  metrizable topology defined by $I$. 
\end{defi}

The corresponding notion of convergence is
the {\it convergence in energy} previously introduced
in \cite{BBGZ} (see \cite[Proposition 2.3]{BBEGZ}). It is the coarsest refinement of the weak topology
such that $E$ becomes continuous. In particular 
$\text{ if } T_j \longrightarrow T  \text{ in } (\E^1(\a),d_I)$, then
$$
T_j \longrightarrow T  \; \text{ weakly }     \text{ and } \; 
T_j^n \longrightarrow T^n
$$ 
in the weak sense of Radon measures, while the Monge-Amp\`ere operator is usually discontinuous for the weak topology of currents.

%


\subsubsection{Yet another distance}

To fit in with the notations of the next section, we introduce yet another notion of convergence
in ${\mathcal E}^1(X,\omega)$.
We set
$$
I_1(\f,\p):= \int_X |\f-\p| \left[ \frac{MA(\f)+MA(\p)}{2} \right] 
$$

This symmetric quantity is non-negative. It follows from Proposition \ref{pro:dp} that 
it only vanishes on the diagonal of $\E^1(X,\omega)^2$, while Theorem \ref{thm:equiv} will insure that it satisfies a quasi-triangle inequality. Hence, $I_1$ induces a uniform structure which is metrizable \cite{Bourbaki}.\\
For $C>0$, we set
$$
\E_C^1(X,\omega):=\{ \f \in \E^1(X,\omega) \, ; \, E(\f) \geq -C \text{ and } \f \leq C \}.
$$
It follows from Hartogs lemma, the upper-semi continuity and the conca\-vi\-ty of $E$ along euclidean segments (Lemma \ref{lem:AubinMabuchi1}) that this set is a compact and convex subset
of $PSH(X,\omega)$, when endowed with the $L^1$-topology (see \cite[Lemma 2.6]{BBGZ}).

\begin{prop}\label{prop: I and I_1}
For all $\f,\p \in \E^1(X,\omega)$, $I(\f,\p) \leq 2 I_1(\f,\p)$.
Conversely for each $C>0$, there exists $A>0$ such that for all 
$\f,\p \in \E_C^1(X,\omega)$
\begin{equation}\label{$I_1$-convergence}
I_1(\f,\p) \leq \int_X \left[ 2 \max(\f,\p)-(\f+\p) \right] MA(0) +A \, I(\f,\p)^{1/2^n}.
\end{equation}

In particular the topologies induced by $I,I_1$ on $\E_{norm}^1(X,\omega)$ are the same.
\end{prop}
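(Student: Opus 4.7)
\medskip

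The plan is to prove the two inequalities separately and then read off the topological equivalence. For the direct bound, splitting
\[
I(\f,\p)=\int_X(\f-\p)\,MA(\p)+\int_X(\p-\f)\,MA(\f)
\]
and majorizing each integrand pointwise by $|\f-\p|$ immediately yields $I(\f,\p)\le 2\,I_1(\f,\p)$, which is the easy direction.

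\medskip

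For the reverse bound, using the pointwise identity $|\f-\p|=2\max(\f,\p)-(\f+\p)$ shows that the claim is equivalent to
\[
\tfrac12\int_X|\f-\p|\bigl[MA(\f)-MA(0)\bigr]+\tfrac12\int_X|\f-\p|\bigl[MA(\p)-MA(0)\bigr]\le A\,I(\f,\p)^{1/2^n}.
\]
I would expand each bracket via the telescoping identity
\[
\omega_\f^n-\omega^n=\sum_{k=0}^{n-1} dd^c\f\wedge\omega_\f^{k}\wedge\omega^{n-1-k},
\]
truncate $\f_j:=\max(\f,-j)$, $\p_j:=\max(\p,-j)$ to return to the bounded setting, and perform a Stokes integration by parts to transfer the nonsmooth factor $|\f-\p|$ onto $dd^c\f$. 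An iterated Cauchy--Schwarz then takes over: each step converts an integral of the form $\int d(\f-\p)\wedge d^c(\f-\p)\wedge T$, with $T$ a positive closed $(n-1,n-1)$-current, into $I(\f,\p)^{1/2}$ times an integral of the same type but carrying one additional $\omega$-factor. After $n$ iterations the exponent becomes $1/2^n$, and the constants stay uniformly bounded thanks to the sup-norm and energy controls built into $\E^1_C$. The passage to the limit $j\to\infty$ is justified because $E(\f),E(\p)\ge -C$ prevents mass from escaping to $-\infty$. This is the main obstacle; it parallels the iterated Cauchy--Schwarz machinery of \cite{BEGZ10,BBGZ}.

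\medskip

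For the equivalence of topologies on $\E^1_{norm}$, note first that if $\f_j\to\f$ for $I_1$, the direct bound gives $I(\f_j,\f)\to 0$. Conversely, if $I(\f_j,\f)\to 0$, then by \cite{BBEGZ} the sequence converges weakly and $E(\f_j)\to E(\f)$, so $\{\f_j,\f\}\subset \E^1_C$ for some $C$. The reverse bound then yields
\[
I_1(\f_j,\f)\le \int_X|\f_j-\f|\,MA(0)+A\,I(\f_j,\f)^{1/2^n};
\]
the first term tends to $0$ since weak convergence of $\omega$-psh functions is equivalent to $L^1(\omega^n)$-convergence, and the second by hypothesis, hence $I_1(\f_j,\f)\to 0$.
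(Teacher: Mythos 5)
Your proposal is correct and takes essentially the same route as the paper's proof: after the trivial bound $I\leq 2I_1$, you expand $MA(\f)-MA(0)$ by the telescoping identity, integrate by parts, and conclude with the iterated Cauchy--Schwarz estimates of \cite{BBEGZ,BEGZ10}, with all energy factors uniformly bounded on $\E^1_C(X,\omega)$, exactly as in the paper. The only (minor) divergence is that the paper first reduces to the ordered case $\f\leq\p$ via the Pythagorean identity for $I_1$ (Proposition \ref{prop:python}) and Lemma \ref{lem:controleL2}, thereby never differentiating the non-smooth function $|\f-\p|$, whereas your version keeps $|\f-\p|$ and implicitly uses the standard locality fact $d|\f-\p|\wedge d^c|\f-\p|=d(\f-\p)\wedge d^c(\f-\p)$ against positive closed currents; both variants work.
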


Observe that $I_1$ induces a distance on $\E^1(X,\omega)$, but $I$ is merely defined
on $\E_{norm}^1(X,\omega)$, as $I(\f+c,\p+c')=I(\f,\p)$, for any $c,c' \in \R$.

\begin{proof}
The first inequality is obvious, as
$$
I(\f,\p)=\int_X (\f-\p) \left( MA(\p)-MA(\f) \right) \leq \int_X |\f-\p| \left( MA(\p)+MA(\f) \right).
$$

It follows from Proposition \ref{prop:python} below  that
$$
I_1(\f,\p)=I_1(\f,\max(\f,\p))+I_1(\max(\f,\p),\p),
$$
hence it suffices to establish the second inequality when  $\f \leq \p$. In this case
$$
I_1(\f,\p) \leq \int_X (\p-\f) MA(\f),
$$
by Lemma \ref{lem:controleL2},  while Cauchy-Schwarz inequality yields
\begin{eqnarray*}
\lefteqn{
\int_X (\p-\f) MA(\f)  } \\ 
&=& \int_X (\p-\f) MA(0)+\int_X d(\f-\p) \wedge d^c \f \wedge S_\f \\
&\leq & \int_X (\p-\f) MA(0)+I(\f,0)^{1/2} \left( \int_X d(\f-\p) \wedge d^c (\f-\p) \wedge S_\f \right)^{1/2},
\end{eqnarray*}
where we have set $S_\f:=\sum_{j=0}^{n-1} \omega_\f^j \wedge \omega^{n-1-j}$.
Observing that $S_\f \leq 2^{n-1} \omega_{\f/2}^{n-1}$, we can invoke \cite[Lemma 1.9]{BBEGZ} to obtain

$$
\int_X d(\f-\p) \wedge d^c (\f-\p) \wedge S_\f  \leq c_n I(\f,\p)^{1/{2^{n-1}}} 
\left\{ I \left(\f,\frac{\f}{2} \right)^{1-1/{2^{n-1}}}+I\left(\p,\frac{\f}{2} \right)^{1-1/{2^{n-1}}} \right\}.
$$

Now $I(\f,\f/2) \leq a_n I(\f,0) \leq C'$ and \cite[Theorem 1.3]{BBEGZ} yields
$$
I(\p,\f/2) \leq b_n \left\{ I(\p,0)+I(\f/2,0) \right\}
\leq b_n' \left\{ I(\p,0)+I(\f,0) \right\} \leq C''.
$$
We thus get \eqref{$I_1$-convergence}.

In order to prove the last statement we need to show that given a sequence $\f_j \in \E_{norm}^1(X,\omega)$ converging to $\psi$ w.r.t $I$ then it converges to $\psi$ also w.r.t $I_1$, and viceversa. We first note that the $I$-convergence implies the $L^1$-convergence of the potentials \cite[Theorem 10.37]{GZ17}. This insures that
$$\int_X  \left[ 2 \max(\f_j,\p)-(\f_j+\p) \right] MA(0) \rightarrow 0 \qquad  {\rm{as}}\, j\rightarrow +\infty,$$ 
and moreover we have that $\f_j, \psi\in \E_C^1(X,\omega)$ for some $C>0$ (\cite[Lemma 10.33 and Definition 10.34]{GZ17}).
The $I_1$-convergence would then follow from \eqref{$I_1$-convergence}. Moreover, since $I(\f_j, \psi)\leq 2 I_1(\f_j, \psi)$, we conclude that the $I_1$-convergence implies the $I$-convergence.
\end{proof}

\subsection{The complete metric spaces ${\mathcal E}^p(\a)$} \label{subsection: E^p}


Fix $p\geq 1$. Following \cite{GZ07,BEGZ10} we consider the following finite energy classes:

\begin{defi}
We set
$$
{\mathcal E}^p(X,\om):=\left\{ \f \in {\mathcal E}(X,\om) \, / \, 
 |\f|^p \in L^1( MA(\f)) \right\}
$$
and let
$
{\mathcal E}^p(\a)=\{ T=\omega+dd^c \f \, | \, \f \in {\mathcal E}^p(X,\omega) \}
$
denote the corresponding sets of finite energy currents.
\end{defi}




On the class ${\mathcal E}^p(X, \omega)$, $p \geq 1$,
we define
$$
I_p(\f,\p):=\left( \int_X |\f-\p|^p \left[ \frac{MA(\f)+MA(\p)}{2} \right] \right)^{1/p}
$$
This quantity is well-defined   by \cite[Proposition 3.6]{GZ07}. It is obviously non-negative and 
symmetric. It follows from the domination principle (Proposition \ref{pro:dp}) that
$$
I_p(\f,\p)=0 \Longrightarrow \f=\p.
$$  
Moreover, it will follow from Theorem \ref{thm:equiv} (which
shows in particular that $I_p$ satisfies a quasi-triangle inequality) that $I_p$ induces a uniform structure. We can then define the following:

\begin{defi}
The strong topology on $\E^p(\a)$ is the one induced by $I_p$. 
\end{defi}

By \cite[Theorem 2.17]{BEGZ10}, a decreasing sequence converges strongly. We also have good convergence properties
if we approximate by  slightly larger finite energy classes $\E^p(X,\omega_\e)$:

\begin{prop}  \label{prop:approx2}
Fix  $\omega_\e=\omega+\e \omega_X$, $\e>0$. If  $\f,\p  \in \E^p(X,\omega) \cap L^{\infty}(X)$, then
$\f,\p  \in \E^p(X,\omega_\e)\cap L^{\infty}(X)$ and
$I_{p,\omega_\e}(\f,\p) \rightarrow I_{p,\omega}(\f,\p)$ as $\e \rightarrow 0$.

Moreover, if $\f,\p  \in \E^p(X,\omega)$ and $\f_j,\psi_j$ are sequences of smooth $\omega_{\e_j}$-psh functions decreasing to $\f,\psi$ 
with $\e_j \rightarrow 0$, then
$$
I_{p, \omega_{\varepsilon_j} }(\f_j, \psi_j) \rightarrow I_{p, \omega}(\f, \psi)
$$
as $j$ goes to $+\infty$.
\end{prop}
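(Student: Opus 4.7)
The plan is to handle the two assertions in sequence: the first by a direct expansion of $(\omega_\e + dd^c \f)^n$ in powers of $\e$, the second by a truncation reduction to the first.

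For $\f,\psi \in \E^p(X,\omega) \cap L^\infty(X)$, these functions are $\omega_\e$-psh (since $\omega \leq \omega_\e$) and bounded, hence lie in $\E^p(X,\omega_\e) \cap L^\infty(X)$. The binomial expansion
$$(\omega_\e + dd^c \f)^n = (\omega + dd^c \f)^n + \e \sum_{k=1}^n \binom{n}{k}\e^{k-1}(\omega + dd^c \f)^{n-k}\wedge \omega_X^k$$
exhibits the difference as a positive measure, each of whose mixed-product terms is well-defined by Bedford--Taylor (since $\f$ is bounded) with mass uniformly bounded for $\e\in (0,1]$. Testing against the bounded Borel function $|\f-\psi|^p$ gives
$$\int_X |\f-\psi|^p (\omega_\e + dd^c \f)^n = \int_X |\f-\psi|^p(\omega+dd^c\f)^n + O(\e),$$
and analogously with $\f,\psi$ exchanged. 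Combined with $V_\e \to V$, this yields $I_{p,\omega_\e}(\f,\psi)\to I_{p,\omega}(\f,\psi)$.

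For the general case I would introduce truncations $\f^{(k)} := \max(\f,-k)$, $\f_j^{(k)} := \max(\f_j,-k)$ and analogously for $\psi,\psi_j$; each truncated potential is uniformly bounded in $k$ and $\omega_{\e_j}$-psh. Using the quasi-triangle inequality for $I_p$ supplied by Theorem \ref{thm:equiv}, I would set up a three-piece decomposition
$$|I_{p,\omega_{\e_j}}(\f_j,\psi_j) - I_{p,\omega}(\f,\psi)| \leq C(A_{k,j} + B_{k,j} + C_k),$$
where $A_{k,j}$ measures the truncation defect on the $j$-side, $B_{k,j}$ is the convergence defect for the truncated potentials as $j\to\infty$, and $C_k$ is the truncation defect on the limit side. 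The term $C_k\to 0$ as $k\to\infty$ by strong $\E^p(X,\omega)$-continuity of truncation (\cite{BEGZ10}). For each fixed $k$, $B_{k,j} \to 0$ as $j\to\infty$ by a slight extension of the bounded case: the same $\e_j$-expansion applies, and Bedford--Taylor weak continuity along the monotone sequences $\f_j^{(k)} \searrow \f^{(k)}$, $\psi_j^{(k)} \searrow \psi^{(k)}$ transfers each mixed product to its limit.

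The main obstacle is the uniform control of $A_{k,j}$ in $j$. The key observation is that $\f_j \searrow \f \in \E^p(X,\omega)$, and for any fixed $\e_0 > 0$ one has $\f_j \in \E^p(X,\omega_{\e_0})$ for $j$ large (since $\omega_{\e_j} \leq \omega_{\e_0}$ gives $PSH(X,\omega_{\e_j}) \subset PSH(X,\omega_{\e_0})$), with $E_{\omega_{\e_0}}(\f_j)$ uniformly bounded in $j$ by monotonicity of $E$ along the decreasing sequence. The standard $\E^p$ tail estimates from \cite{GZ07,BEGZ10} then yield
$$\sup_{j \text{ large}} \int_{\{\f_j \leq -k\}} |\f_j|^p (\omega_{\e_j}+dd^c\f_j)^n \xrightarrow[k\to\infty]{} 0,$$
and similarly for $\psi_j$, which bounds $A_{k,j}$ uniformly in $j$. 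Sending $j\to\infty$ first and then $k\to\infty$ in the three-piece estimate completes the argument.
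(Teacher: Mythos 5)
Your first part is fine and is essentially the paper's argument made slightly more precise: the binomial expansion of $(\omega_\e+dd^c\f)^n$ shows the difference with $(\omega+dd^c\f)^n$ is a positive measure of mass $O(\e)$, so testing against the bounded function $|\f-\p|^p$ is legitimate. The second part, however, contains a genuine gap: you invoke ``the quasi-triangle inequality for $I_p$ supplied by Theorem \ref{thm:equiv}'', but at this point of the paper that inequality is not available for the degenerate form $\omega$, and inside this paper it cannot be: Theorem \ref{thm:equiv} is proved \emph{after} Proposition \ref{prop:approx2}, its proof goes through Proposition \ref{prop:pyth}, which explicitly uses ``the same arguments as in the proof of Proposition \ref{prop:approx2}'', and the extension of the $d_p$/$I_p$ comparison from $\H_{bd}$ to $\E^p(X,\omega)$ (Proposition \ref{properties $D_p$}) again rests on Proposition \ref{prop:approx2}. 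For the K\"ahler forms $\omega_{\e_j}$ you could quote Darvas, but your three-piece decomposition also requires comparing $I_{p,\omega}(\f^{(k)},\p^{(k)})$ with $I_{p,\omega}(\f,\p)$, i.e. precisely the quasi-metric property of $I_{p,\omega}$ that is not yet established; as written the argument is circular relative to the paper's logical structure.

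Two further steps are stated more optimistically than they deserve, though they are repairable. For $B_{k,j}$, ``Bedford--Taylor weak continuity'' is not enough: the test function $|\f_j^{(k)}-\p_j^{(k)}|^p$ varies with $j$ and is only bounded and quasi-continuous, so one needs a monotone-convergence statement such as \cite[Theorem 4.26]{GZ17}, or a splitting of the integrand. For $A_{k,j}$, the uniform-in-$j$ tail bound needs the domination $(\omega_{\e_j}+dd^c\f_j)^n\leq(\omega+\omega_X+dd^c\f_j)^n$ and a fundamental-inequality argument (\cite[Lemma 3.5]{GZ07}); it does not follow from boundedness of an energy alone. The paper avoids both the truncation and any quasi-triangle inequality by an elementary route: by symmetry it suffices to show $\int_X|\f_j-\p_j|^p(\omega_{\e_j}+dd^c\f_j)^n\to\int_X|\f-\p|^p(\omega+dd^c\f)^n$; one applies Minkowski's inequality in $L^p$ of the measure $(\omega_{\e_j}+dd^c\f_j)^n$ to split $|\f_j-\p_j|_p$ into $|\f-\p|_p$ plus the errors $|\f_j-\f|_p$ and $|\p_j-\p|_p$, treats the main term by \cite[Theorem 4.26]{GZ17}, and kills the errors via Lemma \ref{lem:controleL2}, the bound $\omega_{\e_j}\leq\omega+\omega_X$, and dominated convergence. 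If you replace your appeal to Theorem \ref{thm:equiv} by this kind of genuine $L^p$ triangle inequality (the quasi-triangle inequality for $I_p$ is not needed anywhere), your scheme can be salvaged; as it stands the key comparison step is unjustified.
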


\begin{proof}
Note that $\f, \p$ belong to any energy class w.r.t any K\"ahler form since they are bounded. In particular $\f, \p\in \E^p(X, \omega_\e)$. The first assertion follows from the fact that $(\omega_\varepsilon+dd^c \varphi)^n$ and $(\omega_\varepsilon+dd^c \p)^n$ 
converges weakly to $(\omega+dd^c \varphi)^n$ and $(\omega+dd^c \p)^n$ as $\e \rightarrow 0$, respectively. For the second statement, we observe that by symmetry it suffices to prove that
$$\int_X |\f_j-\p_j|^p (\omega_{\varepsilon_j}+dd^c \f_j)^n \rightarrow \int_X |\f-\p|^p (\omega+dd^c \f)^n, \quad {\rm as}\; j\rightarrow +\infty.$$
Given a bounded function $f$ on $X$, we set
$$|f|_p:=\left(\int_X |f|^p (\omega_{\varepsilon_j}+dd^c \f_j)^n\right)^{1/p}.$$
The triangle inequality yields
$$|\f_j-\p_j|_p\leq |\f-\p|_p+|(\f_j-\f)|+ |(\p-\psi_j)|_p$$
and similarly
$$  |\f_j-\p_j|_p\geq |\f-\p|_p-|(\f_j-\f)|-| (\p-\psi_j)|_p .
$$
Since $\f-\psi$ is a positive quasi-continuous uniformly bounded function on $X$, it follows from \cite[Theorem 4.26]{GZ17} that


 $$ |\f-\p|_p^p=\int_{X} |\f-\psi|^p (\omega_{\varepsilon_j}+dd^c \f_j)^n \rightarrow  \int_{X} |\f-\psi|^p (\omega+dd^c \f)^n$$
 as $j\rightarrow +\infty$. Moreover, we claim that the terms 
$|(\f_j-\f)|_p$ and $ |(\p-\psi_j)|_p$ goes to  $ 0$  as $j\rightarrow +\infty.$
 Lemma \ref{lem:controleL2} together with the fact that $\omega_{\varepsilon_j}\leq \omega+\omega_X$ yields
$$\int_X (\f_j-\f)^p (\omega_{\varepsilon_j}+dd^c \f_j)^n  \leq \int_X (\f_j-\f)^p (\omega+\omega_X+dd^c \f)^n.  $$
Note that $\f_j, \f\in \E^p(X, \omega+\omega_X)$ (since they are bounded). Hence \cite[Theorem 3.8]{GZ07} insures that the integral at the RHS of the above inequality is finite.\\
Since $\f_j$ is decreasing to $\f$, it then follows from the dominated convergence theorem that $|(\f_j-\f)|_p^p\rightarrow 0$ as $j\rightarrow +\infty$. Fix $j_0<j$. Then
$$\int_X  (\p_j-\p)^p (\omega_{\varepsilon_j}+dd^c \f_j)^n \leq \int_X  (\p_{j_0}-\p)^p (\omega+\omega_X+dd^c \f_j)^n.$$
It follows again from the continuity of the Monge-Amp\`ere operator along decrea\-sing sequence, \cite[Corollary 1.14]{Kol05} and the dominated convergence theorem that letting $j\rightarrow +\infty$ and then $j_0\rightarrow +\infty$ we get
$$\int_X  (\p_{j_0}-\p)^p (\omega+\omega_X+dd^c \f_j)^n\rightarrow 0 .$$
Thus $|(\p_j-\p)|_p^p\rightarrow 0$ as $j\rightarrow +\infty$. Hence the conclusion.
\end{proof}

It follows from H\"older inequality
that the strong topology on ${\mathcal E}^p(\a)$
 is stronger than the one on $\E^1(\a)$: if a sequence $(\f_j) \in \E^p(X,\omega)$ is a Cauchy sequence for $I_p$, then it is
a Cauchy sequence in $(\E^1(X,\omega),d_I)$ since
$$
0 \leq I(\f,\p)=\int_X (\f-\p) \left[ MA(\p)-MA(\f) \right]\leq {2}^{1/p} I_p(\f,\p).
$$
Since $(\E^1(X,\omega),d_I)$ is complete, there is $\f \in \E^1(X,\omega)$
s.t. $d_I(\f_j,\f) \rightarrow 0$.
Now $I_p(\f_j,0)$ is bounded and $MA(\f_j)$ converges to $MA(\f)$
(by \cite[Proposition 5.6]{BBGZ}). Thus  $\f \in \E^p(X,\omega)$ by Fatou's and Hartogs' lemma.

One would now like to prove that $I_p(\f_j,\f) \rightarrow 0$ and conclude that the space 
$(\E^p(X,\omega),I_p)$ is complete,
arguing as in \cite[Proposition 2.4]{BBEGZ}.
We refer the reader to Theorem \ref{thm_Completeness} for a neat treatment.

\begin{lem} \label{lem:controleL2}
Let $\f,\p$ be bounded $\omega$-psh functions and $S$ be a positive closed current of bidimension $(1,1)$
on $X$. If $\f \leq \p$, then
$$
\int_X (\p-\f)^p \omega_\p \wedge S \leq \int_X (\p-\f)^p \omega_\f \wedge S.
$$
In particular 
$
V_\a^{-1} \int_X (\p-\f)^p \omega_\p^j \wedge \omega_\f^{n-j}
\leq \int_X (\p-\f)^p MA(\f).
$
\end{lem}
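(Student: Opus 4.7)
Set $u := \p - \f \geq 0$, so that $\omega_\p - \omega_\f = dd^c u$ and the desired inequality
\[
\int_X u^p\, \omega_\p \wedge S \leq \int_X u^p\, \omega_\f \wedge S
\]
is equivalent to $\int_X u^p\, dd^c u \wedge S \leq 0$. I will prove this latter form, which is a standard energy-type monotonicity.

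\textbf{Key computation.} The identity
\[
dd^c(u^{p+1}) = (p+1)\, u^p\, dd^c u + p(p+1)\, u^{p-1}\, du \wedge d^c u,
\]
valid in the sense of currents, comes from applying $d$ and then $d^c$ to the composition of the $C^1$ convex function $x \mapsto x^{p+1}$ (on $[0,\infty)$) with $u$. Wedging with the positive closed current $S$ and integrating over the compact manifold $X$, Stokes's theorem gives $\int_X dd^c(u^{p+1}) \wedge S = 0$, hence
\[
\int_X u^p\, dd^c u \wedge S = -p \int_X u^{p-1}\, du \wedge d^c u \wedge S \leq 0,
\]
since $u^{p-1} \geq 0$ and $du \wedge d^c u \wedge S$ is a positive Radon measure: $du \wedge d^c u$ is a positive $(1,1)$-current, and $S$ is a positive closed current of bidimension $(1,1)$.

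\textbf{The iterated statement.} For the ``In particular'' part, apply the inequality just proved successively with the positive closed $(n-1,n-1)$-currents $S = \omega_\p^{k-1} \wedge \omega_\f^{n-k}$, for $k$ decreasing from $j$ down to $1$. This chains into
\[
\int_X (\p-\f)^p\, \omega_\p^j \wedge \omega_\f^{n-j} \leq \int_X (\p-\f)^p\, \omega_\p^{j-1} \wedge \omega_\f^{n-j+1} \leq \cdots \leq \int_X (\p-\f)^p\, \omega_\f^n,
\]
and dividing by $V_\a$ yields the displayed inequality of the lemma.

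\textbf{Main obstacle.} The only point requiring some care is the integration by parts when $\f, \p$ are merely bounded (so that $u$ is a difference of bounded quasi-psh functions rather than a smooth function): all the products of currents appearing above are well defined by Bedford--Taylor theory for bounded quasi-psh functions, and the Stokes argument then applies verbatim. Alternatively, one can approximate $\f,\p$ from above by decreasing sequences of smooth $\omega_\e$-psh potentials---as done systematically elsewhere in this paper---prove the inequality for the smooth approximants, and pass to the limit using weak continuity of the relevant mixed Monge--Amp\`ere type products along decreasing sequences.
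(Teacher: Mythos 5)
Your proposal is correct and follows essentially the same route as the paper: a Stokes/integration-by-parts argument reducing the difference of the two integrals to $p\int_X (\p-\f)^{p-1}\,d(\p-\f)\wedge d^c(\p-\f)\wedge S \geq 0$ (your $dd^c(u^{p+1})$ identity is just this integration by parts written out), followed by the same inductive chaining with $S=\omega_\p^{k-1}\wedge\omega_\f^{n-k}$ for the second assertion. The Bedford--Taylor justification you flag is the standard point the paper leaves implicit, so there is nothing to add.
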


\begin{proof}
By Stokes' theorem,
$$
\int_X (\p-\f)^p \omega_\f \wedge S-\int_X (\p-\f)^p \omega_\p \wedge S
=p\int_X (\p-\f)^{p-1} d(\f-\p) \wedge d^c (\f-\p) \wedge S
$$
is non-negative if $(\p-\f) \geq 0$. 

The second assertion follows by applying the first one  inductively.
\end{proof}

 We now establish a few useful properties of $I_p$ that will notably allow to compare
 $I_p$ to $d_p$ in the next section.

\begin{prop} \label{prop:python}
For $\f,\p \in \E^p(X,\omega)$,
$$
I_p(\f,\p)^p=I_p(\f,\max(\f,\p))^p+I_p(\max(\f,\p),\p)^p.
$$
\end{prop}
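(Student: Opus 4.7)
The plan is to reduce this to a partition identity using the locality of the non-pluripolar Monge--Amp\`ere operator. Set $u := \max(\f,\p)$. First I would verify that $u \in \E^p(X,\omega)$, which follows from standard stability properties of finite energy classes: $u \geq \f$ and $u \geq \p$ give $E(u) > -\infty$, and $|u|^p \leq |\f|^p + |\p|^p$ on $\{u = \f\} \cup \{u = \p\} = X$, so $u$ has finite $p$-energy. Thus all three quantities $I_p(\f,\p), I_p(\f,u), I_p(u,\p)$ are well defined.

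The next step is to split $X$ into the non-pluripolar pieces $A := \{\f \geq \p\}$ and $B := \{\f < \p\}$, and to invoke the locality of the Monge--Amp\`ere operator for potentials in $\E(X,\omega)$ (see \cite{BEGZ10}): on $A$ one has $\mathbf{1}_A \, MA(u) = \mathbf{1}_A \, MA(\f)$, while on $B$ one has $\mathbf{1}_B \, MA(u) = \mathbf{1}_B \, MA(\p)$ (the boundary $\{\f=\p\}$ can be attributed to either side, since non-pluripolar measures do not see where a difference of potentials vanishes with conflicting Monge--Amp\`ere densities in a way that affects these identities).

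Using this, the right-hand side unwinds cleanly. Because $|\f - u| = 0$ on $A$ and $|u - \p| = 0$ on $B$, we get
\begin{align*}
I_p(\f,u)^p &= \int_B (\p-\f)^p \, \tfrac{MA(\f)+MA(u)}{2} = \int_B |\f-\p|^p \, \tfrac{MA(\f)+MA(\p)}{2}, \\
I_p(u,\p)^p &= \int_A (\f-\p)^p \, \tfrac{MA(u)+MA(\p)}{2} = \int_A |\f-\p|^p \, \tfrac{MA(\f)+MA(\p)}{2},
\end{align*}
where in each case the second equality uses the locality identity above. Summing the two and recognising the result as the integral of $|\f-\p|^p \, \tfrac{MA(\f)+MA(\p)}{2}$ over $A \cup B = X$ gives exactly $I_p(\f,\p)^p$.

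The main obstacle is the rigorous justification of the locality identities $\mathbf{1}_{\{\f>\p\}} MA(\max(\f,\p)) = \mathbf{1}_{\{\f>\p\}} MA(\f)$ for potentials that are only in $\E^p(X,\omega)$ rather than bounded. For bounded $\omega$-psh functions this is the classical Bedford--Taylor locality theorem; for finite energy potentials it is extended via the canonical truncations $\f_j = \max(\f,-j)$, using that $MA(\f_j) \to MA(\f)$ in the non-pluripolar sense and that $\max(\f_j,\p_j) \to \max(\f,\p)$ with the corresponding Monge--Amp\`ere measures also converging. Once this locality is in hand, the Pythagorean identity is just a bookkeeping splitting of the integral over $A$ and $B$.
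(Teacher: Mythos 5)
Your proof is correct and follows essentially the same route as the paper: it uses the locality (maximum principle) of the Monge--Amp\`ere operator on the plurifinely open sets $\{\f<\p\}$ and $\{\f>\p\}$, together with the vanishing of $|\f-\max(\f,\p)|$ on $\{\f\geq\p\}$ (and symmetrically), and then sums the two contributions. The only imprecision is your locality claim on the closed set $\{\f\geq\p\}$, which in general is only valid on $\{\f>\p\}$; this is harmless here, exactly as in the paper's argument, because the integrand vanishes on the contact set $\{\f=\p\}$.
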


\begin{proof}
Recall that the maximum principle insures that
$$
\1_{\{ \f<\p\}} MA(\max(\f,\p))= \1_{\{ \f<\p\}}MA(\p),
$$
while $(\f-\max(\f,\p))^p=0$ on $(\f \geq \p)$, thus
$$
2 I_p(\f,\max(\f,\p))^p=\int_{\{\f<\p \}} |\f-\p|^p \left[ MA(\f)+MA(\p)\right].
$$
Similarly
$
2 I_p(\p,\max(\f,\p))^p=\int_{\{\f>\p \}} |\f-\p|^p \left[ MA(\f)+MA(\p)\right]
$
and the result follows since 
$$
I_p(\f,\p)^p=\frac{1}{2}\int_{\{\f \neq \p \}} |\f-\p|^p \left[ MA(\f)+MA(\p)\right].
$$
\end{proof}

\begin{cor}
For all $\f,\p \in \E^p(X,\omega)$,
$$
I_p\left( \frac{\f+\p}{2},\p \right) \leq I_p(\f,\p).
$$
\end{cor}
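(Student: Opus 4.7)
The plan is to apply the Pythagorean formula of Proposition~\ref{prop:python} to both pairs $(\f,\p)$ and $(u,\p)$, where $u := (\f+\p)/2$, and reduce the claim to two comparisons between globally ordered potentials that can be handled by Lemma~\ref{lem:controleL2}.

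Set $M := \max(\f,\p)$ and $\tilde u := \max(u,\p)$. A direct pointwise check shows $\tilde u = (M+\p)/2$. Pythagoras thus gives
$$I_p(\f,\p)^p = I_p(\f,M)^p + I_p(M,\p)^p \quad\text{and}\quad I_p(u,\p)^p = I_p(u,\tilde u)^p + I_p(\tilde u,\p)^p,$$
and it suffices to establish the two termwise comparisons (a) $I_p(\tilde u,\p) \leq I_p(M,\p)$ and (b) $I_p(u,\tilde u) \leq I_p(\f,M)$, since summing the $p$-th powers then yields $I_p(u,\p)^p \leq I_p(\f,\p)^p$.

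For (a) the potentials are ordered ($\p \leq \tilde u$), so iterating Lemma~\ref{lem:controleL2} gives $\int(\tilde u-\p)^p MA(\tilde u) \leq \int(\tilde u-\p)^p MA(\p)$. Combining this with $\tilde u-\p = (M-\p)/2$ and the trivial lower bound $2I_p(M,\p)^p \geq \int(M-\p)^p MA(\p)$, a short computation exploiting the constant $2/2^p \leq 1$ (for $p \geq 1$) concludes (a).

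The main obstacle is (b). The difference $|\tilde u-u| = (M-\f)/2 = (\p-\f)_+/2$ is supported on $\{\f<\p\}$, and on this set the max principle for non-pluripolar Monge--Amp\`ere measures provides the two identities $\mathbf{1}_{\{\f<\p\}} MA(\tilde u) = \mathbf{1}_{\{\f<\p\}} MA(\p)$ and $\mathbf{1}_{\{\f<\p\}} MA(u) = \mathbf{1}_{\{\f<\p\}} MA(\max(\f,u))$. I would then apply Lemma~\ref{lem:controleL2} iteratively to the globally ordered pair $\f \leq \max(\f,u)$---whose difference $(\max(\f,u)-\f)^p$ is itself supported on $\{\f<\p\}$---and rescale by $2^p$ to obtain $\int_{\{\f<\p\}}(\p-\f)^p MA(u) \leq \int_{\{\f<\p\}}(\p-\f)^p MA(\f)$. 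Substituting this and the max-principle identity for $MA(\tilde u)$ into
$$2 I_p(u,\tilde u)^p = \frac{1}{2^p}\int_{\{\f<\p\}}(\p-\f)^p [MA(u)+MA(\tilde u)]$$
and recognizing the right-hand side (via $\mathbf{1}_{\{\f<\p\}} MA(M) = \mathbf{1}_{\{\f<\p\}} MA(\p)$) as $2^{-p} \cdot 2 I_p(\f,M)^p$ would yield the stronger bound $I_p(u,\tilde u) \leq \tfrac{1}{2} I_p(\f,M)$, whence (b). The delicate step is sequencing the max-principle identities with the iterated Lemma~\ref{lem:controleL2} correctly while tracking the scaling factor $2^p$ coming from $u-\tilde u = (\f-M)/2$.
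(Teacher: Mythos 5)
Your argument is correct, and it takes a genuinely different route from the paper. The paper first reduces to $\f,\p\in\H_\omega$ by decreasing approximation, then normalizes $\p=0$ by replacing $\omega$ with $\omega_\p$, applies the Pythagorean formula to the pair $(0,\f/2)$, and handles the harder term via the expansion $MA(\f/2)=2^{-n}V_\a^{-1}\sum_j\binom{n}{j}\omega_\f^j\wedge\omega^{n-j}$ together with the claim $\int_X(\max(0,\f)-\f)^p\,\omega_\f^j\wedge\omega^{n-j}\leq\int_X(\max(0,\f)-\f)^p\,\omega_\f^n$, which is proved by an explicit integration by parts over $\{\f<0\}$. You avoid both the normalization and this mixed-term comparison: by applying Pythagoras to $(u,\p)$ with $u=(\f+\p)/2$, $\tilde u=\max(u,\p)=(M+\p)/2$, and by feeding the ordered pair $\f\leq\max(\f,u)$ (whose difference is $2^{-1}(\p-\f)_+$, supported on $\{\f<\p\}$) into the iterated Lemma \ref{lem:controleL2}, the only extra input you need is the plurifine locality identities $\1_{\{\f<\p\}}MA(\max(u,\p))=\1_{\{\f<\p\}}MA(\p)$ and $\1_{\{\f<\p\}}MA(\max(\f,u))=\1_{\{\f<\p\}}MA(u)$ — exactly the maximum principle already used in Proposition \ref{prop:python}. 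Your route even gives the sharper estimate $I_p(u,\tilde u)\leq\tfrac12 I_p(\f,M)$ and is symmetric in the roles of $\f,\p$ without changing the background form. One small point to add for completeness: Lemma \ref{lem:controleL2} is stated for bounded potentials, so as in the paper you should either insert the one-line reduction to bounded (or smooth) potentials by decreasing approximation, using continuity of $I_p$ along decreasing sequences, or remark that the Stokes argument in that lemma extends to $\E^p(X,\omega)$; with that said, all the identities you invoke are valid in the finite energy setting and the bookkeeping of the factor $2^p$ is exactly as you describe.
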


 \begin{proof}
 By approximating $\f,\p$ from above by a decreasing sequences, it suffices to treat the case when $\f,\p \in \H_{\omega}$.
 Changing $\omega$ in $\omega_\p$, we can further assume that $\p=0$. It follows from Proposition \ref{prop:python}
 that
 $$
 I_p\left( 0,\f/2 \right)^p = I_p(0,\max(0,\f/2))^p+I_p(\max(0,\f/2), \f/2)^p.
 $$
 It follows from Lemma \ref{lem:controleL2} that 
 \begin{eqnarray*}
  I_p(0,\max(0,\f/2))^p &\leq& \int_X \max(0,\f/2)^p MA(0)  \\
  &=&2^{-p} \int_X \max(0,\f)^p MA(0) 
  \leq I_p(0,\max(0,\f))^p.
 \end{eqnarray*}
 
 We claim that for all $0 \leq j \leq n$,
 $$
 \int_X (\max(0,\f)-\f)^p \omega_\f^j \wedge \omega^{n-j} \leq \int_X (\max(0,\f)-\f)^p \omega_\f^n.
 $$
 Assuming this for the moment, it follows again from Lemma \ref{lem:controleL2} that 
 {\small
\begin{eqnarray*}
I_p(\max(0,\f/2), \f/2)^p & \leq & \int_X (\max(0,\f/2)-\f/2)^p MA(\f/2) \\
&=& \frac{1}{2^{n+p} V_\a} \sum_{j=0}^n C_n^j \int_X (\max(0,\f)-\f)^p \omega_\f^j \wedge \omega^{n-j} \\
 &  \leq & \frac{1}{2} \int_X (\max(0,\f)-\f)^p MA(\f) 
\leq   I_p(\f,\max(0,\f))^p.
\end{eqnarray*}
}
We infer
 $$
 I_p\left( 0,\f/2 \right)^p  \leq  I_p(0,\max(0,\f))^p+I_p(\max(0,\f), \f)^p=I_p(0,\f)^p,
 $$
 by using Proposition \ref{prop:python} again. 

\smallskip

It remains to justify our claim. Set $S=\omega^{j-1} \wedge \omega_\f^{n-j}$.
 It suffices, by induction, to establish the following inequality:
 \begin{eqnarray*}
 \lefteqn{ \int_X (\max(0,\f)-\f)^p \omega \wedge S } \\
 &=&  \int_X (\max(0,\f)-\f)^p \omega_\f \wedge S - \int_X (\max(0,\f)-\f)^p dd^c \f \wedge S \\
 & \leq  & \int_X (\max(0,\f)-\f)^p \omega_\f \wedge S.
 \end{eqnarray*}
This follows by observing that

 \begin{eqnarray*}
- \int_X (\max(0,\f)-\f)^p dd^c \f \wedge S
 &=& p \int_X (\max(0,\f)-\f)^{p-1} d (\max(0,\f)-\f) \wedge d^c \f \wedge S \\
 &=& -p \int_{\{\f<0\}}  (-\f)^{p-1} d \f \wedge d^c \f \wedge S \leq 0.
 \end{eqnarray*}
 
 \end{proof}

\section{Comparing distances} \label{sec:comparison}
 
 In this section we show that $I_p$ is equivalent to $d_p$
 (Theorem \ref{thm:equiv}). Recall that:
    $$\mathcal{H}_{bd}:=\{\f\in \psh(X, \omega) \cap L^\infty(X),\; \f=P_{\omega}(f)\; {\rm for\, some}\; f\in C^0(X)\, {\rm with}\, dd^c f\leq C\omega_X,\; C>0\}.$$
 In the following we are going to use several times and in a crucial way that Theorem \ref{thm:magicFormulaBdd} insures
 $${d}_p^p(\f_0, \f_1)= \int_X |\dot{\f}_0|^p \frac{(\omega+dd^c \f_0)^n}{V}= \int_X |\dot{\f}_1|^p \frac{(\omega+dd^c \f_1)^n}{V},\quad \forall \f_0, \f_1\in \H_{bd}.$$

 
\subsection{Kiselman transform and geodesics}

Let $(\f_t)_{0 \leq t \leq 1}$ be the Mabuchi geodesic.
For all $x \in X$, $t \in [0,1] \mapsto \f_t(x) \in \R$ is convex. It is natural to consider
its Legendre transform $u_s(x): s \mapsto \sup_{t \in [0,1]} \{ st-\f_t(x)\}$.
This function is convex in $s$, but the dependence in $x$ is $-\omega$-psh, so 
we rather consider $-u_s$. We finally change $s$ in $-s$ to obtain a more elegant formula,
$$
\p_s(x):=\inf_{0 \leq t \leq 1} \{ st+\f_t(x)\}.
$$

\begin{prop}  \label{pro:kiselman}
The functions $x \mapsto \p_s(x)$ are $\omega$-plurisubharmonic.
In particular
$
x \mapsto \p_0(x)=\inf_{0 \leq t \leq 1} \f_t(x)
\text{ is } \omega$-psh.
\end{prop}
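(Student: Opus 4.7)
The plan is to invoke Kiselman's minimum principle for plurisubharmonic functions that are invariant under a one-parameter subgroup of translations, suitably adapted to the $\omega$-psh setting.

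First I would note that the function
$$\Phi(x,z):= \varphi(x,z) + s \Re(z)$$
is $\omega$-psh on $M = X\times S$, since adding the pluriharmonic function $s \Re(z)$ preserves $\omega$-plurisubharmonicity. Moreover, because $\varphi(x,z) = \varphi_{\Re(z)}(x)$ only depends on $\Re(z)$, the same holds for $\Phi$, so $\Phi$ is invariant under the vertical translations $z \mapsto z + i\tau$, $\tau \in \mathbb{R}$. Taking the infimum over $z \in \bar S$ gives precisely
$$\inf_{z \in \bar S} \Phi(x,z) = \inf_{t \in [0,1]}\{st + \varphi_t(x)\} = \psi_s(x).$$

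Next I would transport this to a setting where Kiselman's principle applies in its standard form. The holomorphic map $z \mapsto \zeta = e^{-z}$ sends the strip $S$ onto the annulus $A = \{e^{-1} < |\zeta| < 1\}$; pulling $\Phi$ back through this covering one obtains an $\omega$-psh function $\tilde\Phi(x,\zeta)$ on $X \times A$ that is $S^1$-invariant in $\zeta$ (invariance under imaginary translations in $z$ becomes rotation invariance in $\zeta$). Kiselman's classical minimum principle, applied to $\tilde\Phi$, then asserts that
$$\psi_s(x) = \inf_{\zeta \in \bar A}\tilde\Phi(x,\zeta)$$
is $\omega$-plurisubharmonic on $X$. (Upper semi-continuity of $\psi_s$ follows from the continuity of $t \mapsto \varphi_t(x)$, which is automatic since this function is convex and finite on $[0,1]$; boundedness follows from Proposition \ref{prop:Semmes}(i).)

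The main technical point will be to justify Kiselman's principle in the \emph{quasi}-psh setting. This is however standard: writing $\omega = dd^c \rho$ locally, $\tilde\Phi + \rho$ becomes genuinely psh on a product of small open sets in $X$ with $A$, still $S^1$-invariant in $\zeta$, so the classical statement applies and yields a psh infimum, whose difference with $\rho$ is $\omega$-psh. Taking $s=0$ gives the final assertion that $\psi_0 = \inf_{t \in [0,1]} \varphi_t$ is $\omega$-psh.
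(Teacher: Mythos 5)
Your argument is correct and is essentially what the paper does: the paper's proof consists of citing Kiselman's minimum principle, and your write-up just supplies the standard details (adding the pluriharmonic term $s\Re(z)$, using invariance in $\Im z$, and localizing a potential of $\omega$ to reduce to the genuinely psh case). The only remark is that the strip is already a tube domain, so Kiselman's principle applies to it directly and the passage to the $S^1$-invariant picture on the annulus is a harmless but unnecessary detour.
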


This is the minimum principle of Kiselman \cite{Kis78}.
For $\f_0,\f_1 \in \H_{bd}$ we let $\f_0 \vee \f_1$ denote the greatest $\omega$-psh function that lies below
$\f_0$ and $\f_1$. In the notations of Berman-Demailly \cite{BD12}
$$
\f_0 \vee \f_1=P(\min(\f_0,\f_1)),
$$
while $\f_0 \vee \f_1$ is denoted by $P(\f_0,\f_1)$ in \cite{Dar14}.

An important consequence of Kiselman minimum principle \cite{Kis78}
is the following observation due to  Darvas and Rubinstein \cite{DR14}:

\begin{prop} \label{pro:kis}
The function $\f_0 \vee \f_1$ is a bounded $\omega$-psh which has locally bounded Laplacian on the ample locus of $\alpha=\{\omega\}$  and 
 its Monge-Amp\`ere measure $MA(\f_0 \vee \f_1)$ is supported on the coincidence set
$$
\{ x \in X \, | \, \f_0 \vee \f_1(x)=\min(\f_0,\f_1)(x) \}.
$$
Moreover 
$
MA(\f_0 \vee \f_1)=\1_{\{\f_0 \vee \f_1=\f_0\}} MA(\f_0)+\1_{\{\f_0 \vee \f_1=\f_1<\f_0\}} MA(\f_1).
$

\smallskip

Let $(\f_t)$ be the Mabuchi geodesic joining $\f_0$ and $\f_1$.
Then for all $x \in X$,
$$
\f_0 \vee \f_1(x)=\inf_{t \in [0,1]} \f_t(x).
$$
\end{prop}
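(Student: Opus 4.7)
The plan is to split the statement into two essentially independent parts: (a) the structural properties of $\f_0 \vee \f_1$ as a projection, which follow from Theorem \ref{BD}, and (b) the identification of $\f_0 \vee \f_1$ with $\inf_t \f_t$, which is an upper-envelope argument using Kiselman's minimum principle (Proposition \ref{pro:kiselman}).

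For (a), the first step is to realize $\f_0 \vee \f_1 = P_\omega(\min(\f_0, \f_1))$ and to check that $f := \min(\f_0,\f_1)$ satisfies the hypothesis of Theorem \ref{BD}, i.e.\ $dd^c f \leq C \omega_X$ for some $C>0$. Writing $f = -\max(-\f_0,-\f_1)$ and using that $\f_0,\f_1 \in \H_{bd}$ have $dd^c \f_i \geq -\omega \geq -C\omega_X$, the max of the two quasi-psh functions $-\f_0,-\f_1$ is again quasi-psh with $dd^c(\max) \geq -C\omega_X$, exactly as in the computation preceding Section \ref{sec:energy}. Theorem \ref{BD} then yields at once that $\f_0 \vee \f_1$ has bounded Laplacian on $\Amp(\{\omega\})$ and that
\[
MA(\f_0 \vee \f_1) = \mathbbm{1}_{\{\f_0 \vee \f_1 = \min(\f_0,\f_1)\}}\,\frac{(\omega+dd^c \min(\f_0,\f_1))^n}{V_\a},
\]
which is precisely the support statement. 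To obtain the finer decomposition, I would split the contact set into $A := \{\f_0 \vee \f_1 = \f_0\}$ and $B := \{\f_0 \vee \f_1 = \f_1 < \f_0\}$, which are disjoint and cover the contact locus. On $A$ the identity $\f_0 \vee \f_1 = \f_0$ together with the locality of the Monge-Amp\`ere operator on the set where two $\omega$-psh functions coincide gives $\mathbbm{1}_A\,MA(\f_0 \vee \f_1) = \mathbbm{1}_A\, MA(\f_0)$, and similarly on $B$ one gets $\mathbbm{1}_B\,MA(\f_0 \vee \f_1) = \mathbbm{1}_B\, MA(\f_1)$, which is the claimed formula.

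For (b), define $\psi(x) := \inf_{t \in [0,1]} \f_t(x)$. Kiselman's minimum principle (Proposition \ref{pro:kiselman}) ensures $\psi \in PSH(X,\omega)$, and since $\psi \leq \f_0$ and $\psi \leq \f_1$ we get $\psi \leq \f_0 \vee \f_1$ by definition of the latter as the largest $\omega$-psh minorant of $\min(\f_0,\f_1)$. For the reverse inequality, consider the function $U(x,z) := (\f_0 \vee \f_1)(x)$ on $M = X\times S$, which is $\omega$-psh (constant in $z$) and satisfies $U \leq \f_{0,1}$ on $\partial M$ because $\f_0 \vee \f_1 \leq \min(\f_0,\f_1)$. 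Hence $U$ belongs to the family defining the Mabuchi geodesic in Definition \ref{defi: geodesic}, so $U \leq \f$ on $M$, i.e.\ $\f_0 \vee \f_1(x) \leq \f_t(x)$ for all $t \in [0,1]$. Taking the infimum over $t$ gives $\f_0 \vee \f_1 \leq \psi$, hence equality.

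The main obstacle is the precise decomposition of $MA(\f_0\vee \f_1)$ on the set $\{\f_0 = \f_1\}$ inside the contact locus: one cannot simply argue that $\f_0 \vee \f_1$ agrees with $\f_i$ on an open neighborhood of such points. The fix is to invoke the standard locality principle for the non-pluripolar Monge-Amp\`ere operator: if $u,v$ are bounded $\omega$-psh functions and $u = v$ on a Borel set $E$, then $\mathbbm{1}_E\,MA(u) = \mathbbm{1}_E\,MA(v)$. Once this is accepted, the decomposition $A \sqcup B$ gives the formula without further difficulty, and the Kiselman argument for (b) is then entirely formal.
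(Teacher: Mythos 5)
There is a genuine gap, and it sits in your part (a). First, your verification that $f:=\min(\f_0,\f_1)$ satisfies the hypothesis of Theorem \ref{BD} is incorrect: $\omega$-plurisubharmonicity gives $dd^c\f_i\geq -\omega$, i.e. an \emph{upper} bound on $dd^c(-\f_i)$, which does not make $-\f_0,-\f_1$ quasi-psh. The computation you quote from the end of Section 4 uses that $\f_0,\f_1\in\H_\omega$ are \emph{smooth}; here $\f_0,\f_1\in\H_{bd}$ only have locally bounded Laplacian on $\Amp(\a)$, and no global bound $dd^c\f_i\leq C\omega_X$ on $X$ is available, so Theorem \ref{BD} cannot be applied to $\min(\f_0,\f_1)$ as written. (One could salvage this route by writing $\f_i=P_\omega(f_i)$ and noting $\f_0\vee\f_1=P_\omega(\min(f_0,f_1))$ with $\min(f_0,f_1)$ continuous and $dd^c\min(f_0,f_1)\leq C\omega_X$; but that is not what you do, and one must then still pass from the contact set $\{\f_0\vee\f_1=\min(f_0,f_1)\}$ back to $\{\f_0\vee\f_1=\min(\f_0,\f_1)\}$.) Second, the ``locality principle on Borel sets'' you invoke as the fix for the decomposition is false for general bounded $\omega$-psh functions: already in dimension one, $u=\max(\log|z|,-1)$ and $v=\max(2\log|z|,-1)$ agree on the circle $\{|z|=e^{-1}\}$, which carries all of $\Delta u$ and none of $\Delta v$, so $\mathbbm{1}_E\,\Delta u\neq\mathbbm{1}_E\,\Delta v$ on $E=\{u=v\}$. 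Agreement of Monge--Amp\`ere measures on a coincidence set requires extra regularity.

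That extra regularity is exactly how the paper argues: since $\f_0\vee\f_1$, $\f_0$, $\f_1$ all have locally bounded Laplacian on $\Amp(\a)$, they lie in $W^{2,p}_{loc}$ there, their Monge--Amp\`ere measures are absolutely continuous and put no mass on $X\setminus\Amp(\a)$, and \cite[Lemma 7.7]{GT83} guarantees that second derivatives agree a.e. on $\{\f_0\vee\f_1=\f_0\}$ and on $\{\f_0\vee\f_1=\f_1\}$, which yields the stated decomposition. Note also that the paper reaches the Laplacian bound for $\f_0\vee\f_1$ by a different route from the one you attempt: it first proves your part (b) --- which is correct and coincides with the paper's argument (Kiselman's minimum principle for one inequality, the constant-in-$t$ subgeodesic for the other) --- and then uses the uniform Berman--Demailly estimate $\sup_K\Delta_{\omega_X}\f_t\leq C_K n$ along the geodesic to conclude that $-(\f_0\vee\f_1)=\sup_{t}(-\f_t)$ is $C_K\omega_X$-psh near any compact $K\subset\Amp(\a)$. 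So either repair your envelope argument as indicated above, or follow the paper's order (b) then (a); in both cases the Borel-set locality claim must be replaced by the bounded-Laplacian/Gilbarg--Trudinger argument.
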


\begin{proof}

It follows from a classical balayage procedure that goes back to Bedford and Taylor \cite{BT82} that
$MA(\f_0 \vee \f_1)$ is supported on the coincidence set 
$
\{ x \in X \, | \, \f_0 \vee \f_1(x)=\min(\f_0,\f_1)(x) \}
$
This holds true more generally for the Monge-Amp\`ere measure of any 
envelope, namely  $$\1_{\{P(h)<h\}} MA(P(h)) \equiv 0,$$
where $h$ is a bounded lower semcontinuous function.

We have observed in Proposition \ref{pro:kiselman} that
$
x \mapsto \inf_{t \in [0,1]} \f_t(x)
$
is a $\omega$-psh function.
Since it lies both below $\f_0$ and $\f_1$, we infer 
$$
\inf_{t \in [0,1]} \f_t \leq \f_0 \vee \f_1.
$$
Conversely $(t,x) \mapsto \f_0 \vee \f_1(x)$ is a subgeodesic (independent of $t$), hence
for all $t,x$, $ \f_0 \vee \f_1(x) \leq \f_t (x)$. Thus $\p:=\f_0 \vee \f_1=\inf_{t \in [0,1]} \f_t$, hence $\p$ is bounded thanks to Proposition \ref{prop:Semmes}.

By Proposition \ref{pro:kiselman}, $\psi$ is $\omega$-psh, hence $A\omega_X$-psh for some K\"ahler form $\omega_X$ and $A>0$. Thus
 $\sup_X \Delta_{\omega_X} \p \geq -C$ for some $C>0$.

It follows from the work of Berman and Demailly \cite{BD12} (see also \cite[Theorem 1.2]{Ber}) that 
for any compact subset $K \subset \Amp(\a)$, there exists 
$C_K>0$ such that for all $t \in [0,1]$,
$$
\sup_K \Delta_{\omega_X} \f_t < C_K n.
$$
Thus $(-\f_t)$ is a family of $C_K\omega_X$-psh functions in a neighborhood of $K$,
which are uniformly bounded from above. Thus 
$$
-\p=\sup_{0 \leq t \leq 1} (-\f_t)=-\inf_{0 \leq t \leq 1}\f_t
$$
is $C_K\omega_X$-psh near $K$, in particular $\Delta_{\omega_X} \p < C_Kn$. This means that $\psi$ has locally bounded laplacian on $\Amp(\alpha)$.

It follows then from classical arguments that the measure $MA(\f_0 \vee \f_1)$
is absolutely continuous with respect to Lebesgue measure.
Since $\f_0 \vee \f_1,\f_0$ (resp. $\f_0 \vee \f_1, \f_1$) have locally bounded Laplacian in
$\Amp(\a)$, it follows from \cite[Lemma 7.7]{GT83} that their second partial derivatives
agree on $\{\f_0 \vee \f_1=\f_0 \}$ (resp. on $\{\f_0 \vee \f_1=\f_1 \}$), hence
$$
MA(\f_0 \vee \f_1)=\1_{\{\f_0 \vee \f_1=\f_0\}} MA(\f_0)+\1_{\{\f_0 \vee \f_1=\f_1<\f_0\}} MA(\f_1).
$$
We have used here the fact that none of the measures 
$MA(\f_0 \vee \f_1),MA(\f_0)$, $MA(\f_1)$ charges the pluripolar set $X \setminus \Amp(\a)$.
\end{proof}

A basic observation that we shall use on several occasions is the following: 

\begin{lem} \label{lem:maxprince}
Assume $\f_0,\f_1 \in  \H_{bd}$ and let $(\f_t)_{0 \leq t \leq 1}$ be the Mabuchi geodesic joining $\f_0$ to $\f_1$. Then:
$$d_p(\f_0,\f_1) \leq ||\f_1-\f_0||_{L^{\infty}(X)}.$$
Moreover,
\begin{itemize}
\item[(i)]  If $\f_0(x)\leq \f_1(x)$ for some $x \in X$, then $\dot{\f}_1(x) \geq 0$.
\item[(ii)]  If $\f_0(x)\leq \f_1(x)$ for all $x\in X$ then $\dot{\f}_t(x) \geq 0$ for all $x\in X $ and a.e $t\in[0,1]$.
\end{itemize}
\end{lem}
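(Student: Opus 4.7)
The proof splits naturally into three parts and the three assertions can all be derived from convexity of $t \mapsto \f_t(x)$, the envelope characterization of the bounded geodesic, and the distance formula of Theorem \ref{thm:magicFormulaBdd}.

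For the distance estimate, set $A := \|\f_1-\f_0\|_{L^{\infty}(X)}$. The barrier argument behind Proposition \ref{prop:Bo} (identical to the one in the proof of Proposition \ref{prop:Semmes}(ii)) shows that the bounded geodesic satisfies $|\f_t(x)-\f_s(x)| \leq A|t-s|$, whence $|\dot{\f}_1^-(x)| \leq A$ wherever the left derivative exists, which is for a.e. $x$ by convexity of $t\mapsto \f_t(x)$. Since $\f_0,\f_1 \in \H_{bd}$, Theorem \ref{thm:magicFormulaBdd} applies and gives
$$
d_p^p(\f_0,\f_1) = \int_X |\dot{\f}_1|^p\,MA(\f_1) \leq A^p \int_X MA(\f_1) = A^p,
$$
since $MA(\f_1)$ is a probability measure.

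For (i), convexity of $t\mapsto \f_t(x)$ (which follows from the subharmonicity of $z\mapsto \f(x,z)$ together with its $S^1$-invariance) yields $\f_t(x) \leq (1-t)\f_0(x)+t\f_1(x)$, so
$$
\frac{\f_1(x)-\f_t(x)}{1-t} \geq \f_1(x)-\f_0(x) \geq 0 \quad \text{for every } t \in [0,1).
$$
Letting $t\to 1^-$ gives $\dot{\f}_1(x) = \dot{\f}_1^-(x) \geq \f_1(x)-\f_0(x) \geq 0$.

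For (ii), I combine a maximum-principle argument with the one-sided-derivative monotonicity. Viewing $\f_0$ as a function on $X \times S$ independent of $z$, the function
$$
u(x,z) := \max\bigl(\f(x,z),\,\f_0(x)\bigr)
$$
is $\omega$-psh on $X\times S$, equals $\f_0$ on $\{\Re z=0\}$, and equals $\max(\f_1,\f_0) = \f_1$ on $\{\Re z=1\}$ by hypothesis. The envelope characterization of the bounded geodesic (Proposition \ref{prop:Bo}) forces $u \leq \f$, i.e. $\f_t(x) \geq \f_0(x)$ for all $(x,t)$. Dividing by $t>0$ and letting $t\to 0^+$ gives $\dot{\f}_0^+(x) \geq 0$ for every $x \in X$. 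Convexity of $t\mapsto \f_t(x)$ implies that $t\mapsto \dot{\f}_t^+(x)$ is non-decreasing, so $\dot{\f}_t^+(x) \geq 0$ for all $t$. Since for each fixed $x$ the one-sided derivatives agree outside a countable set, we conclude $\dot{\f}_t(x) \geq 0$ for all $x\in X$ and a.e. $t\in[0,1]$.

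The only point where one has to be careful is the first assertion: one cannot invoke the naive bound $d_p \leq \int_0^1 |\dot\f_t|_p\,dt$ directly from a path-length definition in the semi-positive setting (cf. Remark \ref{rem:lenght_semipositive}); one must pass through Theorem \ref{thm:magicFormulaBdd}, which is why the hypothesis $\f_0,\f_1 \in \H_{bd}$ is used.
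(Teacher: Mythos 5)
Your proof is correct and follows essentially the same route as the paper: the bound $d_p(\f_0,\f_1)\leq \|\f_1-\f_0\|_{L^\infty(X)}$ via Theorem \ref{thm:magicFormulaBdd} together with the Lipschitz bound on $\dot\f$, convexity of $t\mapsto\f_t(x)$ for (i), and the envelope/candidate argument for (ii). The only differences are cosmetic: you prove (i) directly with the chord inequality where the paper argues by contradiction, and you obtain monotonicity in (ii) from $\dot\f_0^+\geq 0$ plus convexity (your $\max(\f,\f_0)$ construction is equivalent to the paper's observation that $\f_0$ itself is a candidate in the envelope).
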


By symmetry, if $\f_1(x) \leq \f_0(x)$, it follows that $\dot{\f}_0(x) \leq 0$. Moreover, if $\f_1(x)\leq \f_0(x)$ for all $x\in X$ then $\dot{\f}_t(x) \leq 0$ for a.e. $x,t $.
Here and in the sequel $\dot{\f_0}$, $\dot{\f_1}$ denote the right and left derivative, respectively while we recall that $\dot{\f}_t(x)$ is well defined for a.e $(x,t)$.

\begin{proof}
From Theorem \ref{thm:magicFormulaBdd} we know that
$d_p^p(\f_0,\f_1) =\int_X |\dot{\f}_{0}|^p MA(\f_0).$
Moreover, Proposition \ref{prop:Semmes} insures that $|\dot{\f}_{0}|\leq  ||\f_1-\f_0||_{L^{\infty}(X)}$. Hence, the first statement.

Assume $\dot{\f}_1(x) < 0$. Since $t \mapsto \f_t(x)$ is convex we infer
$\dot{\f_t}(x) \leq \dot{\f_1}(x)<0$. Thus $t \mapsto \f_t(x)$ is decreasing, hence $\f_1(x)< \f_0(x)$, a contradiction. This proves $(i)$.

Assume now that $\f_0(x)\leq \f_1(x)$ for all $x\in X$. Then
$$
\f_0\leq \f_t \leq \f_1.
$$
The first of the inequalities above follows from the fact that by Proposition \ref{prop:Semmes}
$$
\varphi= \sup\{u\quad u\in PSH(M, \omega)\;:\; u\leq \f_{0,1}\; {\rm on}\; M\}
$$
with $\f(x, t+is)= \f_t(x)$ and that $\f_0(x,t+is)= \f_0(x)$ is a subsolution 
(i.e. a candidate in the envelope). The other inequality follows from the fact that 
$\f_1(x,t+is)= \f_1(x)$ is a supersolution of (\ref{Semmes}) since 
$(\omega+dd^c_{x,z} \f_1)^{n+1}=0$ and $\f_1\geq \f_{0,1}$. The same argument shows that $\f_0\leq \f_s \leq \f_t$ for
all $0<s<t$ and $x \in X$,
hence $\dot{\f}_t(x) \geq 0$ for all $x\in X $ and a.e $t\in[0,1]$ since the derivative in time of $\f_t$ is well defined for a.e. $t$.
\end{proof}

We now establish a very useful relation established by Darvas \cite[Proposition 8.1]{Dar14}
when $\omega$ is K\"ahler (see also \cite[Corollary 4.14]{Dar15}).

\begin{prop} \label{prop:pyth}
Assume $\f_0,\f_1 \in  \H_{bd}$. Then for all $p \geq 1$,
$$
d_p^p(\f_0,\f_1)=d_p^p(\f_0,\f_0 \vee \f_1)+d_p^p(\f_0 \vee \f_1, \f_1).
$$
\end{prop}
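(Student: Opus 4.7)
The plan is to deduce the Pythagorean formula from its K\"ahler counterpart \cite[Corollary 4.14]{Dar15} via the approximation scheme already used in the proof of Theorem \ref{thm:magicFormulaBdd}.

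First I would write $\f_i=P_{\omega}(f_i)$ for $i=0,1$ with $f_i\in C^0(X)$ and $dd^c f_i\leq C\omega_X$, and set $g:=\min(f_0,f_1)=-\max(-f_0,-f_1)$. The qpsh argument recalled right after Theorem \ref{thm:magicFormulaBdd} shows $g\in C^0(X)$ with $dd^c g\leq C\omega_X$. A short double inequality then identifies $\f_0\vee \f_1=P_\omega(g)$: on the one hand $\min(\f_0,\f_1)\leq g$ forces $\f_0\vee \f_1\leq P_\omega(g)$; on the other $P_\omega(g)\leq f_i$ together with $\omega$-plurisubharmonicity of $P_\omega(g)$ forces $P_\omega(g)\leq P_\omega(f_i)=\f_i$, hence $P_\omega(g)\leq \min(\f_0,\f_1)$. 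In particular $\f_0\vee \f_1\in \H_{bd}$, so Theorem \ref{thm:magicFormulaBdd} is applicable to each of the three pairs $(\f_0,\f_1)$, $(\f_0,\f_0\vee \f_1)$, $(\f_0\vee \f_1,\f_1)$.

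Next, for $\e>0$ I would set $\omega_\e=\omega+\e\omega_X$ and define
$$
\f_{0,\e}:=P_{\omega_\e}(f_0),\qquad \f_{1,\e}:=P_{\omega_\e}(f_1),\qquad \psi_\e:=P_{\omega_\e}(g),
$$
which decrease to $\f_0,\f_1,\f_0\vee \f_1$ respectively as $\e\downarrow 0$. The same double inequality with $\omega_\e$ in place of $\omega$ yields $\psi_\e=\f_{0,\e}\vee_\e \f_{1,\e}$, the $\omega_\e$-rooftop. Since $\omega_\e$ is K\"ahler and $\f_{0,\e},\f_{1,\e}$ are bounded $\omega_\e$-psh, the K\"ahler Pythagorean identity \cite[Corollary 4.14]{Dar15} gives
$$
d_{p,\e}^p(\f_{0,\e},\f_{1,\e}) \;=\; d_{p,\e}^p(\f_{0,\e},\psi_\e)+d_{p,\e}^p(\psi_\e,\f_{1,\e}).
$$
Letting $\e\to 0$ and applying Theorem \ref{thm:magicFormulaBdd} to each of the three pairs, the left-hand side converges to $d_p^p(\f_0,\f_1)$ and the two right-hand terms to $d_p^p(\f_0,\f_0\vee \f_1)$ and $d_p^p(\f_0\vee \f_1,\f_1)$; the identity then follows.

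The hard part will be the identification $\psi_\e=\f_{0,\e}\vee_\e \f_{1,\e}$: without this alignment the $\e$-level K\"ahler formula and the $\e\to 0$ limit provided by Theorem \ref{thm:magicFormulaBdd} cannot be spliced together. It works here because all three envelopes $\f_{0,\e},\f_{1,\e},\psi_\e$ are built from the \emph{same} continuous defining data $f_0,f_1,g$ independent of $\e$, which is precisely the role played by the $\H_{bd}$ hypothesis.
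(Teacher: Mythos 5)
Your argument is correct, and it reroutes the proof around the step on which the paper spends most of its effort. The paper applies Darvas' K\"ahler identity \cite[Corollary 4.14]{Dar15} at level $\e$ directly to the fixed endpoints $\f_0,\f_1$, with midpoint the $\omega_\e$-rooftop $\f_0\vee_\e\f_1$; the entire difficulty is then to show that $d_{p,\e}(\f_0\vee\f_1,\f_0\vee_\e\f_1)\to 0$, which is handled by the monotonicity-in-$\e$ argument of Proposition \ref{quasi decreasing}, the bound $d_{p,\e'}\leq 2I_{p,\e'}$ from Proposition \ref{prop:upperbound}, and the $I_p$-convergence arguments of Proposition \ref{prop:approx2}. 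You avoid comparing the two rooftops altogether: writing $\f_i=P_{\omega}(f_i)$ and observing that $P_{\omega_\e}(\min(f_0,f_1))=P_{\omega_\e}(f_0)\vee_\e P_{\omega_\e}(f_1)$ (and the same identity at $\e=0$, which also shows $\f_0\vee\f_1\in\H_{bd}$), you make all three vertices of the $\e$-level identity envelopes of $\e$-independent continuous obstacles, so each term converges by the approximation scheme of Theorem \ref{thm:magicFormulaBdd}; this is the genuinely new structural input of your proof, and it is valid. One bookkeeping point: what you actually invoke is the convergence $d_{p,\e}^p\bigl(P_{\omega_\e}(f),P_{\omega_\e}(h)\bigr)\to d_p^p\bigl(P_\omega(f),P_\omega(h)\bigr)$, which is what the \emph{proof} of Theorem \ref{thm:magicFormulaBdd} establishes (its statement is only the endpoint formula); alternatively it follows from the statement together with the one-line bridge $d_{p,\e}\bigl(P_{\omega_\e}(f),P_\omega(f)\bigr)\leq 2I_{p,\e}\bigl(P_{\omega_\e}(f),P_\omega(f)\bigr)\to 0$, in the spirit of Propositions \ref{prop:upperbound} and \ref{prop:approx2}. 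With that said explicitly, your route trades the paper's analytic comparison of the two rooftops for a purely formal compatibility of envelopes, at the (harmless) cost of using the representation of $\H_{bd}$ potentials by continuous data more heavily.
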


\begin{proof}
 We proceed by approximation, so as to reduce to the K\"ahler case. The identity
 is known to hold for $d_{p,\e}$ and $\f_0 \vee_\e \f_1$, where
 $d_{p,\e}$ denotes the distance associated to the K\"ahler form $\omega_\e=\omega+\e \omega_X$
 and $\f_0 \vee_\e \f_1$ is the greatest $\omega_\e$-psh function that lies below $\min(\f_0,\f_1)$.
 
 Using Theorem \ref{thm:magicFormulaBdd} and  the triangle inequality,
 the proof boils down
 to check that $d_{p,\e}(\f_0 \vee \f_1, \f_0 \vee_\e \f_1) \rightarrow 0$ as $\e \rightarrow 0$. The same arguments used in the proof of Proposition \ref{quasi decreasing} yield 
 $$d_{p, \e}(\f_0 \vee \f_1, \f_0 \vee_\e \f_1) \leq d_{p, \e'}(\f_0 \vee \f_1, \f_0 \vee_\e \f_1), \quad \varepsilon< \varepsilon'.$$
We claim that $d_{p, \e'}(\f_0 \vee \f_1, \f_0 \vee_\e \f_1)$ goes to zero as $\varepsilon$ goes to zero since $\f_0 \vee_\e \f_1$ decreases to $\f_0 \vee \f_1$ as $\e\rightarrow 0$. Indeed, observe that $\f_0 \vee \f_1, \f_0 \vee_\e \f_1\in \mathcal{E}^p(X,\omega_\e')\cap L^\infty(X)$ and by Proposition \ref{prop:upperbound} we know that 
$$d_{p, \e'}(\f_0 \vee \f_1, \f_0 \vee_\e \f_1)\leq 2 I_{p, \e'}(\f_0 \vee \f_1, \f_0 \vee_\e \f_1).$$
The same arguments in the proof of Proposition \ref{prop:approx2} then show that 
$I_{p, \e'}(\f_0 \vee \f_1, \f_0 \vee_\e \f_1)\rightarrow 0$ as $\varepsilon$ goes to zero. The conclusion then follows.
\end{proof}

We note for later use the following consequence:

\begin{cor} \label{cor:min}
If  $\f_0,\f_1 \in  \H_{bd}$ then
$$
d_p(\f_0,\f_0 \vee \f_1) \leq d_p(\f_0,\f_1).
$$
\end{cor}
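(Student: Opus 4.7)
\medskip

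\noindent\textbf{Proof proposal for Corollary \ref{cor:min}.}
The plan is to read the corollary off directly from the Pythagorean-type identity of Proposition~\ref{prop:pyth}. Indeed, applying that proposition to $\f_0,\f_1 \in \H_{bd}$ gives
$$
d_p^p(\f_0,\f_1)=d_p^p(\f_0,\f_0 \vee \f_1)+d_p^p(\f_0 \vee \f_1,\f_1).
$$
Since $d_p$ is non-negative, the term $d_p^p(\f_0 \vee \f_1,\f_1)$ is $\geq 0$, and dropping it yields
$$
d_p^p(\f_0,\f_0 \vee \f_1)\leq d_p^p(\f_0,\f_1).
$$
Taking $p$-th roots (which preserves the inequality since $p\geq 1$ and both sides are non-negative) gives the stated bound. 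There is essentially no obstacle here: all the content has already been absorbed into Proposition~\ref{prop:pyth}, and the only thing left is to observe that discarding the second summand can only decrease the left-hand side. The same argument of course also shows the symmetric inequality $d_p(\f_0\vee\f_1,\f_1)\leq d_p(\f_0,\f_1)$, but this is not what is stated.
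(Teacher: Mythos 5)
Your proof is correct and is exactly the argument the paper intends: Corollary \ref{cor:min} is stated as an immediate consequence of the Pythagorean formula in Proposition \ref{prop:pyth}, obtained by dropping the non-negative term $d_p^p(\f_0\vee\f_1,\f_1)$ and taking $p$-th roots. Nothing further is needed.
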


\subsection{Comparing $d_p$ and $I_p$}

The goal of this section is to establish that $d_p$ and $I_p$ are equivalent, extending 
\cite[Theorem 5.5]{Dar15}:

\begin{thm} \label{thm:equiv}
For all $\f_0,\f_1 \in  \H_{bd}$,
$$
2^{-1} d_p(\f_0,\f_1) \leq I_p(\f_0,\f_1) \leq 2^{4+(2n-1)/p}  d_p(\f_0,\f_1).
$$
\end{thm}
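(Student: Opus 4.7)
The plan is to reduce to the smooth K\"ahler case by the standard approximation $\omega_\varepsilon=\omega+\varepsilon\omega_X$ and then pass to the limit $\varepsilon\to 0$, using the convergence results already established. Since $\varphi_0,\varphi_1\in\mathcal{H}_{bd}$ are bounded $\omega$-psh, they are bounded $\omega_\varepsilon$-psh for every $\varepsilon>0$, and therefore belong to $\mathcal{E}^p(X,\omega_\varepsilon)\cap L^\infty(X)$. In the K\"ahler class $\{\omega_\varepsilon\}$ one can invoke Darvas' comparison theorem \cite[Theorem 5.5]{Dar15} to obtain
$$
2^{-1}\,d_{p,\varepsilon}(\varphi_0,\varphi_1)\;\leq\; I_{p,\varepsilon}(\varphi_0,\varphi_1)\;\leq\; 2^{4+(2n-1)/p}\,d_{p,\varepsilon}(\varphi_0,\varphi_1).
$$
One then sends $\varepsilon\to 0$: Proposition \ref{prop:approx2} gives $I_{p,\varepsilon}(\varphi_0,\varphi_1)\to I_p(\varphi_0,\varphi_1)$, while Proposition \ref{quasi decreasing} (together with Definition \ref{def: bd}) ensures that $d_{p,\varepsilon}(\varphi_0,\varphi_1)$ converges to $d_p(\varphi_0,\varphi_1)$. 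Passing both inequalities to the limit yields the theorem.

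A more self-contained route, which avoids quoting \cite{Dar15} as a black box, would use the Pythagorean identities established earlier. Propositions \ref{prop:pyth} and \ref{prop:python} split
$$d_p^p(\varphi_0,\varphi_1)=d_p^p(\varphi_0,\varphi_0\vee\varphi_1)+d_p^p(\varphi_0\vee\varphi_1,\varphi_1),\qquad I_p^p(\varphi_0,\varphi_1)=I_p^p(\varphi_0,m)+I_p^p(m,\varphi_1),$$
with $m=\max(\varphi_0,\varphi_1)$. This reduces the analysis to the ordered case $\varphi_0\leq\varphi_1$. In that case Lemma \ref{lem:maxprince}\,(ii) gives $\dot\varphi_t\geq 0$ along the bounded geodesic, and convexity of $t\mapsto\varphi_t(x)$ gives the pointwise pinching $0\leq\dot\varphi_0(x)\leq \varphi_1(x)-\varphi_0(x)\leq\dot\varphi_1(x)$.

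The lower bound $d_p\leq 2 I_p$ then falls out immediately: Theorem \ref{thm:magicFormulaBdd} yields
$$d_p^p(\varphi_0,\varphi_1)=\int_X \dot\varphi_0^p\, MA(\varphi_0)\leq \int_X(\varphi_1-\varphi_0)^p\,MA(\varphi_0),$$
and symmetrically with $MA(\varphi_1)$, so averaging produces $d_p^p\leq 2I_p^p$. Likewise, combining $\dot\varphi_1\geq\varphi_1-\varphi_0$ with Lemma \ref{lem:controleL2} controls $\int(\varphi_1-\varphi_0)^p MA(\varphi_1)$ by $d_p^p$.

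The main obstacle is the upper bound $I_p\leq 2^{4+(2n-1)/p}\,d_p$ in the ordered case: controlling $\int(\varphi_1-\varphi_0)^p\, MA(\varphi_0)$ by $d_p^p$ is delicate because $MA(\varphi_0)$ need not see the one-sided derivative $\dot\varphi_0$, which is only an \emph{upper} bound on $\varphi_1-\varphi_0$ and may be much smaller. In \cite{Dar15} this step uses the rooftop envelope $P(\varphi_0,\varphi_1)=\varphi_0\vee\varphi_1$, its Monge-Amp\`ere concentration on the contact set (Proposition \ref{pro:kis}), and the Laplacian bounds of \cite{BD12, Ber} reproved here via Theorem \ref{BD}. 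Because all these ingredients are available in $\mathcal{H}_{bd}$, the approximation argument is cleaner: one simply pushes the K\"ahler inequalities through the limits $I_{p,\varepsilon}\to I_p$ and $d_{p,\varepsilon}\to d_p$ already guaranteed by Propositions \ref{prop:approx2} and \ref{quasi decreasing}.
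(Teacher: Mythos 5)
Your primary route --- approximate $\omega$ by the K\"ahler forms $\omega_\e$, invoke the K\"ahler-case comparison for $\f_0,\f_1\in\E^p(X,\omega_\e)\cap L^\infty$, and pass to the limit via Proposition \ref{prop:approx2} and Proposition \ref{quasi decreasing} together with Definition \ref{def: bd} --- is sound, and in fact the paper opens its proof with exactly this observation ($d_p=\lim_\e d_{p,\e}$, $I_p=\lim_\e I_{p,\e}$, ``so it suffices to establish these inequalities when $\omega$ is a K\"ahler form''). The paper then deliberately does something different: it gives a direct proof valid for semi-positive big $\omega$, where the bound $d_p\le 2I_p$ is Proposition \ref{prop:upperbound} and the hard bound comes from Lemma \ref{lem:bary}, i.e. $d_p\bigl(\f_0,\tfrac{\f_0+\f_1}{2}\bigr)\le 2^{2+n/p}d_p(\f_0,\f_1)$, combined with the Pythagorean formula (Proposition \ref{prop:pyth}) applied to $\f_0$ and the midpoint $\tfrac{\f_0+\f_1}{2}$, which is what produces the explicit constant $2^{4+(2n-1)/p}$. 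Two caveats on your version. First, \cite[Theorem 5.5]{Dar15} gives the equivalence with an unspecified constant $C(n,p)$; the particular constants in the statement are an output of the paper's own chain of lemmas, so quoting Darvas as a black box proves the equivalence (which is all that is used later) but not literally the stated inequality. Second, in your ``self-contained'' sketch the claimed reduction to the ordered case is shakier than you suggest for the hard direction: the two Pythagorean identities pivot at different points ($d_p$ at the rooftop $\f_0\vee\f_1$, $I_p$ at $\max(\f_0,\f_1)$), and exploiting the $I_p$-splitting would require an estimate of the form $d_p(\f_i,\max(\f_0,\f_1))\le C\,d_p(\f_0,\f_1)$, which is not among the available tools (Corollary \ref{cor:min} concerns the rooftop, not the max; the max version is only obtained \emph{after} Theorem \ref{thm:equiv}, e.g. in Lemma \ref{lem:controlesup}) --- this is precisely why the paper routes the argument through the midpoint via Lemma \ref{lem:bary}. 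In short: your approximation argument buys brevity at the cost of importing the K\"ahler theorem and losing the explicit constants, while the paper's direct proof yields the constants, avoids the detour through $\omega_\e$ at this stage, and generates intermediate estimates (Lemma \ref{lem:dar2}, Proposition \ref{prop:upperbound}, Lemma \ref{lem:bary}) that are reused repeatedly afterwards.
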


It follows from Definition \ref{distance_semipos} and Proposition \ref{prop:approx2} that
$$
d_p(\f_0,\f_1)=\lim_{\e \rightarrow 0} d_{p,\e}(\f_0,\f_1)
\text{ and }
I_p(\f_0,\f_1)=\lim_{\e \rightarrow 0} I_{p,\e}(\f_0,\f_1),
$$
so it suffices to establish these inequalities when $\omega$ is a K\"ahler form.

We  nevertheless give a direct proof, valid when $\omega$ is merely semi-positive, with several intermediate results  of independent interest.
Several of these results have been obtained by Darvas in \cite{Dar13,Dar14,Dar15}
when $\omega$ is K\"ahler.

\begin{lem} \label{lem:dar2}
 Assume $\f_0,\f_1 \in  \H_{bd}$ satisfy $\f_0 \leq \f_1$. Then
 
 \smallskip
 
1) $d_p\left(\f_1, \frac{\f_0+\f_1}{2} \right)  \leq  d_p(\f_0, \f_1)$;

\smallskip

2) $d_p(\f_0,\f_1) \leq 2^{1+n/p} d_p(\f_0/2,\f_1/2)$;

\smallskip

3) if $\f_1=0$ then $d_p(\f_0,0) \geq 2 d_p(\f_0/2,0)$;

\smallskip

4) If $\p \in   \H_{bd}$ is such that $\f_0 \leq \p \leq \f_1$, then
$$
\max\{ d_p(\f_0,\p); d_p(\p,\f_1) \} \leq d_p(\f_0,\f_1).
$$
\end{lem}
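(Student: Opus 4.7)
The plan is to prove the four items in the order (4), (1), (2), (3), using three ingredients throughout: the envelope definition of the Mabuchi geodesic (Definition~\ref{defi: geodesic}), the sign information on time derivatives from Lemma~\ref{lem:maxprince}, and the magic formula of Theorem~\ref{thm:magicFormulaBdd}. The common mechanism is: exhibit an auxiliary subgeodesic with the correct boundary values, use the envelope characterization to compare it with the Mabuchi geodesic joining the relevant endpoints, and integrate the resulting pointwise bound on time derivatives against the appropriate Monge--Amp\`ere measure.

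For (4), let $(\f_t)$ denote the Mabuchi geodesic from $\f_0$ to $\f_1$ and $(\p_t)$ that from $\f_0$ to $\p$. Since $\p \le \f_1$, every candidate in the envelope defining $\p_t$ also competes for $\f_t$, so $\p_t \le \f_t$ on $X \times S$ with equality at $t=0$; dividing by $t$ and letting $t \to 0^+$ gives $\dot\p_0 \le \dot\f_0$. Lemma~\ref{lem:maxprince}(ii), applied to both geodesics (whose endpoints satisfy $\f_0 \le \p$ and $\f_0 \le \f_1$), yields $0 \le \dot\p_0 \le \dot\f_0$, whence $|\dot\p_0|^p \le |\dot\f_0|^p$ pointwise. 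Integrating against $MA(\f_0)$ and invoking Theorem~\ref{thm:magicFormulaBdd} gives $d_p(\f_0,\p) \le d_p(\f_0,\f_1)$. The dual inequality $d_p(\p,\f_1) \le d_p(\f_0,\f_1)$ follows by the symmetric argument at $t=1$, integrating against $MA(\f_1)$. Item~(1) is then the special case of~(4) with $\p = (\f_0 + \f_1)/2$, a bounded $\omega$-psh function sandwiched between $\f_0$ and $\f_1$.

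For (2) and (3), the key observation is that if $\f = \f_t$ is the $\omega$-geodesic between two bounded $\omega$-psh functions, then $\f/2$ is itself $\omega$-plurisubharmonic on $X \times S$:
$$\omega + dd^c(\f/2) = \tfrac{1}{2}\omega + \tfrac{1}{2}(\omega+dd^c\f) \ge 0,$$
so it is an $\omega$-subgeodesic joining the halved endpoints. In (2), letting $\gamma_t$ denote the $\omega$-geodesic from $\f_0/2$ to $\f_1/2$, the envelope yields $\f_t/2 \le \gamma_t$ with equality at $t=0$, hence $\tfrac{1}{2}\dot\f_0 \le \dot\gamma_0$, both non-negative by Lemma~\ref{lem:maxprince}. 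Combined with the elementary lower bound
$$MA(\f_0/2) = V^{-1}\bigl(\tfrac{1}{2}\omega + \tfrac{1}{2}\omega_{\f_0}\bigr)^n \ge 2^{-n} MA(\f_0),$$
Theorem~\ref{thm:magicFormulaBdd} produces
$$d_p^p(\f_0,\f_1) = \int_X |\dot\f_0|^p\, MA(\f_0) \le 2^{p+n}\int_X |\dot\gamma_0|^p\, MA(\f_0/2) = 2^{p+n}\, d_p^p(\f_0/2,\f_1/2).$$
For (3), apply the same halving to the time-reversed geodesics $\tilde\gamma_t := \gamma_{1-t}$ and $\tilde\beta_t := \beta_{1-t}$ from $0$ to $\f_0$ and $\f_0/2$, respectively. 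Then $\tilde\gamma/2$ is an $\omega$-subgeodesic from $0$ to $\f_0/2$, so $\tilde\gamma_t/2 \le \tilde\beta_t$ with equality at $t=0$; both derivatives at $t=0$ are non-positive by Lemma~\ref{lem:maxprince} (since $0 \ge \f_0$). Taking absolute values gives $|\dot\gamma_1| \ge 2|\dot\beta_1|$, and integrating against $MA(0) = \omega^n/V$ via Theorem~\ref{thm:magicFormulaBdd} yields $d_p(\f_0,0) \ge 2\, d_p(\f_0/2,0)$.

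The main obstacle will be verifying that Theorem~\ref{thm:magicFormulaBdd} applies to the auxiliary pairs appearing above -- the scaled potentials $(\f_0/2,\f_1/2)$ in (2)--(3), and the midpoints $(\f_0,(\f_0+\f_1)/2)$, $((\f_0+\f_1)/2,\f_1)$ in~(1) -- since $\H_{bd}$ is not obviously stable under scaling or convex combination. This can be handled either by first approximating $\f_0,\f_1$ by smooth elements of $\H_\omega$ (for which the halved and averaged potentials remain in $\H_\omega \subset \H_{bd}$) and passing to the limit via Proposition~\ref{prop:approx2}, or by running the proof first in the K\"ahler regularization $\omega_\e = \omega + \e\omega_X$ -- where the results of \cite{Dar15} apply directly -- and letting $\e \to 0$ using Definition~\ref{def: bd} and Proposition~\ref{quasi decreasing}.
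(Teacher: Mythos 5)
Your proof is correct and follows essentially the same route as the paper's: comparison of the relevant geodesics via the envelope definition (using that $\f_t/2$ and the geodesic with the smaller endpoint are competitors), the sign of the endpoint derivatives from Lemma \ref{lem:maxprince}, the bound $MA(\f_0)\leq 2^n MA(\f_0/2)$, and Theorem \ref{thm:magicFormulaBdd}; deducing 1) from 4) and time-reversing in 3) are only cosmetic reorderings. The caveat you flag about whether the averaged and halved potentials lie in $\H_{bd}$ is one the paper passes over silently, and your proposed remedies (smooth approximation in $\H_\omega$, or working with $\omega_\e$ and letting $\e\to 0$) are consistent with how analogous limits are handled elsewhere in the paper.
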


\begin{proof}

Let $\f_t$ (resp. $\p_t$) denote the Mabuchi geodesic joining $\f_0$ (resp.  $(\f_0+\f_1)/2$) to $\f_1$.
Since $\f_0 \leq \f_1 $, it follows from Lemma \ref{lem:maxprince}.ii that
$t \mapsto \f_t$, $t \mapsto \p_t$ are increasing and
$
{\f_t}\leq \p_t 
$
hence
$$
\frac{\f_t-\f_1}{t-1} \geq \frac{\p_t-\p_1}{t-1}
$$
since $\f_1=\p_1$.
Therefore
$
\dot{\f}_1 \geq \dot{\p}_1 \geq 0
$
and we infer
$$
 \int_X  |\dot{\p}_1|^p MA(\p_1)=d_p\left(\f_1, \frac{\f_0+\f_1}{2} \right)^p  \leq  d_p(\f_0, \f_1)^p
 =\int_X  |\dot{\f}_1|^p MA(\f_1).
$$
This together with Theorem \ref{thm:magicFormulaBdd} proves 1).

\smallskip

Let now $(\f_t)$ (resp. $(\p_t)$) denote the geodesic joining $\f_0$ to $\f_1$
(resp. $\f_0/2$ to $\f_1/2$). Observe that $t \mapsto \f_t,\p_t$ are increasing hence
$\dot{\f}_0 \geq 0$.
The family $(\f_t/2)$ is a subgeodesic joining $\f_0/2$ to $\f_1/2$, hence $\f_t/2 \leq \p_t$
and
$$
0 \leq \frac{\dot{\f}_0}{2} \leq \dot{\p}_0 \Longrightarrow 
\left| \dot{\f}_0 \right|^p \leq 2^p |\dot{\p}_0|^p.
$$
Moreover $MA(\f_0) \leq 2^n MA(\f_0/2)$, so we infer
$$
d_p(\f_0,\f_1)^p=\int_X \left| \dot{\f}_0 \right|^p MA(\f_0)
\leq 2^{n+p} d_p(\f_0/2,\f_1/2)^p,
$$
which proves 2).
A similar argument shows that 
$$
0 \leq \dot{\p}_1\leq \frac{\dot{\f}_1}{2} \Longrightarrow 
\left| \dot{\p}_1 \right|^p \leq 2^{-p} |\dot{\f}_1|^p.
$$
Now $MA(\f_1/2)=MA(\f_1)=MA(0)$ when $\f_1=0$, hence
$$
d_p(\f_0,0)^p=\int_X \left| \dot{\f}_1 \right|^p MA(0)
\geq 2^{p} d_p(\f_0/2,0)^p,
$$
which yields 3).

\smallskip

It remains to prove 4). Let $(\f_t)_{0 \leq t \leq 1}$ (resp. $(\p_t)_{0 \leq t \leq 1}$) be the geodesic joining $\f_0$ to $\f_1$
(resp. $\f_0$ to $\p$). Observe that $\f_0=\p_0$ and $\p_t \leq \f_t$, hence $\dot{\p}_0 \leq \dot{\f}_0$.
Moreover $0 \leq \dot{\p}_0$ since $t \mapsto \p_t(x)$ is increasing. We infer
$$
d_p(\f_0,\p)^p =\int_X |\dot{\p}_0|^p MA(\f_0) \leq \int_X |\dot{\f}_0|^p MA(\f_0)=d_p(\f_0,\f_1)^p.
$$
The other inequality is proved similarly.
\end{proof}

%
%
%

\begin{prop} \label{prop:upperbound}
For all $\f_0,\f_1 \in  \H_{bd}$, 
$$
0 \leq d_p(\f_0,\f_1)  \leq 2 I_p(\f_0,\f_1).
$$
Moreover if $\f_0 \leq \f_1$ then $ I_p(\f_0,\f_1) \leq \left( \int_X (\f_1-\f_0)^p MA(\f_0) \right)^{1/p}$ and
$$
 d_p(\f_0,\f_1)  \leq  \left( \int_X (\f_1-\f_0)^p MA(\f_0) \right)^{1/p} \leq   2^{1+n/p} d_p(\f_0,\f_1).
$$
\end{prop}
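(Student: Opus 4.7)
My approach is to establish the four assertions in increasing order of difficulty: first the two ordered-case statements, then the general $d_p \leq 2 I_p$, and finally the hard lower bound $\bigl(\int (\f_1-\f_0)^p MA(\f_0)\bigr)^{1/p} \leq 2^{1+n/p} d_p$.

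For the ordered case $\f_0 \leq \f_1$, expanding $2 I_p^p = \int (\f_1-\f_0)^p MA(\f_0) + \int (\f_1-\f_0)^p MA(\f_1)$ and bounding the second term by the first via Lemma~\ref{lem:controleL2} immediately yields the inequality for $I_p$. For the corresponding bound on $d_p$, let $(\f_t)$ denote the Mabuchi geodesic joining $\f_0$ to $\f_1$; Lemma~\ref{lem:maxprince} ensures $\dot{\f}_0 \geq 0$ pointwise, and the convexity of $t \mapsto \f_t(x)$ gives $\dot{\f}_0(x) \leq \f_1(x)-\f_0(x)$. Theorem~\ref{thm:magicFormulaBdd} then yields $d_p^p = \int |\dot{\f}_0|^p MA(\f_0) \leq \int (\f_1-\f_0)^p MA(\f_0)$.

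For the general bound $d_p \leq 2 I_p$, I would set $u := \f_0 \vee \f_1$ and apply the Pythagorean identity of Proposition~\ref{prop:pyth}: $d_p^p(\f_0,\f_1) = d_p^p(u,\f_0) + d_p^p(u,\f_1)$. Since $u \leq \f_0$ and $u \leq \f_1$, the ordered-case bound just established gives $d_p^p(u,\f_i) \leq \int (\f_i - u)^p MA(u)$. The explicit formula for $MA(u)$ in Proposition~\ref{pro:kis} localizes each integral to the relevant contact set: $\int (\f_1-u)^p MA(u) = \int_{\{u=\f_0\}} (\f_1-\f_0)^p MA(\f_0) \leq \int_{\{\f_0 < \f_1\}} |\f_0-\f_1|^p MA(\f_0)$, and symmetrically $\int (\f_0-u)^p MA(u) \leq \int_{\{\f_1 < \f_0\}} |\f_0-\f_1|^p MA(\f_1)$. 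Summing gives $d_p^p \leq \int_X |\f_0 - \f_1|^p [MA(\f_0)+MA(\f_1)] = 2 I_p^p$, hence $d_p \leq 2^{1/p} I_p \leq 2 I_p$.

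The hard part is the final lower bound, since the pointwise inequality $\dot{\f}_0 \leq \f_1 - \f_0$ goes the \emph{wrong} direction. My strategy is to combine the halving estimate of Lemma~\ref{lem:dar2}~(2), namely $d_p(\f_0,\f_1) \leq 2^{1+n/p} d_p(\f_0/2, \f_1/2)$, with the Monge-Amp\`ere comparison $MA(\f_0) \leq 2^n MA(\f_0/2)$ obtained from $\omega_{\f_0/2}^n = 2^{-n}(\omega+\omega_{\f_0})^n \geq 2^{-n}\omega_{\f_0}^n$ via the binomial expansion. Applying the ordered upper bound for $d_p$ to the halved pair gives $d_p(\f_0/2,\f_1/2) \leq \tfrac{1}{2}\bigl(\int (\f_1-\f_0)^p MA(\f_0/2)\bigr)^{1/p}$, and the halving estimate then provides a lower bound of the form $\bigl(\int (\f_1-\f_0)^p MA(\f_0/2)\bigr)^{1/p} \geq 2^{-n/p} d_p$. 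The main obstacle is that naive chaining of these facts produces the wrong constant; extracting precisely $2^{1+n/p}$ will likely require iterating the argument with the Mabuchi midpoint $\f_{1/2}$, exploiting $d_p(\f_0,\f_{1/2}) = d_p/2$ from Proposition~\ref{prop:bddmin}, or alternatively reducing the K\"ahler subcase to \cite[Theorem~5.5]{Dar15} and passing to the semi-positive limit via Theorem~\ref{thm:approx0} and Proposition~\ref{prop:approx2}.
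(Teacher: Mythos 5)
Your handling of the first three assertions is correct. The ordered-case bounds are exactly the paper's argument (Lemma \ref{lem:controleL2}, Lemma \ref{lem:maxprince}, Theorem \ref{thm:magicFormulaBdd}). For $d_p\leq 2I_p$ you take a genuinely different route: the paper just runs the triangle inequality through the pointwise maximum $\max(\f_0,\f_1)$ and applies the ordered-case bound to each piece, while you go through the rooftop envelope $\f_0\vee\f_1$ via the Pythagorean formula (Proposition \ref{prop:pyth}) and the description of $MA(\f_0\vee\f_1)$ (Proposition \ref{pro:kis}). This works, and even gives the slightly better constant $2^{1/p}$; the only caution is that the paper's proof of Proposition \ref{prop:pyth} itself invokes Proposition \ref{prop:upperbound} for the K\"ahler forms $\omega_{\e'}$, so you should note that the K\"ahler case you ultimately rely on is independent (it is supplied by \cite{Dar15}) — not a true circularity, but it must be said; the paper's max-route needs neither \ref{prop:pyth} nor \ref{pro:kis}.

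The genuine gap is the final inequality, and your halving strategy cannot be repaired as stated. To bound $\int_X(\f_1-\f_0)^p MA(\f_0)$ from above by $d_p^p$ you need a \emph{lower} bound for $d_p$, and for an ordered pair $\p_0\leq\p_1$ in $\H_{bd}$ the available lower bound sits at the \emph{larger} endpoint: by convexity $\dot{\p}_1\geq \p_1-\p_0$ (a fact you never record), so $\int_X(\p_1-\p_0)^p MA(\p_1)\leq \int_X(\dot{\p}_1)^p MA(\p_1)=d_p^p(\p_0,\p_1)$. For the halved pair $(\f_0/2,\f_1/2)$ the larger endpoint is $\f_1/2$, whose Monge--Amp\`ere measure bears no relation to $MA(\f_0)$; that is why your chain only yields the useless lower bound $\bigl(\int_X(\f_1-\f_0)^p MA(\f_0/2)\bigr)^{1/p}\geq 2^{-n/p}d_p$, i.e.\ the wrong quantity in the wrong direction, not merely a wrong constant. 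The paper's missing idea is to use the \emph{Euclidean midpoint} $\frac{\f_0+\f_1}{2}$: since $\omega_{(\f_0+\f_1)/2}=\frac12(\omega_{\f_0}+\omega_{\f_1})\geq\frac12\,\omega_{\f_0}$ one has $MA(\f_0)\leq 2^n MA\bigl(\frac{\f_0+\f_1}{2}\bigr)$, hence
$$
\int_X(\f_1-\f_0)^p MA(\f_0)\;\leq\; 2^{n+p}\int_X\Bigl(\tfrac{\f_0+\f_1}{2}-\f_0\Bigr)^p MA\Bigl(\tfrac{\f_0+\f_1}{2}\Bigr)\;\leq\; 2^{n+p}\, d_p^p\Bigl(\f_0,\tfrac{\f_0+\f_1}{2}\Bigr),
$$
the second inequality being the larger-endpoint bound applied to the ordered pair $\f_0\leq\frac{\f_0+\f_1}{2}$, and then Lemma \ref{lem:dar2}.4 gives $d_p\bigl(\f_0,\frac{\f_0+\f_1}{2}\bigr)\leq d_p(\f_0,\f_1)$ because $\f_0\leq\frac{\f_0+\f_1}{2}\leq\f_1$; this is exactly the constant $2^{1+n/p}$. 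Your fallback of quoting \cite[Theorem 5.5]{Dar15} in the K\"ahler case and letting $\e\to 0$ via Theorem \ref{thm:approx0} and Proposition \ref{prop:approx2} is plausible in spirit but is not carried out, and Darvas's comparison as cited does not isolate the quantity $\int_X(\f_1-\f_0)^p MA(\f_0)$ with this constant; so as written the last inequality remains unproved.
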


\begin{proof} 
We first assume that $\f_0 \leq \f_1$. 
The inequality 
$$
 I_p(\f_0,\f_1) \leq \left( \int_X (\f_1-\f_0)^p MA(\f_0) \right)^{1/p}
$$ 
follows from Lemma \ref{lem:controleL2}.
Let 
$(\f_t)$ be the geodesic joining $\f_0$ to $\f_1$. It follows from Lemma \ref{lem:maxprince} that
$
0 \leq \dot{\f}_0 \leq \f_1-\f_0 \leq \dot{\f}_1
$
hence 
\begin{equation}\label{bound: f_1}
\int_X (\f_1-\f_0)^p MA(\f_1) \leq \int_X (\dot{\f}_1)^p MA(\f_1) =d_p(\f_0,\f_1)^p
\end{equation}
and similarly $d_p(\f_0,\f_1)^p \leq \int_X (\f_1-\f_0)^p MA(\f_0)$.

We now show that 
$ \int_X (\f_1-\f_0)^p MA(\f_0)   \leq   2^{n+p} d(\f_0,\f_1)^p$.
Observe that $\frac{\f_0+\f_1}{2} \in  \H_{bd}$ with 
$
 MA(\f_0)   \leq  2^n \, MA\left( \frac{\f_0+\f_1}{2} \right) 
$
hence 
\begin{eqnarray*}
\int_X (\f_1-\f_0)^p MA(\f_0) &=& 2^p \int_X \left(\frac{\f_0+\f_1}{2}-\f_0 \right)^p MA(\f_0) \\
& \leq & 2^{n+p}\int_X   \left(\frac{\f_0+\f_1}{2} -\f_0\right)^p  MA\left( \frac{\f_0+\f_1}{2} \right)  \\
& \leq & 2^{n+p} d_p\left(\f_0,\frac{\f_0+\f_1}{2} \right)^p,
\end{eqnarray*}
as follows from the first step of the proof since  $\f_0 \leq   \f_1 $.
Lemma \ref{lem:dar2}.4  yields
$$
d_p\left(\f_0,\frac{\f_0+\f_1}{2} \right) \leq d_p(\f_0,\f_1)
$$
hence 
$
\int_X (\f_1-\f_0)^p MA(\f_0) \leq    2^{n+p} d_p(\f_0,\f_1)^p.
$

\smallskip

We finally treat the first upper bound of the Proposition which does not require $\f_0$ to lie below $\f_1$. It follows from the triangle inequality that
{\small
\begin{eqnarray*}
\lefteqn{d_p(\f_0,\f_1) \leq  d_p(\f_0,\max(\f_0,\f_1))+d_p(\max(\f_0,\f_1), \f_1) }\\
&\leq & \left( \int_{\{\f_0<\f_1\}} (\f_1-\f_0)^p MA(\f_0) \right)^{1/p}+
\left( \int_{\{\f_0>\f_1\}} (\f_0-\f_1)^p MA(\f_1) \right)^{1/p} \\
&\leq& {2}^{1-1/p} \, \left( \int_X |\f_1-\f_0|^p \left[ MA(\f_0)+MA(\f_1)\right] \right)^{1/p} \\
&=& 2 \left( \int_X |\f_1-\f_0|^p \frac{ \left[ MA(\f_0)+MA(\f_1)\right]}{2} \right)^{1/p} 
\end{eqnarray*}
}
by using the elementary inequality ${a}^{1/p}+{b}^{1/p} \leq {2}^{1-1/p} {(a+b)}^{1/p}$. 
\end{proof}

\begin{rem}
Working with $\p=t \f_0+(1-t) \f_1$, $0<t<1$, instead of $\frac{\f_0+\f_1}{2}$, one can improve
the above inequality and obtain
$$
\left( \int_X (\f_1-\f_0)^p MA(\f_0) \right)^{1/p} \leq   \frac{(n+p)^{1+n/p}}{p \, n^{n/p}} d_p(\f_0,\f_1).
$$
\end{rem}

We now extend Lemma \ref{lem:dar2}.1, following \cite[Lemma 5.3]{Dar15}:

 \begin{lem} \label{lem:bary}
 For all $\f_0,\f_1 \in  \H_{bd}$,
 $$
 d_p\left(\f_0,\frac{\f_0+\f_1}{2} \right) \leq 2^{2+n/p} d_p(\f_0,\f_1).
 $$
 \end{lem}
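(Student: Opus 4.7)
The plan is to reduce the problem to two ordered situations using the rooftop envelope $\psi := \f_0 \vee \f_1$. Since $\psi \leq \f_0$ and $\psi \leq \f_1$, we have $\psi \leq (\f_0+\f_1)/2$, so both $\f_0$ and the midpoint lie above the common floor $\psi$. The triangle inequality then yields
$$
d_p\!\left(\f_0, \tfrac{\f_0+\f_1}{2}\right) \;\leq\; d_p(\f_0, \psi) + d_p\!\left(\psi, \tfrac{\f_0+\f_1}{2}\right),
$$
and I will bound each term by a multiple of $d_p(\f_0, \f_1)$. Note that $\psi \in \H_{bd}$ by the observation following Theorem \ref{thm:magicFormulaBdd} (applied, if needed, via Proposition \ref{pro:kis} to confirm that $\psi$ has bounded Laplacian on $\Amp(\alpha)$), so both Corollary \ref{cor:min} and Proposition \ref{prop:upperbound} are available.

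For the first summand, Corollary \ref{cor:min} gives directly $d_p(\f_0, \psi) \leq d_p(\f_0, \f_1)$. For the second summand, the ordered bound of Proposition \ref{prop:upperbound} (applied to the pair $\psi \leq (\f_0+\f_1)/2$) gives
$$
d_p\!\left(\psi, \tfrac{\f_0+\f_1}{2}\right)^p \;\leq\; \int_X \left(\tfrac{\f_0+\f_1}{2} - \psi\right)^p MA(\psi).
$$
Writing $\frac{\f_0+\f_1}{2} - \psi = \frac{(\f_0-\psi)+(\f_1-\psi)}{2}$ and using convexity of $t \mapsto t^p$ for $p \geq 1$, this integrand is at most $\tfrac12 (\f_0-\psi)^p + \tfrac12(\f_1-\psi)^p$.

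Each of the two resulting integrals can be controlled by the other half of Proposition \ref{prop:upperbound} (the upper estimate in the ordered case $\psi \leq \f_i$):
$$
\int_X (\f_i - \psi)^p MA(\psi) \;\leq\; 2^{n+p}\, d_p(\psi, \f_i)^p,\qquad i = 0,1,
$$
and by Corollary \ref{cor:min} (and its counterpart with the roles of $\f_0, \f_1$ exchanged), $d_p(\psi, \f_i) \leq d_p(\f_0, \f_1)$. Combining these estimates gives $d_p(\psi, (\f_0+\f_1)/2) \leq 2^{1 + n/p}\, d_p(\f_0, \f_1)$, and hence
$$
d_p\!\left(\f_0, \tfrac{\f_0+\f_1}{2}\right) \;\leq\; \left(1 + 2^{1+n/p}\right) d_p(\f_0, \f_1) \;\leq\; 2^{2+n/p}\, d_p(\f_0, \f_1).
$$

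I do not foresee any serious difficulty: the ingredients — the Pythagorean-type inequality giving Corollary \ref{cor:min}, and both directions of the ordered estimate Proposition \ref{prop:upperbound} — have already been proved without circular reference to Theorem \ref{thm:equiv}. The only conceptual point is recognising that the rooftop $\psi$ serves as a common lower floor below which all three potentials $\f_0$, $\f_1$, $(\f_0+\f_1)/2$ can be compared, which is the standard device in Darvas' treatment.
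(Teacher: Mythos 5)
Your argument is correct, and it takes a route that differs in its decomposition from the paper's. The paper first disposes of the ordered case via Lemma \ref{lem:dar2}.1, normalizes $\f_0=0$, splits at the intermediate rooftop $0\vee\frac{\f_1}{2}$, passes to $0\vee\f_1$ by the monotonicity Lemma \ref{lem:dar2}.4, and then crucially uses that $MA(0\vee\f_1)$ is supported on the contact set (Proposition \ref{pro:kis}) to run a pointwise comparison there, finishing with Proposition \ref{prop:upperbound} and the full Pythagorean identity of Proposition \ref{prop:pyth}; this yields $2^{2+n/p}$. You instead split at $\psi=\f_0\vee\f_1$ via the triangle inequality, handle $d_p(\f_0,\psi)$ by Corollary \ref{cor:min}, and bound $d_p\bigl(\psi,\frac{\f_0+\f_1}{2}\bigr)$ by combining both directions of the ordered estimate in Proposition \ref{prop:upperbound} with the convexity of $t\mapsto t^p$ and Corollary \ref{cor:min} again, landing on $(1+2^{1+n/p})\leq 2^{2+n/p}$. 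What your route buys is the elimination of the normalization $\f_0=0$, of the case distinction, and of the contact-set support argument for the Monge--Amp\`ere measure of the rooftop; it only uses the Pythagorean formula through its corollary. Two small caveats on citations rather than substance: the observation following Theorem \ref{thm:magicFormulaBdd} asserts $\f_0\vee\f_1\in\H_{bd}$ only for $\f_0,\f_1\in\H_\omega$, so for $\f_0,\f_1\in\H_{bd}$ you should note that $\f_0\vee\f_1=P_\omega(\min(f_0,f_1))$ with $dd^c\min(f_0,f_1)\leq C\omega_X$, which gives the membership by the same reasoning; and the fact that $\frac{\f_0+\f_1}{2}\in\H_{bd}$ (needed to apply Proposition \ref{prop:upperbound} to the pair $\psi\leq\frac{\f_0+\f_1}{2}$) is exactly the assertion made in the proof of Proposition \ref{prop:upperbound}, so your use of it is consistent with the paper's own level of rigor.
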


 \begin{proof}
 When $\f_0 \leq \f_1$, this follows from Lemma \ref{lem:dar2}.1.
 Replacing $\omega$ by $\omega+dd^c \f_0$, we can assume without loss of generality that $\f_0=0$.
The triangle inequality yields
$$
 d_p\left(0,\frac{\f_1}{2} \right) \leq  d_p\left(0,0 \vee \frac{\f_1}{2} \right)+ d_p\left(0 \vee \frac{\f_1}{2},\frac{\f_1}{2} \right).
$$

 Observe that $0 \vee \f_1 \leq 0 \vee \frac{\f_1}{2} \leq \min(0,\frac{\f_1}{2}) $. It follows therefore from Lemma \ref{lem:dar2}.4 that
 $$
 d_p\left(0,0 \vee \frac{\f_1}{2} \right)+ d_p\left(0 \vee \frac{\f_1}{2},\frac{\f_1}{2} \right)
 \leq  d_p\left(0,0 \vee \f_1 \right)+ d_p\left(0 \vee \f_1,\frac{\f_1}{2} \right).
 $$
 
 Since $0 \vee \f_1 \leq 0$ and $0 \vee \f_1 \leq \frac{\f_1}{2}$, we can invoke Proposition \ref{prop:upperbound} to obtain

 \begin{eqnarray*}
 \lefteqn{d_p\left(0,0 \vee \f_1 \right)+ d_p\left(0 \vee \f_1,\frac{\f_1}{2} \right)} \\
 &&\leq  \left( \int_X | 0 \vee \f_1 |^p MA( 0 \vee \f_1) \right)^{1/p}
 + \left( \int_X | 0 \vee \f_1-  \frac{\f_1}{2} |^p MA( 0 \vee \f_1) \right)^{1/p} \\
 && \leq 2^{1-1/p}  \left( \int_X \left[ | 0 \vee \f_1 |^p +| 0 \vee \f_1-  \frac{\f_1}{2} |^p \right] MA( 0 \vee \f_1) \right)^{1/p}.
 \end{eqnarray*}

 Recall now that the measure $MA( 0 \vee \f_1)$ is supported on the contact set 
 $S:=\{x \in X \; ; \;  0 \vee \f_1(x)=\min(0,\f_1)(x) \}$. On this set we have
 $$
 | 0 \vee \f_1 |^p +| 0 \vee \f_1-  \frac{\f_1}{2} |^p
 \leq 2 |\f_1|^p=2   \left[ | 0 \vee \f_1 |^p +| 0 \vee \f_1- \f_1 |^p \right],
 $$
 while Proposition \ref{prop:upperbound}  yields
 \begin{eqnarray*}
\! \! \! \!  \! \! \! \!   \! \! \! \!   \! \! \! \!  \lefteqn{  \int_X \left[ | 0 \vee \f_1 |^p +| 0 \vee \f_1-  \f_1 |^p \right] MA( 0 \vee \f_1) } \\
 &&\leq  2^{p+n} \left[ d_p(0, 0 \vee \f_1)^p+d_p(0 \vee \f_1,\f_1)^p \right]  =2^{p+n} d_p(0,\f_1)^p,
 \end{eqnarray*}
where the last equality follows from Proposition \ref{prop:pyth}. Altogether this yields
$
 d_p\left(0,\frac{\f_1}{2} \right) \leq 2^{2+n/p} d_p(0,\f_1),
$
 as claimed.
 \end{proof}

We are now ready to prove Theorem \ref{thm:equiv}.
\begin{proof}
We have already observed  that $d_p(\f_0,\f_1) \leq 2 I_p(\f_0,\f_1)$ in Proposition \ref{prop:upperbound},
so we focus on the reverse control. Lemma \ref{lem:bary} and Proposition \ref{prop:pyth} yield 
\begin{eqnarray*}
2^{2p+n} d_p^p (\f_0, \f_1) &\geq & d_p^p \left(\f_0, \frac{\f_0+\f_1}{2} \right)\\
&=& d_p^p\left(\f_0,  \f_0\vee \frac{\f_0+\f_1}{2} \right) +  d_p^p\left(\frac{\f_0+\f_1}{2},  \f_0 \vee \frac{\f_0+\f_1}{2} \right)
\end{eqnarray*}
It follows from \eqref{bound: f_1} together with the fact that $2^n \MA\left(  \frac{\f_0+\f_1}{2}\right) \geq \MA(\f_0)$ that
$$ d_p^p\left(\f_0,  \f_0\vee \frac{\f_0+\f_1}{2} \right)  \geq   \int_X \left( \f_0- \frac{\f_0+\f_1}{2}\vee \f_0 \right)^p \MA(\f_0)$$
and

$$ d_p^p\left(\frac{\f_0+\f_1}{2},  \f_0\vee \frac{\f_0+\f_1}{2} \right) \geq 2^{-n} \int_X  \left(\frac{\f_0+\f_1}{2} - \f_0\vee \frac{\f_0+\f_1}{2}  \right)^p \MA(\f_0).$$
Hence

\begin{eqnarray*}
d_p^p (\f_0, \f_1) & \geq & 2^{-2(p+n)} \int_X \left[  \left( \f_0- \frac{\f_0+\f_1}{2}\vee \f_0 \right)^p+ \left( \frac{\f_0+\f_1}{2} -\frac{\f_0+\f_1}{2}\vee \f_0\right)^p \right] \MA(\f_0)\\
&\geq & 2^{1-3p-2n} \int_X  \left| \f_0- \frac{\f_0+\f_1}{2}\right|^p \MA(\f_0)\\
& =& 2^{1-4p-2n}\int_X |\f_0- \f_1|^p \MA(\f_0)
\end{eqnarray*}

where in the last inequality we used the fact that $|a-b|^p\leq 2^{p-1}( a^p+b^p)$, for any $a,b\in \R^+$.

Reversing the role of $\f_0$ adn $\f_1$ we get

$$d_p^p (\f_0, \f_1) \geq 2^{1-4p-2n} \int_X | \f_1- \f_0|^p \MA(\f_1)$$
from which it follows $d_p^p (\f_0, \f_1) \geq  2^{1-4p-2n}  I_p^p(\f_0, \f_1).$
\end{proof}

\subsection{Controlling the sup}

It follows from previous results that the supremum of a bounded potential with locally bounded laplacian in $\Amp(\alpha)$ is controlled by the distance
to the base point:

\begin{lem} \label{lem:controlesup}
There exists $C>0$ such that for all $\f \in  \H_{bd}$,
$$
-2^{4+2n}d_1(0,\f) \leq \sup_X \f \leq 2^{4+2n} (n+1) d_1(0,\f)+C
$$
\end{lem}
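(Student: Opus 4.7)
The plan is to deduce both inequalities from Theorem~\ref{thm:equiv} applied with $p=1$, together with a standard Hartogs-type estimate. Theorem~\ref{thm:equiv} yields
$$
I_1(0,\f) \leq 2^{3+2n}\, d_1(0,\f), \qquad I_1(0,\f)=\frac{1}{2}\int_X |\f|\,\bigl[MA(0)+MA(\f)\bigr],
$$
so it will suffice to control $\sup_X \f$ from above and below by $I_1(0,\f)$ up to a universal additive constant $C=C(\omega,\omega_X)$. Both endpoints $0$ and $\f$ lie in $\H_{bd}$, so the theorem is applicable.

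For the \emph{lower} bound, if $\sup_X \f \geq 0$ it is trivial. Otherwise set $M := \sup_X \f < 0$, so $|\f| = -\f \geq -M$ pointwise. Since $MA(0)$ and $MA(\f)$ are probability measures,
$$
I_1(0,\f) \geq \tfrac{1}{2}\cdot 2\cdot(-M) = -M,
$$
and Theorem~\ref{thm:equiv} then gives $M \geq -I_1(0,\f) \geq -2^{3+2n}d_1(0,\f) \geq -2^{4+2n}d_1(0,\f)$, as required.

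For the \emph{upper} bound, the key ingredient is the Hartogs-type estimate: there exists $C=C(\omega,\omega_X)$ such that
$$
\sup_X u \leq \int_X u\, MA(0)+C\qquad\text{for all } u\in\psh(X,\omega+\omega_X).
$$
Indeed, $\omega+\omega_X$ is K\"ahler and $MA(0)=\omega^n/V_\a$ is a smooth probability measure with bounded density (because $\omega$ is smooth and semi-positive), so the $L^1$-compactness of normalized $(\omega+\omega_X)$-psh functions together with dominated convergence against $MA(0)$ rule out any sequence with $\sup = 0$ and $\int \cdot\, MA(0)\to -\infty$. Applied to $\f \in \H_{bd} \subset \psh(X,\omega)\subset \psh(X,\omega+\omega_X)$, and combined with
$$
\int_X \f\, MA(0) \leq \int_X |\f|\, MA(0) \leq 2\,I_1(0,\f) \leq 2^{4+2n}\, d_1(0,\f),
$$
this yields $\sup_X \f \leq 2^{4+2n}\,d_1(0,\f) + C$, which a fortiori gives the stated bound $\sup_X \f \leq 2^{4+2n}(n+1)\,d_1(0,\f) + C$ (the factor $n+1$ is slack in this argument).

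The only genuinely delicate step is the Hartogs estimate in the semi-positive/big setting, but it reduces verbatim to the K\"ahler case through the inclusion $\psh(X,\omega) \subset \psh(X,\omega+\omega_X)$ and the bounded $L^\infty$-density of $MA(0)$; the rest is a direct bookkeeping of the constants coming out of Theorem~\ref{thm:equiv}.
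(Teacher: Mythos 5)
Your proof is correct, and it takes a genuinely different (and somewhat leaner) route than the paper. The paper's argument runs through the energy functional: it splits into three cases ($\sup_X\f\le 0$, $\f\ge 0$, and mixed sign), uses Proposition \ref{lem:formuled1} to identify $d_1(0,\f)$ with $\pm E(\f)$ in the monotone cases, invokes the Hartogs-type bound $\sup_X\f\le\int_X\f\,MA(0)+C$ for the case $\f\ge0$, and reduces the mixed case to it via $\max(0,\f)$ together with Propositions \ref{prop:python}, \ref{prop:upperbound} and Theorem \ref{thm:equiv}. You instead bypass $E$ and Proposition \ref{lem:formuled1} entirely: both bounds come straight from the definition of $I_1$ (using that $MA(0)$ and $MA(\f)$ are probability measures, and that $\int_X|\f|\,MA(0)\le 2I_1(0,\f)$), the equivalence $I_1\le 2^{3+2n}d_1$ of Theorem \ref{thm:equiv} applied with the admissible endpoints $0,\f\in\H_{bd}$, and the same Hartogs estimate — which you correctly justify in the semi-positive setting via the inclusion $\psh(X,\omega)\subset\psh(X,\omega+\omega_X)$ and the bounded density of $MA(0)$ (the paper cites \cite{GZ05} for this step; your phrase ``dominated convergence'' is really the bounded-density/$L^1$-continuity argument, but the mechanism is right). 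What each approach buys: yours avoids the case analysis and even yields the slightly sharper upper bound $\sup_X\f\le 2^{4+2n}d_1(0,\f)+C$ without the factor $(n+1)$; the paper's route gives the sharper lower bound $-d_1(0,\f)\le\sup_X\f$ when $\sup_X\f\le 0$ and exercises Proposition \ref{lem:formuled1}, which it needs elsewhere anyway. Either argument establishes the stated inequalities.
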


\begin{proof}
If $\sup_X \f \leq 0$, then $\sup_X \f \leq 0 \leq (n+1) d_1(0,\f)+C$, while
$$
-d_1(0,\f)=E(\f) \leq \sup_X \f,
$$
as follows from Proposition \ref{lem:formuled1}.
We therefore assume in the sequel that $\sup_X \f \geq 0$.
If $\f \geq 0$, then Proposition \ref{lem:formuled1} yields
$$
\frac{1}{n+1} \int_X \f MA(0) \leq E(\f) =d_1(0,\f).
$$
 It is a classical consequence of the $\omega$-plurisubharmonicity \cite[Proposition 2.7]{GZ05} that there exists $C>0$ such that
 such that for all $\f \in PSH(X,\omega)$,
$$
\sup_X \f \leq \int_X \f \, MA(0)+C.
$$
Thus $\sup_X \f \leq (n+1) d_1(0,\f)+C$.

  When $\sup_X \f \geq 0$ but $\f$ takes both positive and negative values,
  we set $\p=\max(0,\f)$ and observe that $\sup_X \p=\sup_X \f$.
Using  Propositions \ref{prop:python}, \ref{prop:upperbound} and Theorem \ref{thm:equiv} we obtain

 $$
 d_1(0,\max(0,\f)) \leq 2 I_1(0,\max(0,\f) ) \leq 2 I_1(0,\f )\leq 2^{5-(2n-1)/p}d_1(0,\f).
 $$ 
  The conclusion follows therefore from the previous case.
\end{proof}

\begin{prop} \label{lem:formuled1}
Assume $\f,\p \in  \H_{bd}$. Then
$$
d_1(\f,\p)=E(\f)+E(\p)-2 E(\f \vee \p).
$$
\end{prop}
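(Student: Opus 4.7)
The plan is to combine the Pythagorean formula of Proposition \ref{prop:pyth} with the fact that the Aubin--Mabuchi functional $E$ is affine along Mabuchi geodesics. Since Proposition \ref{prop:pyth} gives
$$
d_1(\f,\p) = d_1(\f, \f \vee \p) + d_1(\f \vee \p, \p),
$$
and since $\f \vee \p \leq \f$ and $\f \vee \p \leq \p$, it suffices to prove the one-sided identity
$$
d_1(u, v) = E(v) - E(u) \quad \text{whenever } u \leq v \text{ in } \H_{bd}.
$$
Granting this, applying it to the two pairs $(\f \vee \p, \f)$ and $(\f \vee \p, \p)$ yields the claim.

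To establish the one-sided identity, I would let $(u_t)_{0 \leq t \leq 1}$ be the Mabuchi geodesic joining $u_0 = u$ to $u_1 = v$. Since $u_0 \leq u_1$, Lemma \ref{lem:maxprince}.(ii) guarantees that $\dot{u}_t \geq 0$ for a.e. $(x,t)$; in particular the right derivative $\dot u_0$ is nonnegative. By Lemma \ref{lem:AubinMabuchi1} the map $t \mapsto E(u_t)$ is affine, and the computation of $\frac{d}{dt}E(u_t)$ in that lemma shows that its (constant) slope equals $\int_X \dot u_0 \, MA(u_0)$. Therefore
$$
E(v) - E(u) = \int_X \dot{u}_0 \, MA(u_0) = \int_X |\dot u_0| \, MA(u_0),
$$
using the sign of $\dot u_0$. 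By Theorem \ref{thm:magicFormulaBdd} the last integral is precisely $d_1(u, v)$, and the one-sided identity follows.

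The main obstacle is a technical hygiene point: Theorem \ref{thm:magicFormulaBdd} and the affinity of $E$ require that both endpoints of the geodesic belong to $\H_{bd}$, so I would need to verify that $\f \vee \p \in \H_{bd}$ when $\f, \p \in \H_{bd}$. This is exactly what Proposition \ref{pro:kis} provides (together with the envelope representation $\f \vee \p = P_\omega(\min(\f, \p))$ and the continuity of $\min(\f,\p)$), ensuring that $\f \vee \p$ has locally bounded laplacian on $\Amp(\alpha)$ and is a continuous $\omega$-psh envelope of the required type. Once this membership is secured, the two ingredients---Pythagoras plus the affinity of $E$ along increasing geodesics---combine into a short, formal computation giving the stated formula.
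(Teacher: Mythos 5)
Your route (Pythagoras plus affinity of $E$) is genuinely different from the paper's, which instead invokes Darvas's K\"ahler identity $d_{1,\e}(\f,\p)=E_{\omega_\e}(\f)+E_{\omega_\e}(\p)-2E_{\omega_\e}(\f\vee_\e\p)$ for $\omega_\e=\omega+\e\omega_X$ and then passes to the limit $\e\to 0$, the delicate point being the convergence $E_{\omega_\e}(\f\vee_\e\p)\to E(\f\vee\p)$. Your reduction via Proposition \ref{prop:pyth} is fine, but the key step of your one-sided identity has a genuine gap: you claim that ``the computation of $\frac{d}{dt}E(u_t)$ in Lemma \ref{lem:AubinMabuchi1} shows that the constant slope equals $\int_X\dot u_0\,MA(u_0)$''. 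That computation is explicitly declared heuristic in the paper and is only valid for smooth paths; for a bounded geodesic the rigorous content of the lemma is the affinity of $t\mapsto E(u_t)$ (obtained by $\e$-approximation), with no identification of the slope. The statement $E(v)-E(u)=\int_X\dot u_0\,MA(u_0)$ for $u\leq v$ in $\H_{bd}$ is of exactly the same nature as Theorem \ref{thm:magicFormulaBdd}, which the paper proves by a careful approximation precisely because such endpoint-derivative formulas do not come for free in this low-regularity setting; in fact, combined with Theorem \ref{thm:magicFormulaBdd}, it is equivalent to the comparable case of the very proposition you are proving, so citing the heuristic computation is circular in spirit. The gap is fixable within the paper's toolbox: either run the paper's argument (K\"ahler case plus limits), or use the cocycle formula and Lemma \ref{lem:controleL2} to sandwich the constant slope,
$$
\int_X \frac{u_t-u_0}{t}\,MA(u_t)\;\leq\;\frac{E(u_t)-E(u_0)}{t}\;\leq\;\int_X \frac{u_t-u_0}{t}\,MA(u_0),
$$
then let $t\to 0^+$ in the right-hand bound (monotone convergence, since the difference quotients decrease to $\dot u_0$ against the fixed measure $MA(u_0)$) and perform the symmetric estimate at the endpoint $t=1$, using Theorem \ref{thm:magicFormulaBdd} and Lemma \ref{lem:maxprince} at both ends to conclude that the slope equals $d_1(u,v)$.

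A secondary imprecision: you need $\f\vee\p\in\H_{bd}$ to invoke Theorem \ref{thm:magicFormulaBdd} for the geodesics emanating from $\f\vee\p$, but Proposition \ref{pro:kis} does not give membership in $\H_{bd}$ (it only gives boundedness, locally bounded Laplacian on $\Amp(\a)$, and the formula for $MA(\f\vee\p)$), and $\min(\f,\p)$ need not be continuous on all of $X$ nor satisfy $dd^c\min(\f,\p)\leq C\omega_X$, since elements of $\H_{bd}$ only carry a one-sided bound on $dd^c$. The correct (and easy) argument is to use the defining data: writing $\f=P_\omega(f)$, $\p=P_\omega(g)$ with $f,g\in C^0(X)$ and $dd^c f, dd^c g\leq C\omega_X$, one checks $\f\vee\p=P_\omega\bigl(\min(f,g)\bigr)$, and $\min(f,g)=-\max(-f,-g)$ is continuous with $dd^c\min(f,g)\leq C\omega_X$, so $\f\vee\p\in\H_{bd}$. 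With these two repairs your argument goes through and gives an alternative, more intrinsic proof than the paper's approximation argument.
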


\begin{proof}
We proceed by approximation, so as to reduce to the K\"ahler case. By \cite[Corollary 4.14]{Dar15} we know that $$d_{1,\e}(\f,\p)=E_{\omega_\e}(\f)+E_{\omega_e}(\p)-2 E_{\omega_\e}(\f \vee_{\e} \p)$$
 where $\omega_\e:= \omega+ \e \omega_X$, $\f \vee_{\e} \p$ is the greatest $\omega_\e$-psh function that lies below $\min(\f, \p)$ and $E_{\omega_\e}$ is as in \eqref{energy_approx}.   Since $(\omega_\e+dd^c \f)^n$ converges weakly to $(\omega+dd^c \f)^n$ we have that $E_{\omega_\e} (\f)$ converges to $E (\f)$ as $\e$ goes to $0$. The same holds for $E_{\omega_\e}(\psi)$. We then need to insure that $E_{\omega_\e}(\f \vee_{\omega_\e} \psi)$ converges to $E(\f \vee \psi)$. Denote $\phi_\e:= \f \vee_{\omega_\e} \psi$ and $\phi:=\f \vee \psi $. Fix $\e' > \e$. Using Lemma \ref{lem:AubinMabuchi1} and the fact that $\phi_\e$ is decreasing to $\phi$ we get
\begin{eqnarray*}
0\geq E_{\omega_\e}(\phi_\e) - E_{\omega_\e}(\phi)&=& \frac{1}{(n+1)V_\e} \sum_{j=0}^{n} \int_X (\phi_\e-\phi) (\omega_\e+dd^c \phi_\e)^{j} \wedge (\omega_\e+dd^c \phi)^{n-j}\\
&\geq & \frac{1}{(n+1)V_\e} \sum_{j=0}^{n}\int_X (\phi_{\e'}-\phi) (\omega+\omega_X+dd^c \phi_\e)^{j} \wedge (\omega+\omega_X+dd^c \phi)^{n-j}.
\end{eqnarray*}
Letting first $\e$ to zero and then $\e'$ we get the result. The conclusion then follows from the arguments above and Proposition \ref{quasi decreasing} .
\end{proof}

\section{The complete geodesic space $({\mathcal E}^p(X,\omega),d_p)$} \label{sec:geod}


\subsection{Metric completion}

For $\f,\p \in \E^p(X,\omega)$ we let $\f_j,\p_k$ denote sequences of elements in $\H_{bd}$
decreasing to $\f,\p$ respectively, and set
$$
D_p(\f,\p):=\liminf_{j,k \rightarrow +\infty} d_p(\f_j,\p_k).
$$


We list in the proposition below various properties of this extension.

\begin{prop}\label{properties $D_p$}
\text{ }

i) $D_p$ is a distance on $\E^p(X,\omega)$
which coincides with $d_p$ on $\H_{bd}$;

ii) the definition of $D_p$ is independent of the choice of the approximants;

iii) $D_p$ is continuous along decreasing sequences in $\E^p(X,\omega)$.

\smallskip Moreover all previous inequalities comparing  $d_p$ and $I_p$ on $\H_{bd}$
extend to inequalities between $D_p$ and $I_p$ on $\E^p(X,\omega)$.
\end{prop}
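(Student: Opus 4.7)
The strategy is to transfer properties of $d_p$ from $\H_{bd}$ (where Theorem~\ref{thm:equiv} gives the equivalence $2^{-1}d_p \le I_p \le 2^{4+(2n-1)/p}d_p$) to $\E^p(X,\omega)$ via monotone approximation, using $I_p$ as a bridge. The essential ingredient will be a monotone continuity property of $I_p$ along decreasing sequences in $\H_{bd}$ with limit in $\E^p(X,\omega)$, which is the $\e=0$ analogue of Proposition~\ref{prop:approx2}.

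The pivotal preliminary step is to show: if $\f_j, \f_j' \in \H_{bd}$ both decrease to a common $\f \in \E^p(X,\omega)$, then $d_p(\f_j, \f_j') \to 0$. Applying Proposition~\ref{prop:python} with the intermediate envelope $h_j := \max(\f_j,\f_j')$ (bounded $\omega$-psh, and decreasing to $\f$), and then Lemma~\ref{lem:controleL2}, gives
\[
 I_p(\f_j,\f_j')^p = I_p(\f_j,h_j)^p + I_p(h_j,\f_j')^p \le \int_X (h_j-\f_j)^p\, MA(\f_j) + \int_X (h_j-\f_j')^p\, MA(\f_j').
\]
Each term tends to zero by the standard BEGZ monotone convergence of non-pluripolar Monge--Amp\`ere integrals for decreasing $\omega$-psh sequences in $\E^p$, since $0 \le h_j-\f_j \le h_1-\f$ and $h_1-\f \in L^p(MA(\f_j))$ uniformly. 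Combining with Theorem~\ref{thm:equiv} on $\H_{bd}$, we obtain $d_p(\f_j,\f_j') \le 2\, I_p(\f_j,\f_j') \to 0$.

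With this in hand, the triangle inequality on $\H_{bd}$ yields
\[
  |d_p(\f_j,\p_k) - d_p(\f_{j'},\p_{k'})| \le d_p(\f_j,\f_{j'}) + d_p(\p_k,\p_{k'}),
\]
which simultaneously gives a Cauchy property (so $\liminf$ is a genuine $\lim$) and independence from the choice of approximants, proving (ii). Coincidence with $d_p$ on $\H_{bd}$ follows by taking constant approximants. Passing the inequalities of Theorem~\ref{thm:equiv} to the limit, using $I_p(\f_j,\p_k)\to I_p(\f,\p)$ (same monotone argument), yields
\[
 2^{-1} I_p(\f,\p) \le D_p(\f,\p) \le 2^{4+(2n-1)/p}\, I_p(\f,\p)
\]
on $\E^p(X,\omega)$. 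Non-degeneracy of $D_p$ then follows from the domination principle (Proposition~\ref{pro:dp}) via $I_p$, symmetry is inherited from $d_p$, and the triangle inequality is obtained from its $\H_{bd}$ counterpart by a diagonal limit along triples of simultaneous approximants $\f_j, \p_k, \eta_l$. This proves (i). For (iii), if $\psi_n \searrow \psi$ in $\E^p$, the just-established equivalence reduces the claim to $I_p(\psi_n,\psi) \to 0$, which again follows from monotone continuity of the non-pluripolar Monge--Amp\`ere. Finally, every other inequality proved in Section~\ref{sec:comparison} (Proposition~\ref{prop:python}, Proposition~\ref{prop:upperbound}, Lemma~\ref{lem:bary}) passes to $\E^p$ by the same monotone approximation, since both sides are continuous along decreasing $\H_{bd}$-sequences.

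The main obstacle is the pivotal step: establishing the monotone continuity of $I_p$ directly for $\H_{bd}$-sequences, without the regularization by $\omega_\e$ used in Proposition~\ref{prop:approx2}. This requires controlling mixed Monge--Amp\`ere integrands against decreasing, possibly singular, potentials in $\E^p$; once in place, the rest is a mechanical translation of the $\H_{bd}$-inequalities to $\E^p(X,\omega)$ using the triangle inequality and the equivalence with $I_p$.
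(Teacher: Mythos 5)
Your overall strategy is the one the paper follows: define $D_p$ by monotone approximation, use $d_p\leq 2I_p$ (Proposition \ref{prop:upperbound}) to reduce everything to showing that two $\H_{bd}$-sequences decreasing to the same finite energy limit become $I_p$-close, and then transport the inequalities of Theorem \ref{thm:equiv} (and \cite[Theorem 2.17]{BEGZ10} for (iii)) to $\E^p(X,\omega)$. The only organizational difference is local: the paper assumes WLOG that the two approximating sequences are intertwining and bounds $I_p(\f_j,u_\ell)^p\leq \int_X(u_\ell-\f_j)^p MA(\f_j)$ directly, whereas you handle non-comparable sequences via $h_j=\max(\f_j,\f_j')$, Proposition \ref{prop:python} and Lemma \ref{lem:controleL2}; both reductions are fine.

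The one place where your argument is not valid as written is the pivotal convergence
\[
\int_X (h_j-\f_j)^p\, MA(\f_j)\longrightarrow 0 .
\]
You justify it by domination, ``$0\leq h_j-\f_j\leq h_1-\f$ and $h_1-\f\in L^p(MA(\f_j))$ uniformly''; but dominated convergence does not apply when the measures $MA(\f_j)$ themselves vary with $j$ (pointwise convergence of the integrands plus a uniform $L^p$ bound against each measure gives nothing by itself), and this is exactly the difficulty you flag at the end as ``the main obstacle''. The paper closes this gap with \cite[Lemma 3.5]{GZ07}: since $\f\leq\f_j\leq h_k$ for $j\geq k$, one has
\[
\int_X (h_j-\f_j)^p\, MA(\f_j)\;\leq\;\int_X (h_k-\f_j)^p\, MA(\f_j)\;\leq\;(p+1)^n\int_X (h_k-\f)^p\, MA(\f),
\]
which replaces the moving measures by the fixed measure $MA(\f)$; letting $j\to\infty$ and then $k\to\infty$, monotone convergence for the fixed measure (using $\f\in\E^p$ and $h_1$ bounded) finishes the step. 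The same device (or \cite[Theorem 2.17]{BEGZ10}) is also what legitimizes your claims that $I_p(\f_j,\p_k)\to I_p(\f,\p)$ and that the remaining inequalities pass to the limit. With that citation inserted, your proof is complete and essentially identical in substance to the paper's.
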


In the sequel we will therefore denote $D_p$ by $d_p$.

\begin{proof}
It is a tedious exercise to verify that $D_p$ defines a "semi-distance", i.e. satisfies all properties of a distance
but for the separation property. It follows from the definition of $D_p$ and Proposition \ref{prop:approx2} that Theorem \ref{thm:equiv} extends in a natural way to potentials in $\E^p(X, \omega)$.  
If $D_p(\f,\p)=0$, it follows therefore that
$I_p(\f,\p)=0$ hence $\f=\p$ by the domination principle.

One can check that $D_p$ coincides with $d_p$ on $\H_{bd}$ as follows:
using ii) one can use the constant sequences $\f_j \equiv \f$ and $\p_k \equiv \p$
to obtain this equality.

We now prove ii). Let $\f_j, u_j$ (resp. $\p_k, v_k$) denote two sequences of
elements of $\H_{bd}$ decreasing to $\f$ (resp. $\p$). We can assume without loss of generality
that these sequences are intertwining,
i.e. for all $j,k \in \N$, there exists $\ell,q \in \N$ such that
$
\f_j \leq u_{\ell}
\; \text{ and }  \; 
\p_k \leq v_{q},
$
with similar reverse inequalities. It follows from Proposition \ref{prop:upperbound}
and the triangle inequality that
\begin{eqnarray*}
\left| d_p(\f_j,\p_k)-d_p(u_\ell,v_q) \right| &\leq& 
d_p(\f_j,u_\ell)+d_p(\p_k,v_q) \\
&\leq&  2 I_p(\f_j,u_\ell)+2 I_p(\p_k,v_q).
\end{eqnarray*}
Now, again by Proposition \ref{prop:upperbound} we get
$$
I_p(\f_j,u_\ell)^p \leq    \int_X (u_\ell-\f_j)^p MA(\f_j)  \leq   (p+1)^n \int_X (u_\ell-\f)^p MA(\f)
$$
where the last inequality follows from \cite[Lemma 3.5]{GZ07}. 
The monotone convergence theorem therefore yields  $I_p(\f_j,u_\ell)+I_p(\p_k,v_q) \rightarrow 0$ 
as $\ell, q \rightarrow +\infty$,
proving ii).

One shows iii) with similar arguments. The extension of the inequalities comparing $d_p$ and 
$I_p$  follows from \cite[Theorem 2.17]{BEGZ10}.
\end{proof}

 \begin{prop}\label{thm_Completeness}
The metric spaces $(\E_{norm}^p(X,\omega),d_p)$ and  $(\E^p(X,\omega),d_p)$ are complete. 
The Mabuchi topology $d_p$ dominates the
topology induced by $I$: if  a sequence converges for $d_p$, then it converges in energy.
\end{prop}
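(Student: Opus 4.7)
The plan is to reduce the completeness claim to the already-established completeness of $(\E^1(\a), d_I)$ from \cite{BBEGZ} by exploiting the equivalence $2^{-1} d_p \le I_p \le C_n d_p$ (Theorem \ref{thm:equiv}, extended to $\E^p$ by Proposition \ref{properties $D_p$}) and the Hölder-type comparison $I \le 2^{1/p} I_p$ recorded in Section \ref{subsection: E^p}.

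The first step is to produce a candidate limit. Let $(\f_j) \subset \E_{norm}^p(X,\omega)$ be $d_p$-Cauchy. Then $(\f_j)$ is $I_p$-Cauchy by Theorem \ref{thm:equiv}, and therefore $d_I$-Cauchy by Hölder. Completeness of $(\E^1_{norm}(X,\omega), d_I)$ produces $\f \in \E^1_{norm}(X,\omega)$ with $d_I(\f_j, \f) \to 0$; in particular $\f_j \to \f$ in $L^1$ and $MA(\f_j) \to MA(\f)$ weakly by \cite[Proposition 5.6]{BBGZ}. The Cauchy property provides a uniform bound $I_p(\f_j, 0) \le C$, so Fatou's lemma combined with the weak convergence of the Monge--Ampère measures places $\f$ in $\E^p(X,\omega)$.

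The second step upgrades $d_I$-convergence to $d_p$-convergence, and is the main difficulty. My approach is to approximate $\f$ from above by the regularised tails $\sigma_k := (\sup_{j \ge k} \f_j)^*$, which are $\omega$-psh with $\sup_X \sigma_k = 0$, decrease in $k$, and converge pointwise to $\f$ (the $L^1$-limit and the pointwise decreasing limit agree as $\omega$-psh functions). The uniform $I_p$-bound on $(\f_j)$ shows each $\sigma_k$ lies in $\E^p(X,\omega)$. For $j \ge k$ we have $\f_j \le \sigma_k$, so the triangle inequality and Proposition \ref{prop:upperbound} give
$$
d_p(\f_j, \f) \le d_p(\f_j, \sigma_k) + d_p(\sigma_k, \f) \le \left( \int_X (\sigma_k - \f_j)^p MA(\f_j) \right)^{1/p} + d_p(\sigma_k, \f).
$$
The second summand tends to $0$ as $k \to \infty$ by continuity of $d_p$ along decreasing sequences (Proposition \ref{properties $D_p$}.iii). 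For the first, using $\f_\ell \le \sigma_k$ for every $\ell \ge k$, Lemma \ref{lem:dar2}.4 and Theorem \ref{thm:equiv} bound it by $C_n \sup_{\ell \ge k} d_p(\f_j, \f_\ell)$, which is arbitrarily small uniformly in $j \ge k$ by the Cauchy property. A diagonal extraction then yields $d_p(\f_j, \f) \to 0$. The non-normalised case $(\E^p(X,\omega), d_p)$ follows by splitting off the $\R$-component: Lemma \ref{lem:controlesup} shows that $d_p$-Cauchy sequences have Cauchy suprema, reducing to the normalised case.

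The final assertion is immediate from what precedes: if $d_p(\f_j, \f) \to 0$ then $I_p(\f_j, \f) \to 0$ by Theorem \ref{thm:equiv}, and Hölder's inequality $I \le 2^{1/p} I_p$ upgrades this to $I(\f_j, \f) \to 0$, which is by definition convergence in energy. The main obstacle is the upgrade in step two: non-linearity of the Monge--Ampère operator, and the fact that $I_p(\f_j, \f)$ involves $MA(\f)$ as a weak limit of $MA(\f_j)$, block any naive passage to the limit. The envelopes $\sigma_k$ are the device that turns the two-sided Cauchy information into the one-sided ordering $\f_j \le \sigma_k$, on which Proposition \ref{prop:upperbound} rewrites $d_p$ as an explicit $MA(\f_j)$-integral whose tails are directly controlled by the Cauchy hypothesis.
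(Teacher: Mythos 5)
Your Step 1 (producing the candidate $\f\in\E^p(X,\omega)$ from $d_I$-completeness, Fatou and the weak convergence of the Monge--Amp\`ere measures) is correct and coincides with the preliminary discussion the paper itself gives just before the proposition. The gap is in Step 2, at the inequality
$\left(\int_X(\sigma_k-\f_j)^p MA(\f_j)\right)^{1/p}\le C_n\,\sup_{\ell\ge k}d_p(\f_j,\f_\ell)$.
Lemma \ref{lem:dar2}.4 cannot deliver this: it requires the middle potential to be sandwiched between two potentials whose distance is already controlled, whereas $\sigma_k=(\sup_{\ell\ge k}\f_\ell)^*$ lies above every $\f_\ell$ of the tail and below none of them. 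What you really need is to interchange the supremum over the tail with the $L^p(MA(\f_j))$-norm, and the Cauchy hypothesis does not permit this (think of moving bumps in the $L^p$ model: each $\|f_\ell-f_j\|_p$ small, $\|\sup_\ell f_\ell-f_j\|_p$ large). Even after extracting so that $d_p(\f_i,\f_{i+1})\le 2^{-i}$ and writing $\sigma_k-\f_j\le\sum_{i\ge k}|\f_{i+1}-\f_i|$, Minkowski leaves you with the terms $\int_X|\f_{i+1}-\f_i|^p MA(\f_j)$, which are taken against the wrong Monge--Amp\`ere measure and are not controlled by $I_p(\f_i,\f_{i+1})$ or $d_p(\f_i,\f_{i+1})$ --- exactly the non-linearity obstruction you name in your last paragraph, and your own argument runs into it. (A posteriori, once completeness is known, $d_p(\f_j,\sigma_k)$ is indeed small, since $\sigma_k\downarrow\f$ and $d_p(\f_j,\f)\to0$; but using that here is circular.) A smaller issue: Lemma \ref{lem:controlesup} bounds $\sup_X\f$ by the distance to $0$, it does not show that $j\mapsto\sup_X\f_j$ is a Cauchy sequence, so the reduction of the non-normalized case is not automatic either.

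For comparison, the paper avoids this by working with envelopes from \emph{below}: after extracting so that $d_p(\f_j,\f_{j+1})\le 2^{-j}$, it forms the rooftops $\p_{j,k}=\f_j\vee\f_{j+1}\vee\cdots\vee\f_k$ and iterates the Pythagorean formula (Proposition \ref{prop:pyth}) to get $d_p(\f_j,\p_{j,k})\le 2^{-j+1}$; then $\p_{j,k}$ decreases in $k$ to $\p_j\in\E^p(X,\omega)$ and $\p_j$ increases to the limit $\p$, with the convergences $I_p(\p_j,\p)\to0$ supplied by \cite{BEGZ10} and converted into $d_p(\p_j,\p)\to 0$ by Proposition \ref{prop:upperbound}. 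The reason this works where your $\sigma_k$ does not is that a rooftop sits below both of its arguments, so Proposition \ref{prop:upperbound} (together with Proposition \ref{pro:kis}, which dominates $MA$ of the rooftop by the sum of the $MA$'s of the pieces) lets every increment be measured against the Monge--Amp\`ere measure of the smaller potential, and the Pythagorean identity assembles these estimates without ever comparing an increment against a mismatched measure. If you want to keep your outline, replace the upper envelopes $\sigma_k$ by these lower rooftops and argue along the paper's lines; as written, Step 2 does not go through.
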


\begin{proof}
Let $(\f_j) \in \E^p(X,\omega)^{\N}$ be a Cauchy sequence for $d_p$. We claim that there exists $\p \in \E^p(X,\omega)$ such that
$$
d_p(\f_j,\p) \rightarrow 0
\text{ and }
I(\p,\f_j) \rightarrow 0.
$$

Extracting and relabelling, we can assume that 
$$
d_p(\f_j,\f_{j+1}) \leq 2^{-j}, \quad j\geq 1.
$$
Set $\f_{-1} \equiv 0$ and for $k \geq j$,
$
\p_{j,k}:=\f_j \vee \f_{j+1} \vee \cdots \vee \f_k,
$
and observe that $\p_{j,k}:=\f_j \vee \p_{j,k+1}$. Hence the Pythagorean formula gives 
$$
d_p( \f_j, \p_{j,k}) \leq d_p( \f_j,  \p_{j+1,k})
\leq 2^{-j} +d_p( \f_{j+1}, \p_{j+1,k}).
$$
Repeating this argument we get 
$
d_p( \f_j, \p_{j,k}) \leq 2^{-j+1}.
$
 We then have
\begin{eqnarray*}
d_p(0,\p_{j,k}) &\leq& \sum_{\ell=-1}^{j-1} d_p(\f_\ell,\f_{\ell+1}) +d_p( \f_j, \p_{j,k}) \\
&\leq & \sum_{\ell=-1}^{j} d_p(\f_\ell,\f_{\ell+1}) +d_p( \f_{j+1}, \p_{j+1,k})\\
&\leq &d_p(0,\f_1)+ 2+ 2^{-j+1}.
\end{eqnarray*}

It follows from Theorem \ref{thm:equiv} that 
$I_p(0,\p_{j,k})$ is uniformly bounded, hence its decreasing limit
 $\p_j:=\lim_{k \rightarrow +\infty} \p_{j,k} \in \E^p(X,\omega)$ \cite[Proposition 2.19]{BEGZ10}.
From above we also have
$$ d_p(0,\p_{j})  \leq d_p(0,\f_1)+ 2+ 2^{-j+1}.$$

Lemma \ref{lem:controlesup} then ensures that  $(\sup_X \p_j)_j$ is uniformly bounded, hence $\p_j$ increases a.e. towards $\psi\in \psh(X, \omega)$. Also, $\p \in \E^p(X,\omega)$ thanks to \cite[Proposition 2.4]{BEGZ10}. Moreover,
\cite[Theorem 2.17]{BEGZ10} 
yields
$$
I(\p,\p_j)+I_p(\p_j,\p) \longrightarrow  0.
$$

It follows therefore from Proposition \ref{prop:upperbound} that
$d_p(\p,\p_j) \rightarrow 0$ and
$$
d_p(\p,\f_j) \leq d_p(\p,\p_j)+d_p(\p_j,\f_j) \leq d_p(\p,\p_j)+2^{-j+1} \rightarrow 0.
$$
Recalling that $\p_j \leq \f_j$, it follows from the quasi-triangle inequality, Proposition \ref{prop: I and I_1} and Theorem \ref{thm:equiv} that
$$
I(\p,\f_j) \leq c_n \left\{ I(\p,\p_j)+I(\p_j,\f_j) \right\}
\leq c_{n,p} \left\{ I(\p,\p_j)+d_p(\p_j,\f_j)  \right\}\rightarrow 0.
$$
\end{proof}

Recall that the {\it precompletion} of a metric space $(X,d)$
is the set of all Cauchy sequences $C_X$ of $X$, together with the semi-distance
$$
\d(\{x_j\},\{y_j\})=\lim_{j \rightarrow +\infty} d(x_j,y_j).
$$
The metric {\it completion} $(\overline{X},d)$ of $(X,d)$ is the quotient space $C_X/\sim$, where
$$
\{x_j\} \sim \{y_j\} \Longleftrightarrow \d(\{x_j\},\{y_j\})=0,
$$
equipped with the induced distance that we still denote by $d$.

%
%
We are now taking advantage of the fact that $\H_{bd}$ lives inside the complete metric space $(\E^p(\a),d_p)$ to conclude that:

\begin{thm}\label{completion}
The metric completion of $({\H}_{bd},d_p)$  is isometric to
$(\E^p(X,\omega),d_p)$.
\end{thm}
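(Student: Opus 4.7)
The plan is to present the theorem as a packaging of two ingredients that have already been established: completeness of the ambient space $(\E^p(X,\omega),d_p)$ and density of $\H_{bd}$ inside it. Indeed, Proposition~\ref{thm_Completeness} says that $(\E^p(X,\omega),d_p)$ is a complete metric space, while Proposition~\ref{properties $D_p$}(i) ensures that the extended distance $D_p$ restricted to $\H_{bd}$ coincides with the Mabuchi distance originally defined there. Together these say that $\H_{bd}$ sits isometrically inside the complete metric space $(\E^p(X,\omega),d_p)$; hence by the universal property of metric completions, the theorem reduces to showing that $\H_{bd}$ is dense in $(\E^p(X,\omega),d_p)$.

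For the density step, fix $\f \in \E^p(X,\omega)$ and produce a decreasing sequence $u_j \in \H_{bd}$ with $u_j\searrow \f$ pointwise. A natural construction is first to truncate, $\f^{(j)}:=\max(\f,-j)\in\psh(X,\omega)\cap L^\infty(X)$, which decreases to $\f$. For each fixed $j$, one then approximates $\f^{(j)}$ from above by continuous functions $f_{j,k}\searrow\f^{(j)}$ satisfying $dd^c f_{j,k}\le C_{j,k}\,\omega_X$ (obtained by a standard regularization on a resolution, using that $\f^{(j)}$ is bounded). Setting $u_{j,k}:=P_\omega(f_{j,k})\in\H_{bd}$, one has $\f^{(j)}\le u_{j,k}\le f_{j,k}$ since $\f^{(j)}$ itself is a candidate in the envelope, so $u_{j,k}\searrow \f^{(j)}$ as $k\to\infty$; a diagonal extraction yields $u_j\in\H_{bd}$ with $u_j\searrow\f$. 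Applying Proposition~\ref{properties $D_p$}(iii), which states that $d_p$ is continuous along decreasing sequences in $\E^p(X,\omega)$, we conclude $d_p(u_j,\f)\to 0$.

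With density in hand, standard metric space theory finishes the argument: every Cauchy sequence in $(\H_{bd},d_p)$ converges in the complete space $(\E^p,d_p)$, and every $\f\in\E^p$ is the limit of a $d_p$-Cauchy sequence in $\H_{bd}$ (namely the $u_j$ just constructed), so the natural isometric inclusion $(\H_{bd},d_p)\hookrightarrow (\E^p(X,\omega),d_p)$ identifies the latter with the metric completion of the former. The only mildly delicate point is the construction of the decreasing $\H_{bd}$-approximants, but this is routine once the upper bound $dd^c f\le C\omega_X$ is relaxed to the continuous setting in the definition of $\H_{bd}$; all the heavy analytical lifting (the equivalence of $d_p$ and $I_p$, the Pythagorean identity, and completeness) has already been carried out in the preceding sections.
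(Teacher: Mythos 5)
Your proof is correct and follows essentially the same route as the paper: both reduce the statement to completeness of $(\E^p(X,\omega),d_p)$ (Proposition \ref{thm_Completeness}) together with density of $\H_{bd}$, obtained by approximating any $\f\in\E^p(X,\omega)$ from above by a decreasing sequence in $\H_{bd}$ and using the good behaviour of $d_p$ along decreasing limits (the paper re-derives this convergence explicitly via Proposition \ref{prop:upperbound}, \cite[Lemma 3.5]{GZ07} and \cite[Theorem 2.17]{BEGZ10}, working with $E$-normalized potentials, whereas you invoke Proposition \ref{properties $D_p$}(iii)). Your explicit construction of the approximants $P_\omega(f_{j,k})$ is sound — note only that the truncation step is superfluous, since smooth $f_k\searrow\f$ already give $P_\omega(f_k)\in\H_{bd}$ decreasing to $\f$, which also sidesteps the monotonicity issue hidden in your diagonal extraction (otherwise fixable by replacing $f_{j,k}$ with $\min(f_{1,k},\dots,f_{j,k})$, which still satisfies the bound $dd^c \leq C\omega_X$).
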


Thanks to Theorem \ref{thm:equiv}, an equivalent formulation of the above statement is that the metric completion of $({\H}_{bd},d_p)$ is bi-Lipschitz equi\-valent to $(\E^p(X,\omega),I_p)$.

\begin{proof}
We work at the level of normalized potentials, 
$$
 \E^p_{0}(X,\omega)=\{ \f \in \E^p(X,\omega) \, | \, E( \f)=0 \}
$$
and $\H_{0}:=\{ \f \in \HH_{bd} \, | \, \omega+dd^c \f \geq 0 \text{ and } E( \f)=0 \}$.

Since $(\E^p_{0}(X,\omega),d_p)$ is a complete metric space that contains 
$\H_{0}$, it suffices to show that the latter is dense in $\E^p_{0}(X,\omega)$.
Fix $\f \in \E^p_{0}(X,\omega)$ and let $(\f_j) \in {\H}_{0}^\N$ be a
sequence quasi-decreasing to $\f$ :
the normalization condition $E(\f_j)=0$ prevents from getting a truly decreasing sequence, 
however $\f_j+\e_j$ is decreasing where $\e_j$ is  a sequence of real numbers decreasing to zero.
It follows from Proposition \ref{prop:upperbound} that
$$
d_p(\f_{j+\ell}+\varepsilon_{j+l},\f_j+{\varepsilon_j})^p \leq \int_X (\f_j-\f_{j+\ell})^p MA(\f_{j+\ell})+\e_j.
$$
Now \cite[Lemma 3.5]{GZ07} shows that the latter is bounded from above by
$$
(p+1)^n \int_X (\f_j-\f)^p MA(\f)+\e_j
$$
which converges to zero as $j \rightarrow +\infty$, as follows from the monotone convergence theorem.
Therefore $(\f_j)$ is a Cauchy sequence in $({\H}_{0} ,d_p)$ which converges to $\f$ since 
$$
0 \leq d_p(\f,\f_j+\varepsilon_j) \leq 
\liminf_{\ell \rightarrow +\infty} d_p(\f_{j+\ell},\f_j) 
\leq 2 (1+p)^{n/p} I_p(\f_j,\f) +\e_j^{1/p} \rightarrow 0
$$ 
by Proposition \ref{prop:upperbound} and \cite[Theorem 2.17]{BEGZ10}.

\smallskip

We note the following alternative approach of independent interest.
One  first shows that ${\H}_{0}$ is dense in the set of all bounded $\omega$-psh functions.
Given $\f \in \E_{0}^p(X,\omega)$ one then considers its ``canonical approximants" 
$$
\f_j=\max(\f,-j) +\e_j \in PSH_{0}(X,\omega) \cap L^{\infty}(X)
$$
which  decrease towards $\f \in \E_0^p(X,\omega)$.
It follows from Proposition \ref{prop:upperbound} that 
\begin{eqnarray*}
\lefteqn{ d_p(\f_{j+\ell},\f_j)^p \leq  o(1)+\int_X (\f_j-\f_{j+\ell})^p MA(\f_{j+\ell})}   \\
&&=o(1)+\int_{(\f \leq -j-\ell)} \ell^p MA(\f_{j+\ell}) +\int_{(-j-\ell<\f<-j)} (\f_j-\f_{j+\ell})^p MA(\f)  \\
&&=o(1)+\int_{(\f \leq -j-\ell)} \ell^p MA(\f)+\int_{(-j-\ell<\f<-j)} (\f_j-\f_{j+\ell})^p MA(\f)  \\
&&  \leq o(1)+\int_{(\f<-j)} \f^p MA(\f),
\end{eqnarray*}
where we have used the maximum principle together with the fact that 
$$
\int_{(\f \leq -k)}  MA(\f_{k})=\int_X MA(\f_{k})-\int_{(\f>-k)} MA(\f_{k})=
\int_{(\f \leq -k)} MA(\f),
$$
since $\f \in \E(X,\omega)$,
as follows again from the maximum principle.
We infer that $(\f_j)$ is a Cauchy sequence which converges to $\f$.
\end{proof}

 We are now in position to prove Theorem B of the introduction:
 
\begin{cor}\label{thm: completion}
Assume $\omega=\pi^*\omega_Y$, where $\omega_Y$ is a Hodge form. Then the metric completion $(\overline{\H}_\a,d_p)$  is isometric to
$(\E^p(\a),d_p)$. Similarly the metric completion $(\overline{\H}_\omega,d_p)$  is isometric to
$(\E^p(X,\omega),d_p)$.
\end{cor}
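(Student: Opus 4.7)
The plan is to leverage Theorem \ref{completion} and reduce the corollary to establishing that $\H_\omega$ is $d_p$-dense in $\H_{bd}$. First I observe that $\H_\omega \subseteq \H_{bd}$: any smooth $\omega$-psh potential $\f \in \H_\omega$ is trivially its own envelope $P_\omega(\f) = \f$, while $dd^c\f = \omega_\f - \omega$ is dominated by $C\omega_X$ for some $C > 0$ since $\f$ is smooth on the compact manifold $X$. Combined with Theorem \ref{completion}, which identifies the completion of $(\H_{bd}, d_p)$ with $(\E^p(X,\omega), d_p)$, this reduces the second statement to showing that every $\f \in \H_{bd}$ admits a sequence $(\f_k) \subset \H_\omega$ with $d_p(\f_k, \f) \to 0$.

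For the density step, I exploit that $\omega = \pi^*\omega_Y$ vanishes on the fibers of $\pi$, so every bounded $\omega$-psh function on $X$ descends to a bounded $\omega_Y$-psh function $\psi$ on the normal projective variety $Y$ via $\f = \pi^*\psi$. The Hodge assumption is essential here: it guarantees $[\omega_Y] \in H^{1,1}(Y, \Q)$ and hence the existence of an ample line bundle $L$ on $Y$ with $c_1(L)$ proportional to $[\omega_Y]$. Applying Bergman-kernel / Kodaira--Tian approximation on the projective normal variety $Y$ (or pulling back Fubini--Study type potentials via embeddings $Y \hookrightarrow \CPP^{N_k}$ induced by sections of $L^k$), one obtains a sequence $\psi_k \in \H_{\omega_Y}$ decreasing to $\psi$. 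Setting $\f_k := \pi^*\psi_k \in \H_\omega$, the sequence decreases to $\f$, and Proposition \ref{prop:upperbound} together with \cite[Lemma 3.5]{GZ07} yield $d_p(\f_k, \f)^p \leq (p+1)^n \int_X (\f_k - \f)^p MA(\f)$, which tends to zero by dominated convergence.

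For the first statement about $\H_\a$, I pass to the quotient via the Aubin--Mabuchi functional $E$ introduced in Section \ref{AubinMabuchi_functional}: the riemannian splitting $\H_\omega \simeq \H_\a \times \R$ and its analogue $\E^p(X,\omega) \simeq \E^p(\a) \times \R$ (via normalization $E = 0$) allow me to descend the preceding argument. The approximants $\f_k$ can be re-normalized to $\f_k - E(\f_k)$ to lie in the slice $\{E = 0\}$, the distance $d_p$ restricts compatibly to this slice, and the completion of $\H_\a$ is thereby identified with $\E^p(\a)$.

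The main obstacle is precisely the density step: producing smooth K\"ahler potential approximants that lie in $\H_\omega = \pi^*\H_{\omega_Y}$, rather than merely in the broader spaces $\H_{\omega_\e}$ furnished by classical Demailly regularization. The Hodge condition is exactly what enables this via projective-embedding and Bergman-kernel techniques on $Y$; without the rationality of $[\omega_Y]$ there is no positive line bundle to quantize against and one can only regularize inside $\H_{\omega_\e}$, which is insufficient to conclude the isometry with $(\E^p(X,\omega),d_p)$.
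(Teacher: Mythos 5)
Your overall reduction is the same as the paper's: by Theorem \ref{completion} everything hinges on the density of $\H_\omega$ in $(\H_{bd},d_p)$, and the bookkeeping you do around this (the inclusion $\H_\omega\subset\H_{bd}$, the descent of bounded $\omega$-psh functions on $X$ to bounded $\omega_Y$-psh functions on $Y$, the estimate $d_p(\f_k,\f)^p\le (p+1)^n\int_X(\f_k-\f)^p MA(\f)$ via Proposition \ref{prop:upperbound} and \cite[Lemma 3.5]{GZ07}, and the $E$-normalization to pass from $\H_\omega$ to $\H_\a$) is correct. The gap is in the density step itself, which is the only real content of the corollary. You invoke ``Bergman-kernel / Kodaira--Tian approximation on the projective normal variety $Y$'' to produce a sequence $\psi_k\in\H_{\omega_Y}$ decreasing to a given bounded $\omega_Y$-psh function $\psi$, but on a \emph{singular} variety this is not an off-the-shelf result. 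Demailly-type regularization and the $L^2$/Ohsawa--Takegoshi input behind Bergman-kernel asymptotics are available on smooth spaces; running them on the resolution $X$ only lands you in $\H_{\omega_\e}$, since $\omega=\pi^*\omega_Y$ is merely semipositive (this is exactly the difficulty the paper records in Remark \ref{rem:lenght_semipositive}). Moreover Bergman-type approximants $\tfrac{1}{2k}\log\sum_j|s_j|^2$ are neither automatically decreasing nor automatically convergent to $\psi$ without an extension theorem valid on the singular $Y$; their smoothness and strict positivity near $Y_{sing}$ also need justification. So the step you present as routine is precisely the nontrivial point.

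The paper closes this gap by quoting \cite[Corollary C]{CGZ}: because $\omega_Y$ is a Hodge form, $Y$ embeds into some $\CPP^N$ and every (bounded) $\omega_Y$-psh function is a decreasing limit of restrictions to $Y$ of smooth ambient Fubini--Study-psh functions; regularization is thus performed on the smooth ambient projective space and then restricted, which gives the density of $\H_\omega$ in $\H_{bd}$, and Theorem \ref{completion} concludes. If you prefer to keep a quantization-flavoured argument, you would still need an extension statement (for the psh function itself, or for enough holomorphic sections with estimates) on the singular $Y$ --- that is essentially the content of \cite{CGZ} --- so as written your density claim is unsupported, even though the role you assign to the Hodge hypothesis is the right one.
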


\begin{proof}
Thanks to \cite[Corollary C]{CGZ} we can insure that the space $\H_\omega$ is dense in $\H_{bd}$. The result then follows from Theorem \ref{completion}.
\end{proof}

\subsection{Weak geodesics}


\subsubsection{Finite energy geodesics}

We now define finite energy geodesics joining two finite energy endpoints $\f_0,\f_1 \in \E^1(X,\omega)$.
Fix $j \in \N$ and consider
$\f_0^{j}, \f_1^{j}$ bounded $\omega$-psh functions decreasing to $\f_0,\f_1$.
We let $\f_{t,j}$ denote the bounded geodesic joining $\f_0^{j}$ to $\f_1^{j}$.
 It follows from the maximum principle
that $j \mapsto \f_{t,j}$ is non-increasing. 
We can thus set
$$
\f_t:=\lim_{j \rightarrow +\infty} \f_{t,j}.
$$

\begin{defi}
The map $(t,x) \mapsto \f_t(x)$ is the (finite energy) Mabuchi geodesic joining $\f_0$ to $\f_1$.
\end{defi}

The $\f_t$'s form indeed a family of finite energy functions : since 
$t\mapsto E(\f_{t,j})$ is affine (Lemma \ref{lem:AubinMabuchi1}), we infer  for all $j \in \N$,
$$
 (1-t) E(\f_0) +t E(\f_1) \leq  (1-t) E(\f_0^{(j)}) +t E(\f_1^{(j)}) =E(\f_{t,j}),
$$
hence $\f_t \in \E^1(X,\omega)$ with $ (1-t) E(\f_0) +t E(\f_1) =E(\f_t)$.

It follows from the maximum principle that $\f_t$ is independent of the choice of the approximants
$\f_0^{j}, \f_1^{j}$:
if we set  $\varphi(x,z):=\f_t(x)$, $z=t+is$, then $\varphi$ is a maximal
$\omega$-psh function in $X \times S$, as a decreasing limit of maximal $\omega$-psh functions.
It is thus the unique maximal $\omega$-psh function in $X \times S$ with boundary values
$\f_0,\f_1$.

\smallskip
  
When $\f_0,\f_1$ belong to $\E^p(X,\omega)$, these weak geodesics are again {\it metric geodesics}
in the complete metric space $(\E^p(X,\omega),d_p)$:

\begin{prop}\label{prop:gedisically closed}
Given $\f_0,\f_1 \in \E^p(X,\omega)$, the Mabuchi geodesic $\f$ joining 
$\f_0$ to $\f_1$ lies in $\E^p(X,\omega)$ and  satisfies,
 for all $t,s \in [0,1]$,
$$
d_p(\f_t,\f_s)=|t-s| \, d_p(\f_0,\f_1).
$$

Thus $(\E^p(X,\omega),d_p)$ is a geodesic space.
\end{prop}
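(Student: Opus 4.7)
The approach is to approximate $\f_0,\f_1$ from above by bounded $\omega$-psh functions $\f_0^j \searrow \f_0$, $\f_1^j \searrow \f_1$, and consider the bounded geodesics $\f_{t,j}$ joining $\f_0^j$ to $\f_1^j$; as observed before the definition of finite energy geodesic, $j \mapsto \f_{t,j}$ is non-increasing and $\f_{t,j} \searrow \f_t$. The plan is to pass to the limit $j \to \infty$ in the identity provided by Proposition \ref{prop:bddmin},
$$d_p(\f_{t,j},\f_{s,j}) = |t-s|\,d_p(\f_0^j,\f_1^j),$$
using the continuity of $d_p$ along decreasing sequences in $\E^p(X,\omega)$ (Proposition \ref{properties $D_p$}.iii). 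For this to make sense, one must first establish that $\f_t \in \E^p(X,\omega)$.

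To prove the latter, I apply Proposition \ref{prop:bddmin} with $s=0$ to get $d_p(\f_0^j,\f_{t,j}) = t\,d_p(\f_0^j,\f_1^j)$, and combine with the triangle inequality to obtain
$$d_p(0,\f_{t,j}) \leq d_p(0,\f_0^j) + t\,d_p(\f_0^j,\f_1^j).$$
Since $\f_0^j \searrow \f_0$ and $\f_1^j \searrow \f_1$ in $\E^p(X,\omega)$, Proposition \ref{properties $D_p$}.iii together with the triangle inequality forces the right-hand side to converge to $d_p(0,\f_0) + t\,d_p(\f_0,\f_1) < \infty$. Hence $(d_p(0,\f_{t,j}))_j$ is uniformly bounded, and the extension of Theorem \ref{thm:equiv} to finite energy classes (Proposition \ref{properties $D_p$}) yields a uniform bound on $I_p(0,\f_{t,j})$, and in particular on $\int_X |\f_{t,j}|^p\,MA(\f_{t,j})$. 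Since $\f_{t,j} \searrow \f_t$, the stability of $\E^p(X,\omega)$ under decreasing limits with uniformly bounded weighted Monge--Amp\`ere energy (\cite[Proposition 2.19]{BEGZ10}, already invoked in the proof of Proposition \ref{thm_Completeness}) yields $\f_t \in \E^p(X,\omega)$.

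Once membership in $\E^p$ is known for $\f_t$ and $\f_s$, both sides of the bounded geodesic identity pass to the limit: the left-hand side tends to $d_p(\f_t,\f_s)$ because $d_p(\f_{t,j},\f_t) \to 0$ and $d_p(\f_{s,j},\f_s) \to 0$ by Proposition \ref{properties $D_p$}.iii, combined with the triangle inequality; the right-hand side tends to $|t-s|\,d_p(\f_0,\f_1)$ as in the previous paragraph. This establishes the geodesic identity $d_p(\f_t,\f_s) = |t-s|\,d_p(\f_0,\f_1)$ and therefore that $(\E^p(X,\omega),d_p)$ is a geodesic metric space.

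The main obstacle is the first step: converting a purely metric $d_p$-bound on a decreasing family of bounded approximants into membership of the nonlinear finite energy class $\E^p(X,\omega)$. This rests essentially on the bi-Lipschitz comparison between $d_p$ and $I_p$ from Theorem \ref{thm:equiv} (in its $\E^p$-extended form given in Proposition \ref{properties $D_p$}) together with the BEGZ compactness of $\E^p$ along decreasing sequences of controlled weighted energy. The subsequent limit passage in the geodesic identity is then a routine application of the continuity of $d_p$ along decreasing sequences.
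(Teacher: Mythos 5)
Your argument is correct and follows the paper's overall scheme (approximate by bounded geodesics $\f_{t,j}\searrow\f_t$, then pass to the limit in the identity of Proposition \ref{prop:bddmin} using Proposition \ref{properties $D_p$}), but it diverges from the paper in the one step that requires real work, namely showing $\f_t\in\E^p(X,\omega)$. The paper proves this by a direct pluripotential estimate: since the rooftop envelope is a $t$-independent subgeodesic, $\f_0\vee\f_1\le \f_0^{j}\vee\f_1^{j}\le\f_{t,j}$, so \cite[Lemma 3.5]{GZ07} together with Lemma \ref{lem:epgeod} gives the uniform bound $\int_X(-\f_{t,j})^pMA(\f_{t,j})\le (p+1)^n\int_X(-\f_0\vee\f_1)^pMA(\f_0\vee\f_1)<+\infty$, and monotone convergence concludes. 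You instead obtain the uniform energy bound metrically: $d_p(0,\f_{t,j})\le d_p(0,\f_0^j)+t\,d_p(\f_0^j,\f_1^j)$ by Proposition \ref{prop:bddmin} and the triangle inequality, then the $d_p$--$I_p$ equivalence (Theorem \ref{thm:equiv} in the extended form of Proposition \ref{properties $D_p$}) bounds $I_p(0,\f_{t,j})$, and \cite[Proposition 2.19]{BEGZ10} gives $\f_t\in\E^p(X,\omega)$ along the decreasing sequence -- exactly the mechanism used in the proof of Proposition \ref{thm_Completeness}, so there is no circularity. Your route buys economy (no need for Lemma \ref{lem:epgeod} or the envelope comparison) at the price of leaning on the full comparison package and on the tacit identification of $D_p$ with the $d_p$ of Definition \ref{def: bd} on merely bounded potentials (the coincidence in Proposition \ref{properties $D_p$} is stated on $\H_{bd}$); the paper makes the same identification implicitly when it reduces the metric identity to the bounded case, so this is a shared convention rather than a gap. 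The paper's route is more self-contained and yields an explicit quantitative bound by the energy of $\f_0\vee\f_1$, which is also what makes Lemma \ref{lem:epgeod} worth stating separately.
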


\begin{proof}
We can assume without loss of generality that $\f_0,\f_1 \leq 0$.
Fix $j \in \N$ and consider
$\f_0^{j}, \f_1^{j}$ bounded $\omega$-psh functions decreasing to $\f_0,\f_1$.
We let $\f_{t,j}$ denote the bounded geodesic joining $\f_0^{j}$ to $\f_1^{j}$,
which decreases towards $\f_t$ as $j$ increases to $+\infty$.
Observe that
$$
\f_0 \vee \f_1 \leq \f_0^{j} \vee \f_1^{j} \leq \f_{t,j}.
$$
It follows therefore from \cite[Lemma 3.5]{GZ07} and Lemma \ref{lem:epgeod} that
$$
\int_X (-\f_{t,j})^p MA(\f_{t,j}) \leq 
(p+1)^n  \int_X  (-\f_0 \vee \f_1)^p MA(\f_0 \vee \f_1) <+\infty
$$
hence the monotone convergence theorem yields 
$
\int_X (-\f_{t})^p MA(\f_{t}) <+\infty,
$
for all $t$,
i.e. $\f_t \in \E^p(X,\omega)$.

The remaining assertion is proved as in the case of bounded geodesics (Proposition \ref{prop:bddmin}).
\end{proof}

\begin{lem} \label{lem:epgeod}
Assume $0 \geq \f_0,\f_1 \in \E^p(X,\omega)$. Then 
$\f_0 \vee \f_1 \in \E^p(X,\omega)$ and
$$
 \int_X (-\f_0 \vee \f_1)^p MA(\f_0 \vee \f_1) \leq  \int_X (-\f_0)^p MA(\f_0)+ \int_X (-\f_1)^p MA( \f_1).
$$
\end{lem}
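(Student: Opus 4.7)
The plan is to reduce the statement to the bounded case via the canonical truncations $\f_i^j := \max(\f_i, -j)$, $i = 0,1$, which are bounded $\omega$-psh functions (still $\leq 0$) decreasing to $\f_i$, and then exploit the explicit description of the Monge-Amp\`ere measure of an envelope given by Proposition \ref{pro:kis}. Setting $\p := \f_0 \vee \f_1$ and $\p_j := \f_0^j \vee \f_1^j$, a standard maximality argument shows $\p_j \downarrow \p$: since $\f_i^j \geq \f_i$ we have $\p \leq \p_j$ for all $j$, while the $\omega$-psh decreasing limit of $\p_j$ lies below $\min(\f_0,\f_1)$ and hence below $\p$ by maximality.

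To handle the bounded case, I would apply Proposition \ref{pro:kis} (after a Demailly-type regularization placing the pair in $\H_{bd}$ if necessary), which yields
$$
\MA(\f_0 \vee \f_1) = \mathbf{1}_{\{\f_0 \vee \f_1 = \f_0\}} \MA(\f_0) + \mathbf{1}_{\{\f_0 \vee \f_1 = \f_1 < \f_0\}} \MA(\f_1).
$$
Since $(-\f_0 \vee \f_1)^p$ equals $(-\f_0)^p$ on the first contact set and $(-\f_1)^p$ on the second, integrating and bounding each piece by the corresponding full integral gives
$$
\int_X (-\p_j)^p \MA(\p_j) \leq \int_X (-\f_0^j)^p \MA(\f_0^j) + \int_X (-\f_1^j)^p \MA(\f_1^j).
$$

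To conclude, I would pass to the limit in $j$. Using that $\f_i \in \E(X,\omega)$, one has $\MA(\f_i^j) = \mathbf{1}_{\{\f_i > -j\}} \MA(\f_i) + \mathbf{1}_{\{\f_i \leq -j\}}\MA(\f_i^j)$ with $\int_{\{\f_i\leq -j\}} \MA(\f_i^j) = \int_{\{\f_i \leq -j\}} \MA(\f_i)$; this gives
$$
\int_X (-\f_i^j)^p \MA(\f_i^j) = \int_{\{\f_i > -j\}} (-\f_i)^p \MA(\f_i) + j^p \int_{\{\f_i \leq -j\}} \MA(\f_i) \longrightarrow \int_X (-\f_i)^p \MA(\f_i),
$$
the tail vanishing since $j^p \mathbf{1}_{\{\f_i \leq -j\}} \leq (-\f_i)^p \mathbf{1}_{\{\f_i \leq -j\}} \in L^1(\MA(\f_i))$. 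This produces a uniform bound $\sup_j \int_X (-\p_j)^p \MA(\p_j) < +\infty$ along the decreasing sequence $\p_j \downarrow \p$; the finite energy monotonicity machinery (e.g.\ \cite[Proposition 2.19 and Theorem 2.17]{BEGZ10}) then forces $\p \in \E^p(X,\omega)$ together with the lower semicontinuity $\int_X (-\p)^p \MA(\p) \leq \liminf_j \int_X (-\p_j)^p \MA(\p_j)$, giving the claim. I expect this final semicontinuity step, where one must simultaneously obtain membership in $\E^p$ and pass to the limit, to be the principal technical obstacle; everything else is bookkeeping with the structure of the Monge-Amp\`ere measure of an envelope.
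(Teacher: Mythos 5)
Your proposal is correct and follows essentially the same route as the paper: establish the inequality for regular (bounded) potentials via the description of $MA(\f_0\vee\f_1)$ in Proposition \ref{pro:kis}, note that the integrands match on the contact sets since $\f_0,\f_1\leq 0$, and then pass to the limit by monotone approximation. The paper compresses the approximation step into one line, whereas you spell it out with canonical truncations and the convergence results of \cite{BEGZ10}, which is consistent with how the paper handles similar limits elsewhere.
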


\begin{proof}
It suffices to establish the claimed inequality when $\f_0,\f_1 \in \HH_{bd}$ and then proceed by approximation.
It follows from Proposition \ref{pro:kis} that
$$
MA(\f_0 \vee \f_1) \leq \1_{\{ \f_0 \vee \f_1=\f_0\}} MA(\f_0)+\1_{\{ \f_0 \vee \f_1=\f_1\}} MA(\f_1).
$$
The inequality follows since $\f_0,\f_1 \leq 0$.
\end{proof}

\subsubsection{(Non) uniqueness of geodesics}

Fix $\f_0,\f_1 \in \E^1(X,\omega)$. If the sets $(\f_0<\f_1)$ and $(\f_0>\f_1)$ are both non empty,
the function $\f_0 \vee \f_1$ differs from $\f_0$ and $\f_1$ and it follows from Proposition \ref{prop:pyth}
that 
$$
d_1(\f_0,\f_1)=d_1(\f_0, \f_0 \vee \f_1) +d_1(\f_0 \vee \f_1,\f_1),
$$
thus the concatenation of the  geodesic joining $\f_0$ to $\f_0 \vee \f_1$ and that joining $\f_0 \vee \f_1$ to $\f_1$
gives another minimizing path joining $\f_0$ to $\f_1$.

When $\f_0 \leq \f_1$, this argument does not work anymore, but there are nevertheless very many minimizing paths,
as shown by the following result:

\begin{lem}
Assume $\f_0,\f_1 \in \HH_{bd}$ are such that $\f_0 \leq \f_1$. Let $(\p_t)_{0 \leq t \leq 1}$ be a path 
joining $\f_0$ to $\f_1$. Then
$$
\ell_1(\p)=d_1(\f_0,\f_1) \Longleftrightarrow \dot{\p}_t(x) \geq 0, \; {\rm for\; a.e.}\, t,x.
$$

In particular $t \mapsto t\f_1(x)+(1-t)\f_0$ is a minimizing path for $d_1$ which is not a Mabuchi geodesic, unless
$\f_1-\f_0$ is constant.
\end{lem}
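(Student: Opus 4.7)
The plan is to exploit the formula for $d_1$ available in the ordered case together with the affineness of the Aubin--Mabuchi energy $E$ along Mabuchi geodesics. Since $\f_0\leq \f_1$, one has $\f_0\vee\f_1=\f_0$, so Proposition \ref{lem:formuled1} specialises to
$$
d_1(\f_0,\f_1)=E(\f_1)+E(\f_0)-2E(\f_0)=E(\f_1)-E(\f_0).
$$
Next, Lemma \ref{lem:AubinMabuchi1} furnishes the derivative identity $\frac{d}{dt}E(\p_t)=\int_X \dot{\p}_t\,MA(\p_t)$ for any sufficiently regular path $(\p_t)$ in $\H_{bd}$. Integrating in $t$ and using the pointwise bound $\dot{\p}_t \leq |\dot{\p}_t|$ produces
$$
d_1(\f_0,\f_1)=\int_0^1\int_X \dot{\p}_t\,MA(\p_t)\,dt \leq \int_0^1\int_X |\dot{\p}_t|\,MA(\p_t)\,dt=\ell_1(\p),
$$
with equality iff $|\dot{\p}_t|=\dot{\p}_t$ on a set of full $MA(\p_t)\otimes dt$-measure, i.e.\ iff $\dot{\p}_t\geq 0$ a.e.\ $(t,x)$. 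This settles the stated equivalence.

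For the linear path $\p_t=(1-t)\f_0+t\f_1$, one has $\dot{\p}_t\equiv \chi:=\f_1-\f_0\geq 0$ pointwise by hypothesis, so the equality case applies: $\ell_1(\p)=d_1(\f_0,\f_1)$ and the linear interpolant is $d_1$-minimising. To show it is not a Mabuchi geodesic unless $\chi$ is constant, suppose the contrary. Lemma \ref{lem:AubinMabuchi1} then forces $t\mapsto E(\p_t)$ to be affine, so its derivative $\int_X \chi\,MA(\p_t)$ is independent of $t$. Evaluating at $t=0$ and $t=1$ yields
$$
I(\f_0,\f_1)=\int_X(\f_0-\f_1)\bigl(MA(\f_1)-MA(\f_0)\bigr)=\int_X\chi\,MA(\f_0)-\int_X\chi\,MA(\f_1)=0.
$$
Since $I$ is symmetric and strictly positive off the diagonal on $\E^1_{norm}(X,\omega)$ and invariant under adding constants to its arguments, this forces $\chi=\f_1-\f_0$ to be constant on $X$.

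The main obstacle is a minor regularity issue: the identity $\frac{d}{dt}E(\p_t)=\int_X \dot{\p}_t\,MA(\p_t)$ is cleanly established for smooth paths in $\HH_\omega$, so one must verify it for the paths $(\p_t)$ in $\H_{bd}$ under consideration. For the linear interpolant $\p_t$ is polynomial in $t$ at each $x$, so the identity follows from direct differentiation and Stokes' theorem exactly as in the proof of Lemma \ref{lem:AubinMabuchi1}; for a general path one argues by smooth approximation within $\H_{bd}$ in the same fashion. All other ingredients (the formula $d_1=E(\f_1)-E(\f_0)$, the derivative of $E$, and the non-degeneracy of $I$ on $\E^1$ modulo additive constants) are explicitly available from the preceding sections.
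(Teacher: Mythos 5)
Your proof of the equivalence $\ell_1(\p)=d_1(\f_0,\f_1)\Leftrightarrow \dot{\p}_t\geq 0$ a.e.\ is essentially the paper's argument: both use Proposition \ref{lem:formuled1} (which, since $\f_0\vee\f_1=\f_0$ when $\f_0\leq\f_1$, gives $d_1=E(\f_1)-E(\f_0)$), the identity $\frac{d}{dt}E(\p_t)=\int_X\dot{\p}_t\,MA(\p_t)$, and the triangle-type inequality $\int_X\dot{\p}_t\,MA(\p_t)\leq\int_X|\dot{\p}_t|\,MA(\p_t)$ with its equality case; the regularity caveat you raise (the derivative formula for $E$ along merely bounded paths) is glossed over by the paper as well, and your remark that the linear interpolant is polynomial in $t$ disposes of it for the case actually needed.

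Where you genuinely diverge is the final assertion that the affine path is not a Mabuchi geodesic unless $\f_1-\f_0$ is constant. The paper argues through the PDE: since the path is smooth in $t$, being a geodesic means $\ddot{\p}_t\,MA(\p_t)=\frac{n}{V}d\dot{\p}_t\wedge d^c\dot{\p}_t\wedge\omega_{\p_t}^{n-1}$, and $\ddot{\p}_t=0$ forces $d(\f_1-\f_0)\wedge d^c(\f_1-\f_0)\wedge\omega_{\p_t}^{n-1}=0$, i.e.\ $\f_1-\f_0$ constant. You instead use the affineness of $E$ along Mabuchi geodesics (Lemma \ref{lem:AubinMabuchi1}, which the paper itself applies to bounded geodesics later on), note that along the euclidean segment $\frac{d}{dt}E(\p_t)=\int_X(\f_1-\f_0)\,MA(\p_t)$ is a polynomial in $t$, and conclude from equality of the endpoint derivatives that $I(\f_0,\f_1)=0$, whence $\f_1-\f_0$ is constant by the non-degeneracy and translation invariance of $I$. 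Both routes are valid. Your version avoids invoking the pointwise geodesic PDE for what is a priori only a weak (envelope) geodesic, which is a genuine cleanliness gain in this low-regularity setting; the price is that you lean on the BBEGZ non-degeneracy of $I$ on $\E^1_{norm}$, whereas the paper's argument is self-contained at the level of the geodesic equation and also localizes the conclusion (vanishing of $d\chi\wedge d^c\chi\wedge\omega_{\p_t}^{n-1}$) more directly. One cosmetic point: the equality case you obtain is a.e.\ with respect to $MA(\p_t)\otimes dt$, which is in fact the same level of precision as the paper's "a.e.\ $(t,x)$", so no gap there.
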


\begin{proof}
Observe that 
\begin{eqnarray*}
\ell_1(\p)&=& \int_0^1 \int_X \left| \dot{\p}_t(x) \right| MA(\p_t) \, dt
 \geq  \left| \int_0^1 \int_X  \dot{\p}_t(x)  MA(\p_t) \, dt \right| \\
 &=& \left| \int_0^1 \frac{d}{dt}E(\p_t) \, dt  \right|
 =\left| E(\f_1)-E(\f_0)  \right| =d_1(\f_0,\f_1)
 \end{eqnarray*}
 where the last identity follows from Proposition \ref{lem:formuled1}. There is equality iff $|\dot{\p}_t(x)|=\dot{\p}_t(x) \geq 0$ for a.e. $(t,x)$ (the sign has to be positive because $\p_0=\f_0 \leq \f_1=\p_1$).
 
 In particular $t \mapsto \p_t=t\f_1(x)+(1-t)\f_0$ has this property, since $\dot{\p}_t=\f_1-\f_0 \geq 0$.
We recall that, since $\psi_t$ is a smooth path, the geodesic equation can be written as
$$\ddot{\p}_t\MA(\p_t)= \frac{n}{V}d\dot{\psi_t}\wedge d^c \dot{\psi_t}  \wedge \omega_{\p_t}^{n-1}$$
(see Section \ref{subsec: Mabuchi}). Now $\ddot{\p}_t =0$ hence $t \mapsto \p_t$ is not a Mabuchi geodesic, unless
 $
 d( \f_1-\f_0)\wedge d^c (\f_1-\f_0) \wedge \omega_{\p_t}^{n-1} =0
 $
 for all $t$, i.e. $\f_1-\f_0$ is contant.
\end{proof}

On the other hand it follows from the work of Darvas \cite[Lemma 6.12]{Dar14} (based on \cite[Section 2.4]{CC02}) that geodesics are unique in $\E^2(X,\omega)$:

\begin{thm} Assume $\omega=\pi^* \omega_Y$, where $\omega_Y$ is a Hodge form. Then the space $(\E^2(X,\omega),d_2)$ is a CAT(0) space.
\end{thm}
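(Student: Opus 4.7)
The strategy is to reduce to the K\"ahler case treated by Darvas, then pass to the semi-positive setting by the $\omega_\varepsilon$-approximation developed in Section \ref{sec:regularity}, and finally extend from $\mathcal{H}_\omega$ to all of $\mathcal{E}^2$ by density. Recall that a geodesic metric space is CAT(0) iff the Bruhat--Tits (semi-parallelogram) inequality
$$
d_2(\f, m)^2 \le \tfrac{1}{2} d_2(\f, \f_0)^2 + \tfrac{1}{2} d_2(\f, \f_1)^2 - \tfrac{1}{4} d_2(\f_0, \f_1)^2
$$
holds whenever $m$ is the midpoint of a geodesic joining $\f_0$ to $\f_1$. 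Proposition \ref{prop:gedisically closed} already provides the geodesic structure together with the existence of midpoints, so only this inequality needs to be verified.

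The first step is to prove the inequality assuming $\f_0, \f_1, \f \in \mathcal{H}_\omega$. On the K\"ahler manifold $(X, \omega_\varepsilon)$, Darvas' result \cite[Lemma 6.12]{Dar14} -- whose proof rests on the Chen--Calabi computation of the second time-derivative of $t \mapsto d_{2,\varepsilon}(\f_t^\varepsilon, \f)^2$ along the Mabuchi geodesic $\f_t^\varepsilon$ joining $\f_0$ to $\f_1$, combined with approximation by smooth $\varepsilon$-geodesics -- yields
$$
d_{2,\varepsilon}(\f_{1/2}^\varepsilon, \f)^2 \le \tfrac{1}{2} d_{2,\varepsilon}(\f, \f_0)^2 + \tfrac{1}{2} d_{2,\varepsilon}(\f, \f_1)^2 - \tfrac{1}{4} d_{2,\varepsilon}(\f_0, \f_1)^2.
$$
One then sends $\varepsilon \to 0$. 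By Proposition \ref{cor:approx} the midpoint $\f_{1/2}^\varepsilon$ decreases to $m = \f_{1/2}$, while Theorem \ref{thm:approx0} delivers convergence of $d_{2,\varepsilon}(\f_0, \f_1)$, $d_{2,\varepsilon}(\f, \f_0)$ and $d_{2,\varepsilon}(\f, \f_1)$ to their $d_2$ counterparts. For the first term $d_{2,\varepsilon}(\f_{1/2}^\varepsilon, \f)$ one invokes Definition \ref{def: bd} (tailored precisely to handle a decreasing $\varepsilon$-family) in conjunction with Proposition \ref{prop:approx2} and the $I_2$--$d_2$ equivalence of Theorem \ref{thm:equiv}. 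Taking $\liminf_{\varepsilon \to 0}$ yields the semi-parallelogram inequality on $\mathcal{H}_\omega$.

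The final step is density. By Corollary \ref{thm: completion}, which uses precisely the Hodge assumption, $\mathcal{H}_\omega$ is $d_2$-dense in $\mathcal{E}^2(X, \omega)$. Given arbitrary $\f_0, \f_1, \f \in \mathcal{E}^2(X,\omega)$, choose approximants in $\mathcal{H}_\omega$ converging in $d_2$. The midpoint of the finite energy Mabuchi geodesic depends $d_2$-continuously on the endpoints: this follows from the construction of finite energy geodesics as monotone pointwise limits (Proposition \ref{prop:gedisically closed}) combined with the triangle inequality. Passing to the limit in the semi-parallelogram inequality concludes the proof.

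The main obstacle I anticipate is the passage to the limit in $d_{2,\varepsilon}(\f_{1/2}^\varepsilon, \f)$, where the two arguments vary simultaneously so that Theorem \ref{thm:approx0} does not apply directly. Clean handling requires exploiting the monotonicity $\f_{1/2}^\varepsilon \searrow \f_{1/2}$ together with the reverse-monotone approximation used to define $d_p$ on bounded potentials (Definition \ref{def: bd}), and controlling the residual via Proposition \ref{prop:upperbound}. An analogous continuity issue reappears in the density step and is resolved by the same mechanism.
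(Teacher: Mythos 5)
Your proposal is correct and follows essentially the same route as the paper's proof: the Bruhat--Tits CN inequality for $d_{2,\varepsilon}$ via Calabi--Chen/Darvas, the limit $\varepsilon\to 0$ using the monotonicity of $\varepsilon$-geodesics together with the $I_2$--$d_2$ comparison of Theorem \ref{thm:equiv} and Proposition \ref{prop:approx2}, and then density of $\H_\omega$ in $\E^2(X,\omega)$ under the Hodge assumption. The only point to make explicit in your final step is that the approximants should be taken \emph{decreasing} (as Corollary \ref{thm: completion} allows), so that by the comparison principle the midpoints also decrease and their $d_2$-convergence follows from continuity of $d_2$ along decreasing sequences (Proposition \ref{properties $D_p$}), rather than from a general continuous dependence of midpoints on $d_2$-convergent endpoints, which is not available a priori.
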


Complete CAT(0) spaces are also called Hadamard spaces. Recall that a CAT(0) space is a geodesic space which has non positive curvature in the sense of Alexandrov. Hadamard spaces enjoy many interesting properties (uniqueness of geodesics, contractibility, convexity properties,...see \cite{BH99}).

\begin{proof}
By Corollary \ref{thm: completion} we know that $(\E^2(X,\omega),d_2)$ is the completion of $(\H_\omega, d_2)$ and by Proposition \ref{prop:gedisically closed} it is a geodesic metric space. \cite[Exercise 1.9.1.c (p. 163)]{BH99} insures that $(\E^2(X,\omega),d_2)$ is a  CAT(0) space if and only if the CN inequality of Bruhat-Tits \cite{BT72} holds, i.e. $\forall P,Q,R\in \E^2(X,\omega)$ and for any $M\in \E^2(X,\omega)$ such that $d_2(Q,M)=d_2(R, M)=d_2(Q,R)/2$ (in other words $M=\varphi^{QR}_t|_{t=1/2}$ where $\varphi_t^{QR}$ is the geodesic joining $Q, R$) one has
\begin{equation}\label{CN}
d_2(P, M)^2 \leq \frac{1}{2} d_2(P,Q)^2 + \frac{1}{2} d_2(P,R)^2-\frac{1}{4} d_2(Q,R)^2.
\end{equation}

Assume first that $P,Q, R\in \H_\omega \subset \H_{\omega_\e}$. Then by \cite[Section 2.4]{CC02} (see also \cite[Lemma 6.12]{Dar14}) we have that

$$d_{2, \e}(P,M_\e)^2 \leq \frac{1}{2} d_{2, \e}(P,Q)^2 + \frac{1}{2} d_{2, \e}(P,R)^2-\frac{1}{4} d_{2, \e}(Q,R)^2$$
where $M_\e$ is the point of $\e$-geodesic joining $Q,R$ such that $d_{2, \e}(Q,M)=d_{2,\e}(R, M)=d_{2, \e} (Q,R)/2$. Thanks to Theorem \ref{thm:approx0} the RHS in the inequality above converges to the RHS of \eqref{CN} as $\e$ goes to zero. We claim that $d_{2, \e}(P,M_\e)$ converges to $d(P, M)$. Observe first that $M_\e$ decreases to $M$ since $\e$-geodesics decreases as $\e$ decrease to zero (Proposition \ref{cor:approx}). Moreover, the triangle inequality yields $|d_{2, \e}(P,M_\e)- d_{2, \e}(P,M)| \leq d_{2, \e}(M,M_\e)$. Since $M, M_\e$ are both bounded, it follows from Theorem \ref{thm:equiv} and Proposition \ref{prop:approx2} that $d_{2, \e'}(M,M_\e)\rightarrow 0$ as $\e\rightarrow 0$. This proves the claim.\\
If $P,Q,R\in \E^2(X, \omega)$ we choose smooth approximants $P_k, Q_k, R_k \in \H_\omega$ decreasing to $P, Q, R$. The above arguments insure that
\begin{equation}\label{CN approx}
d_2(P_k, M_k)^2 \leq \frac{1}{2} d_2(P_k,Q_k)^2 + \frac{1}{2} d_2(P_k,R_k)^2-\frac{1}{4} d_2(Q_k,R_k)^2.
\end{equation}
The comparison principle implies that $M_k$ decreases to $M$ as $k$ goes to $+\infty$. It then follows from Proposition \ref{prop:upperbound} and Proposition \ref{properties $D_p$} that $d_2(M, M_k)\rightarrow 0$ as $k$ goes to $+\infty$. This together with Proposition \ref{properties $D_p$} gives \eqref{CN} when letting $k\rightarrow +\infty$. 
%
\end{proof}


\section{Singular K\"ahler-Einstein metrics of positive curvature} \label{sec:csck}

The existence of singular K\"ahler-Einstein metrics of non-positive curvature has been established in \cite{EGZ09}, generalizing the
fundamental work of Aubin \cite{Aub78} and Yau \cite{Yau78}. They always exist, provided
the underlying variety has mild singularities and the first Chern class is non-positive.

Singular K\"ahler-Einstein metrics of positive curvature are more difficult
to construct. It is already so in the smooth case \cite{CDS3}. 
Their first properties have been obtained in \cite{BBGZ,BBEGZ}.
In Section \ref{sec:critanal}, pushing further these works, we provide a necessary and sufficient analytic  condition for their existence, generalizing
a result of Tian \cite{Tian97} and Phong-Song-Sturm-Weinkove \cite{PSSW08}.

 \subsection{Log terminal singularities}\label{sec:klt}
A \emph{pair} $(Y,D)$ is the data of a connected normal compact complex variety $Y$ and an effective $\Q$-divisor $D$ such that $K_Y+D$ is $\Q$-Cartier. 
We write 
$$
Y_0:=Y_{{\rm{reg}}}\setminus{\rm{Supp }} D.
$$ 
Given a log resolution $\pi:X\to Y$ of $(Y,D)$ (which may  be chosen to be an isomorphism over $Y_0$), 
there exists a unique $\Q$-divisor $\sum_i a_i E_i$ whose push-forward to $Y$ is $-D$ and such that 
$$
K_{X}=\pi^*(K_Y+D)+\sum_i a_i E_i. 
$$

\begin{defi}
The pair $(Y,D)$ is \emph{klt}  if $a_j>-1$ for all $j$. 
\end{defi}

The same condition will then hold for all log resolutions of $Y$. When $D=0$, one 
says that $Y$ is \emph{log terminal} when the pair $(Y,0)$ is klt.
We have the following analytic interpretation. Fix $r \in \N^*$  such that $r(K_Y+D)$ is Cartier. 
If $\sigma$ is a nowhere vanishing section of the corresponding line bundle over a small open set $U$ of $Y$ then 
\begin{equation}\label{equ:adaptedloc}
\left(i^{r n^2}\sigma\wedge\bar\sigma\right)^{1/r}
\end{equation}
defines a smooth, positive volume form on $U_0:=U\cap Y_0$. If $f_j$ is a local equation of $E_j$ around a point of $\pi^{-1}(U)$, 
then
$$
\pi^*\left(i^{r n^2}\sigma\wedge\bar\sigma\right)^{1/r}=\prod_i|f_i|^{2a_i}dV
$$
locally on $\pi^{-1}(U)$ for some local volume form $dV$. Since $\sum_i E_i$ has normal crossings, 
this shows that $(Y,D)$ is klt iff each volume form of the form (\ref{equ:adaptedloc}) has locally finite mass near singular points of $Y$.

The previous construction globalizes as follows: 

\begin{defi}\label{defi:adapted} 
Let $(Y,D)$ be a pair and let $\phi$ be a smooth Hermitian metric on the $\Q$-line bundle $-(K_Y+D)$. 
The corresponding \emph{adapted measure} ${\rm mes}_\phi$ on $Y_{\rm{reg}}$ is locally defined by choosing 
a nowhere zero section $\sigma$ of $r(K_Y+D)$ over a small open set $U$ and setting
$$
{\rm mes}_\phi:=\left(i^{r n^2}\sigma\wedge\overline{\sigma}\right)^{1/r}/|\sigma|_{r\phi}^{2/r}.
$$
\end{defi}

The point is that the measure ${\rm mes}_\phi$ does not depend on the choice of $\sigma$, hence is globally defined. 
The above discussion shows that 
$$
(Y,D)
\text{ is klt }
\Longleftrightarrow
{\rm mes}_\phi
\text{ has finite total mass on } Y,
$$
 in which case we view it as a Radon measure on the whole of $Y$.

\subsection{K\"ahler-Einstein metrics on log Fano pairs}\label{sec:KE}

\begin{defi}\label{defi:logfano} 
A \emph{log Fano pair} is a klt pair $(Y,D)$ such that $Y$ is projective and $-(K_Y+D)$ is ample. 
\end{defi}

Let $(Y,D)$ be a log Fano pair. Fix a reference smooth strictly psh metric $\phi_0$ on $-(K_Y+D)$, with curvature $\om_0$ and adapted measure $\mu_0={\rm mes}_{\phi_0}$. 
We normalize $\phi_0$ so that $\mu_0$ is a probability measure. The volume of $(Y,D)$ is 
$$
V:=c_1(Y,D)^n=\int_X\om_0^n.
$$

\begin{defi}\label{defi:KE} 
A \emph{K\"ahler-Einstein metric} $T$ for the log Fano pair $(Y,D)$ is a finite energy current
$T \in   c_1(Y,D)$ such that
$
  T^n=V \cdot \mu_T.
$
\end{defi}

We now list some important properties of these objects established in \cite{BBGZ,Bern13,BBEGZ}:
 \begin{itemize}
\item  A K\"ahler-Einstein metric $\om$ is automatically smooth on $Y_0$, with continuous potentials on $Y$, and it satisfies
$$
\Ric(\om_{KE})=\om_{KE}+[D]
\text{ on } Y_{\rm{reg}}.
$$
\item  The definition of a log Fano pair requires the singularities to be klt. This condition is in fact necessary to obtain K-E metrics on $Y_{\rm{reg}}$.
\item  The K\"ahler-Einstein equation reads
$
(\om_0+dd^c\phi)^n=e^{-\phi+c}\mu_0
$
for some constant $c\in\R$. If we choose a log resolution, the equation becomes 
$({\om}+dd^c \f)^n=e^{-\f+c} \widetilde{\mu}_0$, where $\om=\pi^*\om_0$ is semipositive and big and $\widetilde{\mu}_0=\prod_i|f_i|^{2a_i}dV$. 
\item The potential  $\f$ belongs to $\H_\omega$ and maximizes the functional
$$
{\mathcal F}(\f) :=E(\f)+\log \left[ \int_{\tilde{X}} e^{-\f} d\widetilde{\mu}_0 \right].
$$
Conversely any maximizer of ${\mathcal F}$ is a K\"ahler-Einstein metric.
\item Two K\"ahler-Einstein metrics are connected by the flow of a holomorphic vector field that leaves $D$ invariant.
\item If the functional ${\mathcal F}$ is {\it proper} 
(i.e. if $E(\f_j) \rightarrow -\infty \Rightarrow {\mathcal F}(\f_j) \rightarrow -\infty$), 
then there exists a unique K\"ahler-Einstein metric.
 \end{itemize}

Here $[D]$ is the integration current on $D|_{Y_{\rm{reg}}}$. Writing $\Ric(\om_{KE})$ on $Y_{\rm{reg}}$ implicitely means that the positive measure $\om_{KE}^n|_{Y_{\rm{reg}}}$ corresponds to a singular metric on $-K_{Y_{\rm{reg}}}$, whose curvature is then $\Ric(\om_{KE})$ by definition.

\subsection{The analytic criterion} \label{sec:critanal}

Following and idea of Darvas-Rubinstein \cite{DR15}, 
we now extend \cite[Theorem 1.6]{Tian97} and \cite{PSSW08}
by proving the following:
\begin{thm}
Let $(Y,D)$ be a log Fano pair. It admits a unique K\"ahler-Einstein metric iff 
there exists $\e,M>0$ such that for all $\f \in \H_{norm}$,
$$
{\mathcal F}(\f) \leq -\e d_1(0,\f)+M.
$$
\end{thm}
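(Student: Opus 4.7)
The plan is to reduce the coercivity estimate for $d_1$ to the classical analytic properness of $\mathcal{F}$ in terms of the Aubin--Mabuchi energy $E$, for which the singular case is already handled in \cite{BBEGZ}. The key elementary observation is that every $\f\in\H_{norm}$ satisfies $\sup_X\f=0$, whence $\f\le 0$ pointwise, and therefore $\f\vee 0 = \f$. Applying Proposition \ref{lem:formuled1} to the pair $(0,\f)\in\H_{bd}^2$ (recall that $\H_\omega\subset \H_{bd}$, and the identity extends from $\H_{bd}$ to $\E^1$ by the monotone-limit procedure used to extend $d_1$ to $\E^1$ in Section \ref{sec:geod} together with continuity of $E$ along decreasing sequences), one obtains
\[
d_1(0,\f)\;=\;E(0)+E(\f)-2E(\f\vee 0)\;=\;-E(\f).
\]
Setting $L(\f):=\log\int_X e^{-\f}\,d\widetilde\mu_0$, the functional splits as $\mathcal{F}=E+L$, and the sought estimate $\mathcal{F}(\f)\le-\e\,d_1(0,\f)+M$ is equivalent to
\[
L(\f)\;\le\;(1-\e)\bigl(-E(\f)\bigr)+M,\qquad\f\in\H_{norm},
\]
i.e.\ to the standard $J$-properness of $\mathcal{F}$ (since $J(\f)\asymp -E(\f)$ on $\H_{norm}$).

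For the implication $(\Leftarrow)$ I would run a direct variational argument. Starting from a maximising sequence $(\f_j)\subset\H_{norm}$ for $\mathcal{F}$, coercivity bounds $d_1(0,\f_j)$ and hence $-E(\f_j)$ uniformly. The sublevel set $\{\f\in\E^1\colon \sup\f=0,\,E(\f)\ge-C\}$ is weakly compact in $\E^1$ (Hartogs plus \cite{BBGZ}), and $\mathcal{F}$ is upper semicontinuous on it, using upper semicontinuity of $E$ together with uniform integrability of $e^{-\f}$ against $\widetilde\mu_0$ along energy-bounded sequences established in \cite{BBEGZ}. Any weak cluster point $\f_\infty\in\E^1_{norm}$ therefore maximises $\mathcal{F}$ and, by the variational characterisation recalled in Section \ref{sec:KE}, yields a K\"ahler--Einstein current. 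Uniqueness is automatic from the coercivity itself: a second K\"ahler--Einstein current, combined with reductivity of the group of automorphisms of $(Y,D)$ fixing the K\"ahler--Einstein potential, would produce a one-parameter family of maximisers of $\mathcal{F}$ along which $d_1(0,\cdot)$ escapes to infinity while $\mathcal{F}$ remains constant, contradicting the assumed bound.

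For the implication $(\Rightarrow)$ I would invoke the converse statement in \cite{BBEGZ}: existence of a unique K\"ahler--Einstein metric on the log Fano pair $(Y,D)$ is equivalent to the analytic properness inequality $L(\f)\le(1-\e)(-E(\f))+M$ on $\H_{norm}$. Transporting this inequality through the identification $d_1(0,\f)=-E(\f)$ obtained above immediately gives the desired $d_1$-coercivity. This step follows the philosophy of \cite{DR15}: the intrinsic Mabuchi geometry developed in the previous sections provides a clean geometric repackaging of the classical analytic properness in a coordinate-free form valid on the singular space $Y$.

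The main obstacle is the direction (unique K\"ahler--Einstein implies properness). Already in the smooth Fano case this is Tian's deepest contribution in \cite{Tian97}, and the singular extension to log Fano pairs is one of the main results of \cite{BBEGZ}. The essential difficulty is uniform control of the exponential integral $\int_X e^{-\f}\,d\widetilde\mu_0$ in terms of $-E(\f)$ along sequences with $E(\f_j)\to-\infty$; this requires delicate handling of the adapted measure $\widetilde\mu_0$, whose densities blow up along exceptional divisors of the log resolution with klt exponents strictly greater than $-1$, and one must also exclude non-trivial continuous families of maximisers via the uniqueness hypothesis. Granting this difficult direction from \cite{BBEGZ}, the $d_1$-reformulation is essentially formal.
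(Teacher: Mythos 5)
Your reduction $d_1(0,\f)=-E(\f)$ for sup-normalized potentials (via Proposition \ref{lem:formuled1}, since $\f\le 0$ gives $\f\vee 0=\f$) is correct and is exactly what the paper uses, and your treatment of the direction ``coercivity $\Rightarrow$ unique K\"ahler--Einstein metric'' is in line with the paper, which simply quotes \cite[Theorems 4.8 and 5.4]{BBEGZ} for it. The genuine gap is in the converse direction. You propose to ``invoke the converse statement in \cite{BBEGZ}: existence of a unique K\"ahler--Einstein metric is equivalent to the analytic properness inequality''. No such converse is available in \cite{BBEGZ}: that paper provides properness $\Rightarrow$ existence, the generalization of Berndtsson's convexity of ${\mathcal F}$ along geodesics (Theorem 11.1), and strong convergence of maximizing sequences (Theorem 5.3), but \emph{not} the implication ``existence of a unique K\"ahler--Einstein metric $\Rightarrow$ properness/coercivity''. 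That implication is precisely the new content of the theorem you are asked to prove (it extends Tian and Phong--Song--Sturm--Weinkove to the singular setting, and was obtained independently by Darvas); citing it makes your argument circular, and your closing remark that ``granting this difficult direction from \cite{BBEGZ}, the $d_1$-reformulation is essentially formal'' concedes that the essential step is missing.

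For the record, the paper proves this direction without any Moser--Trudinger-type estimate, by the Darvas--Rubinstein device that your reformulation sets up but does not exploit: assuming the coercivity fails for every $\e$ (with $M={\mathcal F}(0)+1$, the base point being the K\"ahler--Einstein potential), one gets $\f_j\in\H_{norm}$ with ${\mathcal F}(\f_j)>-d_1(0,\f_j)/(j+1)+{\mathcal F}(0)+1$; if $E(\f_j)$ stays bounded one contradicts upper semicontinuity and the fact that $0$ maximizes ${\mathcal F}$, and otherwise one takes the unit-speed finite energy geodesic from $0$ to $\f_j$, sets $\p_j:=\f_{1,j}$, and uses concavity of $t\mapsto{\mathcal F}(\f_{t,j})$ to deduce ${\mathcal F}(\p_j)\to{\mathcal F}(0)$; then \cite[Theorem 5.3]{BBEGZ} forces $\p_j\to 0$ strongly, contradicting $d_1(0,\p_j)=1$. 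This is where the metric machinery of the paper (finite energy geodesics, $d_1(0,\cdot)=-E$, Proposition \ref{lem:formuled1}) genuinely enters, and it is the step your proposal leaves unproved. A minor additional remark: your uniqueness argument in the easy direction, appealing to reductivity of the automorphism group, is more than is needed — properness directly rules out a one-parameter family of maximizers coming from a holomorphic vector field preserving $D$, which is how the cited results handle it.
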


\noindent This is Theorem D of the introduction.

\begin{proof}
We are going to use Theorem B. Note that $\omega_Y\in c_1(-K_X-D)$ is a Hodge form. One implication is due to \cite[Theorems 4.8 and  5.4]{BBEGZ}: if 
$$
{\mathcal F}(\f) \leq -\e d_1(0,\f)+M, 
$$
then ${\mathcal F}$ is proper, hence there exists a unique K\"ahler-Einstein metric.

So we assume now that there exists $\omega$ a unique K\"ahler-Einstein metric, which we take
as our base point of $\HH_\omega$. It is the unique maximizer of ${\mathcal F}$ on $\E^1(X,\omega)$,
$$
{\mathcal F}(0)=\sup_{\f \in \E^1(X,\omega)} {\mathcal F}(\f),
$$
as follows from \cite[Theorem 6.6]{BBGZ}, \cite[Theorems 4.8 and 5.3]{BBEGZ}.

Note that ${\mathcal F}$ is invariant by translations, so we actually consider the restriction of 
${\mathcal F}$ on $\E_{norm}^1(X,\omega)=\{ \f \in \E^1(X,\omega), \; \sup_X \f=0\}$.
Assume for contradiction that there is no $\e>0$ such that 
${\mathcal F}(\f) \leq -\e d_1(0,\f)+M$ for all $\f \in \H_{norm}$,
where we set $M:={\mathcal F}(0)+1$.
Then we can find a sequence $(\f_j) \in \H_\omega^\N$ such that 
$\sup_X \f_j=0$ and 
$$
{\mathcal F}(\f_j) > - \frac{d_1(0,\f_j)}{j+1}+{\mathcal F}(0)+1.
$$

If $E(\f_j)$ does not blow up to $-\infty$, we  reach a contradiction:
up to extracting and relabelling, we can assume that $E(\f_j)$ is bounded
and $\f_j$ converges to some $\p \in \E^1(X,\omega)$.
Since ${\mathcal F}$ is upper semi-continuous, we infer
${\mathcal F}(\p) \geq {\mathcal F}(0)+1$, a contradiction.
 
So we assume now that $E(\f_j) \rightarrow -\infty$.
It follows from Lemma \ref{lem:formuled1} that $d_j:=d_1(0,\f_j)=-E(\f_j) \rightarrow +\infty$.
We let $(\f_{t,j})_{0 \leq t \leq d_j}$ denote the Mabuchi geodesic with unit speed joining $0$ to $\f_j$ 
and set $\p_j:=\f_{1,j}$. Note that the arguments in Lemma \ref{lem:maxprince} show that $t \mapsto \f_{t,j}$ is decreasing, hence
$
\f_j \leq \p_j \leq 0.
$
In particular $\sup_X \p_j=0$, while by definition $d_1(0,\p_j)=1=-E(\p_j)$.

It follows now from Berndtsson's convexity result \cite[Section 6.2]{Bern13}
and its generalization to the singular context \cite[Theorem 11.1]{BBEGZ}
that the map $t \mapsto {\mathcal F}(\f_{t,j})$ is concave. We infer
$$
0 \geq {\mathcal F}(\f_{1,j})-{\mathcal F}(\f_{0,j}) \geq \frac{{\mathcal F}(\f_{d_j,j})-{\mathcal F}(\f_{0,j})}{d_j}
>-\frac{1}{j+1}+\frac{1}{d_j},
$$
thus ${\mathcal F}(\p_j) \rightarrow {\mathcal F}(0)$.
This shows that $(\p_j)$ is a maximizing sequence for ${\mathcal F}$ which therefore strongly converges to $0$,
by \cite[Theorem 5.3.3]{BBEGZ}. This yields a contradiction since $d_1(0,\p_j)=1$.
\end{proof}

\begin{ackn}
We thank  T.Darvas, H.C.Lu and A.Zeriahi for useful conversations.
The first named author is supported by a Marie Sklodowska Curie individual fellowship 660940–KRF–CY (MSCA–IF). 
This article is based upon work supported also by the NSF Grant  DMS-1440140,
 while the first author was in residence at the MSRI,  during the Spring 2016 semester.
It is partially based on lecture notes of the second author \cite{G14}, after
series of lectures he gave at KIAS in April 2013. 
The authors thank J.-M.Hwang and M.Paun for their invitation and the staff of KIAS and MSRI
for providing excellent conditions of work.
\end{ackn}

\end{document}